\pdfoutput=1
\documentclass[11pt]{amsart}
\usepackage{amsmath,amssymb,amsthm}
\usepackage{mathtools}
\usepackage[margin=1.2in]{geometry}
\usepackage{booktabs}
\usepackage{array}
\usepackage{flafter}
\usepackage{xcolor}
\usepackage[colorlinks=true,linkcolor=blue,citecolor=blue,urlcolor=blue]{hyperref}
\hypersetup{pdfauthor={Bernd Johannes Wuebben},
  pdftitle={A vanishing-cycle obstruction to smoothing Calabi-Yau threefolds},
  pdfsubject={Global smoothing obstructions for singular Calabi-Yau threefolds},
  pdfkeywords={Calabi-Yau threefolds, smoothing of singularities, vanishing cycles,
  del Pezzo cones, ordinary double points, Milnor fibers, reflexive polytopes}}

\newtheorem{theorem}{Theorem}[section]
\newtheorem{proposition}[theorem]{Proposition}
\newtheorem{lemma}[theorem]{Lemma}
\newtheorem{corollary}[theorem]{Corollary}

\theoremstyle{definition}
\newtheorem{definition}[theorem]{Definition}
\newtheorem{example}[theorem]{Example}

\newtheorem{convention}[theorem]{Convention}
\theoremstyle{remark}
\newtheorem{remark}[theorem]{Remark}

\theoremstyle{plain}
\newtheorem{thmletter}{Theorem}

\newcommand{\ZZ}{\mathbb{Z}}
\newcommand{\QQ}{\mathbb{Q}}
\newcommand{\RR}{\mathbb{R}}
\newcommand{\CC}{\mathbb{C}}
\newcommand{\PP}{\mathbb{P}}
\newcommand{\cO}{\mathcal{O}}
\newcommand{\Rsm}{\mathcal{R}}   
\newcommand{\Mil}{\mathcal{M}}  
\newcommand{\cX}{\mathcal{X}}
\newcommand{\conv}{\operatorname{conv}}
\newcommand{\Cone}{\operatorname{Cone}}
\newcommand{\Sing}{\operatorname{Sing}}
\newcommand{\Pic}{\operatorname{Pic}}
\newcommand{\Proj}{\operatorname{Proj}}
\newcommand{\AAf}{\mathbb{A}}
\newcommand{\TV}{\operatorname{TV}}
\newcommand{\Cl}{\operatorname{Cl}}
\newcommand{\Def}{\operatorname{Def}}
\newcommand{\rk}{\operatorname{rank}}
\newcommand{\Bl}{\operatorname{Bl}}
\newcommand{\dP}{\mathrm{dP}}
\newcommand{\GL}{\mathrm{GL}}
\newcommand{\Xh}{\widehat{X}}
\newcommand{\Sb}{\bar{S}}
\newcommand{\Disk}{\mathbb{D}}

\title[A vanishing-cycle smoothing obstruction]{A vanishing-cycle
obstruction to smoothing\\ Calabi--Yau threefolds}
\author{Bernd Johannes Wuebben}

\date{27 August 2026}
\subjclass[2020]{14J32 (primary); 14B07, 14M25, 32S30, 32S55 (secondary)}
\keywords{Calabi--Yau threefolds, smoothing of singularities, vanishing
cycles, del Pezzo cones, ordinary double points, Milnor fibers,
reflexive polytopes}

\begin{document}

\begin{abstract}
We prove a homological obstruction to smoothing projective Calabi--Yau
threefolds with nodes and anticanonical cones over smooth del Pezzo surfaces
of degrees six and seven. On a crepant resolution that is small over the
nodes and blows up the cone vertices, any smoothing forces a relation among
link classes bounding in the local Milnor fibers. The relation is nonzero
at every node and degree-seven point, and at each degree-six point smoothed
through the one-dimensional component. The proof uses vanishing periods
and applies to smoothing arcs of arbitrary contact order, extending the
necessity direction of Friedman's criterion. We apply the obstruction to a
Batyrev hypersurface whose thirty singularities are individually smoothable
but which admits no smoothing, even formally. On its mirror partner, two
nodes cannot be smoothed by any analytic deformation, independently of a
locally nonsmoothable cone point.
\end{abstract}

\maketitle

\section{Introduction}\label{sec:intro}

\subsection{The problem}
Let $X$ be a projective threefold over $\CC$ with trivial dualizing
sheaf and isolated singularities.  When every singular germ of $X$
admits a smoothing, one may ask whether $X$ itself does.  For ordinary
double points the answer is classical: by Friedman's
criterion~\cite{Friedman86}, simultaneous smoothing is governed by a
relation among the exceptional curves of a small resolution in which
every curve appears with nonzero coefficient; see also
Rollenske--Thomas~\cite{RT09} for a topological formulation in all odd
dimensions.  Outside the nodal case much less is known.  The local
deformation theory of the simplest non-nodal germs, the
anticanonical cones over del Pezzo surfaces, has been understood
since Altmann~\cite{Altmann95,Altmann97}, and their smoothing
components have explicit geometric descriptions
(Gross~\cite[\S5.5]{Gross97}; Petracci~\cite{Petracci20,Petracci19}).
Globally, Namikawa proved that a $\QQ$-factorial Calabi--Yau threefold with
isolated toric Gorenstein singularities is smoothable whenever its germs are,
and that for a contracted del Pezzo surface of degree~$6$ the restriction of
divisors to the surface determines which smoothing components of the germ are
reached by global deformations \cite{Namikawa97}.  We prove a necessary
condition for actual smoothings that couples nodal and divisorial link
classes on threefolds with nodes and del Pezzo cone points, without
assuming $\QQ$-factoriality or unobstructedness.

The first three papers of this program~\cite{PaperI,PaperII,PaperIII}
study the isolated Gorenstein toric threefold germs that arise on Batyrev
hypersurfaces from unit-edge polygons.  They use a local Minkowski-sum
obstruction to smoothing and classify the germs with small edge
vectors~\cite{PaperI}, and, for faces whose germs are smoothable and when a
dual-edge-length invariant is at least two, construct deformations that
smooth the corresponding points, which give global
smoothings under further hypotheses~\cite{PaperII}.  They also identify a
Batyrev mirror pair, associated with a $22$-vertex reflexive $4$-polytope
$\Delta_{\mathbb F_1}$ and its polar, for which both members have only isolated
singularities.  The remaining smoothing question is \emph{mixed}: nodes and
del Pezzo cone points occur on the same threefold, so any smoothing must move
both kinds at once.  One member carries a cone singularity with no smoothing
and is locally obstructed.  The other, denoted $X^{\circ}$ throughout, has
$26$ ordinary double points, two
anticanonical cones over the del Pezzo surface of degree~$7$, and two
over the del Pezzo surface of degree~$6$: thirty germs, every one of
them locally smoothable.  A purely nodal analysis cannot decide
$X^{\circ}$: two of its twenty-six nodes are \emph{coloops} of the relation
matrix, meaning that their coordinate vanishes on every relation among the
classes of the exceptional curves of a small resolution, so no nodal relation is
nonzero there and any smoothing relation would have to include classes
from the exceptional divisors.

\subsection{Results}
Fix the following setting.  $X$ is a connected compact complex threefold with
$\omega_X\cong\cO_X$ and $h^1(\cO_X)=0$ whose singular points are
ordinary double points and anticanonical cones over del Pezzo surfaces
of degrees~$6$ and~$7$; $\pi\colon\Xh\to X$ is a crepant resolution,
small over the nodes, and near each cone point $p$ isomorphic to the
blowup of the vertex, so that $\pi^{-1}(p)$ is a single exceptional
surface $E_p$, a copy of the del Pezzo surface of the cone.  For a singular point $p$ let $L_p$ denote
its link.  A \emph{one-parameter deformation} of $X$ is a flat proper
analytic family $\cX\to\Disk$ over a disk $\Disk$ with
$\cX_0\cong X$, and it is a \emph{one-parameter smoothing} if $X_s$ is
smooth for all $s\neq0$.  Such a smoothing induces at each singular
point a smoothing of the local germ, with Milnor fiber $\Mil_p$, and we
set
\[
\Rsm_p\ :=\ \ker\bigl(H_2(L_p;\QQ)\longrightarrow H_2(\Mil_p;\QQ)\bigr),
\]
the part of the link homology that bounds in the local Milnor fiber.  By
Altmann's computation, recalled in Section~\ref{sec:miniversal}, the miniversal
base of a node or of a degree-$7$ cone germ has one reduced component, and that
of a degree-$6$ cone germ has two, of dimensions $1$ and $2$; a smoothing sends
the germ into one of them, and $\Rsm_p$ depends on which.

\begin{thmletter}[mixed necessity; $=$ Theorem~\ref{thm:necessity}]\label{thm:necessity-intro}
Suppose $X$ as above admits a one-parameter smoothing.  Then the space
\[
K(X)\ :=\ \Bigl\{\kappa\in\ker\Bigl(\bigoplus_pH_2(L_p;\QQ)
\to H_2(\Xh;\QQ)\Bigr)\ :\ \kappa_p\in \Rsm_p
\text{ at every cone point }p\Bigr\}
\]
contains, for every node $q$, for every degree-$7$ cone point $q$, and
for every degree-$6$ cone point $q$ whose induced local smoothing lies
in the one-dimensional component of its miniversal base, a class
$\kappa$ with $\kappa_q\neq0$.  Here the maps are induced by the
inclusions $L_p\subset X\smallsetminus\Sing X\subset\Xh$.
\end{thmletter}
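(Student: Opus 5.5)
The plan follows Friedman's nodal argument, with vanishing periods replacing the first-order computation and with Milnor fibers at the cone points. Fix a smoothing $\cX\to\Disk$ and a general fiber $X_s$. Choose small conical neighbourhoods $U_p$ of the singular points and set $X^0:=X\smallsetminus\bigcup_pU_p$. Then $X_s$ is obtained from a copy of $X^0$ (which deforms trivially over a small disk) by gluing in the Milnor fibers $\Mil_p$ along the links $L_p$. The same $X^0$ sits in $\Xh$, with the neighbourhoods $\pi^{-1}(U_p)$ glued in. These are tubular neighbourhoods of the exceptional curves at nodes and of $E_p$ at cone points. Both pictures share $\partial X^0=\bigsqcup_pL_p$. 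The obstruction will come from comparing the two Mayer--Vietoris sequences, together with a period argument showing that the vanishing classes cannot all be zero.

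\textbf{Step 1 (topology).} I would use the Mayer--Vietoris sequence for $X_s=X^0\cup\bigsqcup\Mil_p$. A class $\kappa\in\bigoplus_pH_2(L_p;\QQ)$ with $\kappa_p\in\Rsm_p$ for all $p$ maps to zero in $\bigoplus H_2(\Mil_p)$. Hence it lies in $K(X)$ precisely when it also dies in $H_2(X^0)$, and therefore in $H_2(\Xh)$. At a node, $H_2(\Mil_q)=0$, so $\Rsm_q=H_2(L_q)\cong\QQ$ is automatic; this is why the condition in $K(X)$ is imposed only at cone points. Next, the boundary map $H_3(X_s)\to\bigoplus_pH_2(L_p)$ has image exactly $\{\kappa:\kappa_p\in\Rsm_p,\ \kappa\mapsto0\in H_2(X^0)\}$. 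Concretely, a 3-cycle $\Gamma$ on $X_s$ meets each $\Mil_p$ in a chain $\Gamma_p$ with $\partial\Gamma_p=\kappa_p$. It remains to check that $H_2(X^0)\to H_2(\Xh)$ is injective on the image of $\bigoplus H_2(L_p)$. If it is not, I would replace $H_2(X^0)$ by $H_2(\Xh)$ using the second Mayer--Vietoris sequence: the kernel of $\bigoplus H_2(L_p)\to H_2(X^0)$ is contained in $K(X)$ in any case, and that containment is all we need. So it suffices to produce $\Gamma\in H_3(X_s;\QQ)$ whose boundary component at $q$ is nonzero.

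\textbf{Step 2 (vanishing periods).} Let $\Omega_s$ be a relative holomorphic volume form on $\cX$. It exists because $\omega_X\cong\cO_X$ and $h^1(\cO_X)=0$, after shrinking $\Disk$. At each $q$ I would take the local vanishing cycle $\delta_q\subset\Mil_q$ together with its period $\int_{\delta_q}\Omega_s$. At a node, $\delta_q$ is the Lagrangian $S^3$ and its period is $\sim c\,s^{m/2}$-type nonzero for any contact order $m$. At a degree-7 cone point, and at a degree-6 point on the one-dimensional component, I would use Altmann's description (Section~\ref{sec:miniversal}): the Milnor fiber there has $H_3(\Mil_q)\neq0$, or more precisely a relative class $\delta_q\in H_3(\Mil_q,L_q)$ with $\partial\delta_q\neq0$ in $H_2(L_q)$, and its period is a nonvanishing power of $s$. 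The two-dimensional component of a degree-6 germ has no such class, which is why it is excluded from the statement.

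By Poincaré--Lefschetz duality on $X_s$, the local class $\delta_q$ pairs against global cycles. Suppose every $\Gamma\in H_3(X_s)$ had $\partial\Gamma$ vanishing at $q$. Then the local vanishing class would be globally homologous into $X^0$. Its period is then computed on the trivial family over $X^0$, where it stays bounded and in fact tends to the period of a cycle on $X^0\subset\Xh$. One also needs it to tend to $0$; this holds because the class is a pure $\partial$-type class. Combining this with Griffiths transversality and the local asymptotics would give a contradiction.

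\textbf{The main obstacle} is making this period contradiction rigorous for arbitrary contact order, at non-nodal points, and without unobstructedness. Friedman's first-order argument is unavailable here, so I would argue entirely with $\Omega_s$ and the Picard--Lefschetz/monodromy decomposition. The idea is to write $H_3(X_s)$ as the invariant part plus vanishing part. A nonzero local vanishing period then forces a global cycle with nonzero intersection with $\delta_q$, and that intersection is exactly the $q$-component of its boundary in $H_2(L_q)$, via the linking pairing $H_2(L_q)\times H_2(L_q)$ or $H_2(L_q)\times H_1$ at cone points. I would first try the Clemens-type argument: the class dual to $\delta_q$ has unbounded or fractionally-growing period only if it meets $\delta_q$.
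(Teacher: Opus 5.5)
\textbf{Verdict: there is a genuine gap, and it is in Step~2.}

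Your Step~1 is correct and matches the paper's membership lemma. Mayer--Vietoris for $X_s=X^0\cup\bigsqcup_p\Mil_p$ puts every $\partial\Gamma$ in $K(X)$, and only the containment in the kernel to $H_2(\Xh;\QQ)$ is needed. Step~2 carries the whole theorem, and it has two problems.

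\emph{The reduction is misstated.} By excision and Poincar\'e duality on the closed manifold $X_s$, the restriction $H_3(X_s;\QQ)\to H_3(\Mil_q,L_q;\QQ)$ is dual to the pushforward $H_3(\Mil_q;\QQ)\to H_3(X_s;\QQ)$. Moreover $\partial\colon H_3(\Mil_q,L_q)\to H_2(L_q)$ is injective. At a node this holds because $\Mil_q\simeq T^*S^3$. At the two rank-one cone branches, $H_3(\Mil_q)\to H_3(\Mil_q,L_q)$ is dual to a skew form on a line, hence zero. So ``$(\partial\Gamma)_q=0$ for every $\Gamma$'' means exactly that the \emph{absolute} local class $\nu$ is zero in $H_3(X_s;\QQ)$. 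Here $\nu$ is the vanishing $S^3$, respectively the generator of $H_3(\Mil_q;\QQ)\cong\QQ$. It does not mean that $\nu$ is homologous into $X^0$.

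The contradiction must then be pointwise in $s$. Stokes gives $\int_\nu\Omega_s=0$ exactly, while a local computation gives $\int_\nu\Omega_s\neq0$ for each small $s\neq0$. Your asymptotic version does not close:
\begin{itemize}
\item If $\nu$ were homologous to a cycle $C\subset X^0$, then $\int_C\Omega_s\to\int_C\Omega_0$, and nothing forces this limit to be nonzero. Comparing it with a local period that tends to $0$ gives no contradiction.
\item Griffiths transversality does not supply the missing nonvanishing.
\item The Clemens/Picard--Lefschetz growth argument needs a class dual to $\delta_q$. It therefore presupposes $\delta_q\neq0$, which is what you are trying to prove.
\item A relative class in $H_3(\Mil_q,L_q)$ has no well-defined period at all, so the argument must use the absolute class.
\end{itemize}

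\emph{The local nonvanishing is asserted, not proved, and at the cone points it is the decisive input.}

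At a node you need $\Omega=g\cdot\GL$ near $p$ with $g(p,0)\neq0$. This uses that $\Omega$ restricts to a generator of $\omega_X$, which in turn needs $\omega_{X_s}\cong\cO_{X_s}$ and base change for the direct image of $\omega_{\cX/\Disk}$. The scaling of the vanishing sphere then makes the period exactly (local smoothing parameter)$\cdot$(unit). It is linear in that parameter, so of order $s^m$, not $s^{m/2}$.

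At a $\dP_7$ point, or a $\dP_6$ point on the line branch, nothing formal prevents $[\Omega_\Mil]=0$ in $H^3(\Mil;\CC)$. The paper handles this in three steps:
\begin{itemize}
\item It identifies $\Mil_q$ with the sweep complement $V\smallsetminus\Sb$, where $V=\Bl_{pt}\PP^3$ or $(\PP^1)^3$, and writes the volume form torically.
\item It builds a $3$-cycle by hand on which the period is $\pm4\pi^2$.
\item It transfers this to $\Omega_s$ using the $\CC^*$-homogeneity of Pinkham's swept family and the uniform convergence $g\to g(p,0)$ on the contracted cycle.
\end{itemize}

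Your stated reason for excluding the two-dimensional $\dP_6$ component is also wrong. $W_6\smallsetminus\Sb$ has $b_3=2$, and $\Rsm_q=A_2$ has rank two, so such classes do exist. What is missing there is the period computation, together with a uniform rather than class-by-class injectivity argument in rank two.
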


The subspaces $\Rsm_p$ are computed in closed form
(Proposition~\ref{prop:kernels}): at a degree-$7$ point $\Rsm_p$ is the
unique line spanned by a $(-2)$-class of
$K^{\perp}\subset H_2(E_p;\QQ)$, and at a degree-$6$ point it is the
$A_1$ or the $A_2$ summand of the root lattice
$K^{\perp}\cong A_1\oplus A_2$, according to the component of the local
miniversal base through which the family smooths the germ.  These
subspaces coincide with the lattices $\Xi(V)$ attached by
Kuznetsov--Prokhorov~\cite[Theorem~1.9]{KP23} to the del Pezzo
threefolds $V$ that sweep out the local smoothings.

Theorem~\ref{thm:necessity-intro} is a finite linear-algebra test on any
concrete $X$.  It is finite although $\Rsm_p$ refers to a smoothing whose
existence is in question, because the degree-$6$ points contribute only finitely
many choices of component: one runs the test on each, as
Section~\ref{sec:thetest} does.  On the distinguished hypersurface every choice
obstructs.

\begin{thmletter}[$=$ Theorem~\ref{thm:main} and Corollary~\ref{cor:def}]\label{thm:xcirc-intro}
Let $\Delta_{\mathbb F_1}$ be the $22$-vertex reflexive polytope printed in
Table~\ref{tab:partnerrays}, with polar $\Delta_{\mathbb F_1}^{\vee}$ printed
in Table~\ref{tab:rays}.  Let $X^{\circ}$ be an anticanonical hypersurface,
regular in Batyrev's sense \cite[Def.~3.1.1]{Batyrev94}, in the Gorenstein
Fano toric fourfold whose fan is the face fan of
$\Delta_{\mathbb F_1}^{\vee}$, equivalently whose anticanonical polytope is
$\Delta_{\mathbb F_1}$.  Then $X^{\circ}$ admits no one-parameter
smoothing.  Moreover no fiber of a small representative of the
miniversal deformation of $X^{\circ}$ is smooth, and no formal
arc in the miniversal deformation of $X^{\circ}$ meets all thirty
local smoothing loci, in the sense of Corollary~\ref{cor:def}.
\end{thmletter}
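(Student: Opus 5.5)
The proof will apply Theorem~\ref{thm:necessity-intro} to $X^{\circ}$ and show that its conclusion fails for every admissible choice of components at the two degree-$6$ points. The first step is to set up the combinatorial data. From the face fan of $\Delta_{\mathbb F_1}^{\vee}$ (Table~\ref{tab:rays}) I will read off the singular points of the ambient toric fourfold that a regular hypersurface meets. These give $26$ nodes from edges of lattice length two and the four cone points from the degree-$7$ and degree-$6$ hexagonal/heptagonal cones. I will then fix a crepant resolution $\Xh$ that is small over the nodes and blows up the vertices; it comes from a suitable refinement of the fan restricted to the hypersurface.

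Next I will compute the map $\bigoplus_p H_2(L_p;\QQ)\to H_2(\Xh;\QQ)$ explicitly. For a node, $H_2(L_q)\cong\QQ$ maps to the class of the exceptional $\PP^1$. For a cone point, $H_2(L_p;\QQ)\cong K^{\perp}\subset H_2(E_p;\QQ)$ maps through $H_2(E_p)\to H_2(\Xh)$. By Lefschetz-type results for Batyrev hypersurfaces, $H_2(\Xh;\QQ)$ is dual to $H^2(\Xh;\QQ)$, which is spanned by restrictions of toric divisors together with any non-toric contributions. So the map becomes an explicit matrix of intersection numbers of toric divisors with the exceptional curves and with the $(-2)$-classes in the $E_p$.

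The third step imposes the constraints $\kappa_p\in\Rsm_p$ at the cone points. By Proposition~\ref{prop:kernels}, each degree-$7$ point contributes a fixed line. Each degree-$6$ point contributes either its $A_1$ or its $A_2$ summand, which gives four cases. In each case $K(X^{\circ})$ is the kernel of an explicit finite matrix. I then check that the coordinate at one of the two coloop nodes vanishes identically on $K(X^{\circ})$, or, when a degree-$6$ point smooths through the two-dimensional component, at some node or degree-$7$ point. Either way Theorem~\ref{thm:necessity-intro} is violated, so no one-parameter smoothing exists.

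For the miniversal statements I will argue as follows. A smooth fiber in a small representative of the miniversal family can be joined to the central fiber by an analytic arc, by the curve selection lemma applied to the complement of the discriminant. That arc is a one-parameter smoothing, which contradicts the previous step. For the formal statement, Corollary~\ref{cor:def} presumably says that the proof of Theorem~\ref{thm:necessity} only uses vanishing periods along the arc to finite order. Artin approximation then converts a formal arc meeting all thirty local smoothing loci into an analytic arc with the same local components to any order, and the same contradiction follows.

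The main obstacle I expect is the third step. We need a correct and verifiable description of $H_2(\Xh)$ and of the images of the $(-2)$-classes of $E_p$ in it, so that the coloop nodes genuinely stay isolated even after the divisorial classes are allowed. I would first try to compute the kernel via the dual statement: exhibit, for each of the four cases, a divisor class $D\in H^2(\Xh)$ that pairs nonzero with the chosen node but zero with every allowed divisorial class in $\Rsm_p$ and with all other relevant generators. Such a $D$ certifies directly that $\kappa_q=0$ on all of $K(X^{\circ})$.
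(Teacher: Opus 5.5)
Your architecture matches the paper's. You apply Theorem~\ref{thm:necessity} to $X^{\circ}$, show by a finite kernel computation that the coordinates of the coloop nodes $N_9,N_{11}$ vanish, and get the deformation-theoretic clauses from curve selection and Artin approximation. Your dual certificate is also exactly what the paper exhibits: summing the kernel equations of $B^{\mathsf T}$ over $D_1,\dots,D_{12},D_{17},\dots,D_{20}$ gives $u_9=\alpha_1-\alpha_2$ on all of $\ker(B^{\mathsf T})$, and a similar sum gives $u_{11}=\alpha_1-\alpha_2$. So the $\dP_7$ conditions $\alpha_1=\alpha_2=0$ alone force both coloop coordinates to vanish, for every $\dP_6$ profile. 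Your four-way split and your fallback are therefore unnecessary, although dropping either $\alpha_i=0$ does destroy the vanishing. Reading $\alpha_i=0$ in the coordinates of $\Xh$ needs Lemma~\ref{lem:boundary}, because Proposition~\ref{prop:kernels} computes $\Rsm_p$ in the sweep-side marking.

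Your Step~1 misplaces the singularities. The nodes do not come from edges of lattice length two. Every edge of $\Delta^{\vee}$ has lattice length one, and a length-two edge would give a curve of transversal $A_1$ singularities, not a node. The $26$ nodes come from the $26$ unit-square $2$-faces. The $\dP_7$ points come from the two \emph{pentagonal} $2$-faces with one interior point; there are no heptagons, since the fan of $\Bl_2\PP^2$ has five rays. The $\dP_6$ points come from the two hexagons. Each of these faces has dual edge of length one, so it contributes exactly one point, and regularity plus Weierstrass preparation makes that germ analytically the cone (Section~\ref{sec:X0}). The resolution must also refine the star subdivision at the interior points of these faces; otherwise $\pi^{-1}(p)$ is not a single copy of $S$.

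Your reason for the formal clause is the wrong one. The period argument needs actual fibers $X_s$, which a formal arc does not have, so ``periods to finite order'' plays no role. Moreover, an analytic arc agreeing with the formal one ``in the same local components'' need not smooth anything: at a node every arc lies in the unique component. The missing idea is that meeting a local smoothing locus is a finite-order condition. At each of the thirty points, the zeroth Fitting ideal of the relative critical locus is nonzero, of some $t$-adic order; let $k$ be the largest of these orders. Artin approximation gives an analytic arc agreeing with the formal one modulo $t^{k+1}$. Since Fitting ideals commute with base change, its Fitting ideals are also nonzero, so it smooths every germ. The relative singular locus is closed and meets $X_0$ only in $\Sing X^{\circ}$, so the nearby fibers are smooth everywhere, contradicting Theorem~\ref{thm:main}. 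Likewise, in the miniversal clause, curve selection gives only a real-analytic arc. You must complexify it and check that the discriminant pulls back to a discrete set before you have a holomorphic one-parameter smoothing.
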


Theorems~\ref{thm:necessity-intro} and~\ref{thm:xcirc-intro} are the main
results.  The first is the general topological necessity statement; the second
is its application to the mixed Batyrev hypersurface that motivated the
criterion.

Call a Batyrev mirror pair \emph{doubly isolated} when both members have only
isolated singularities.  The mirror partner
$X_{\Delta}$ carries an anticanonical cone over $\mathbb F_1$, a germ with no
smoothing~\cite{PaperI,PaperIII}, so both members of the pair are
non-smoothable.  The criterion says more.  Extended to deformations
that leave such a germ in place (Section~\ref{sec:partner}), it shows
that $X_{\Delta}$ carries the global obstruction as well: two of its
twenty nodes are smoothed by no one-parameter deformation at all, and
this is
independent of the $\mathbb F_1$ germ, which already prevents a
smoothing for
a purely local reason.  The two members of the pair are therefore not
an instance of one obstruction seen locally and globally; each fails
globally, and one of them fails locally in addition.

The analytic core of Theorem~\ref{thm:necessity-intro} is quantitative: the
period of the holomorphic $3$-form of the nearby fiber over the
vanishing cycle of a node equals the local smoothing parameter times a
unit (Lemma~\ref{lem:P}; for the linearity of the nodal period see
also \cite[(1.4)]{RT09}), and an analogous nonvanishing holds for the
distinguished cycles of the degree-$7$ smoothings and the degree-$6$
smoothings through the one-dimensional component (Lemma~\ref{lem:D}), where
the decisive constant is an
explicit period equal to $4\pi^2$ on a cycle we construct by hand.
Consequently a smoothed germ cannot have rationally null-homologous
vanishing homology, at any order of tangency of the family with the
local discriminants.  In the purely nodal case this yields:

\begin{corollary}[Corollary~\ref{cor:friedman}]\label{cor:friedman-intro}
For a connected compact complex threefold with $\omega\cong\cO$,
$h^1(\cO)=0$ and only ordinary double points, any one-parameter
smoothing forces, for every node $q$, the existence of a relation
among the exceptional curve classes of a small resolution that is
nonzero at $q$.  This is the necessity half of Friedman's criterion.  It is not new in the
nodal case: \cite[Theorem~2.5]{Namikawa02} proves the equivalence for
threefolds with terminal singularities, and \cite[\S4]{Collins25} gives a
differential-geometric proof of necessity.  What the method adds is that it
needs no unobstructedness and no hypothesis of a smooth total space, and
that it survives at germs which are not local complete intersections, where
those two literatures stop.
\end{corollary}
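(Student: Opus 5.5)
The plan is to specialize Theorem~\ref{thm:necessity-intro} to the case with no cone points, and then translate the resulting link relation into a relation among exceptional curves. Suppose $X$ has only ordinary double points and admits a one-parameter smoothing. Then the defining conditions of $K(X)$ at cone points are vacuous, and
\[
K(X)=\ker\Bigl(\bigoplus_q H_2(L_q;\QQ)\to H_2(\Xh;\QQ)\Bigr).
\]
Theorem~\ref{thm:necessity-intro} supplies, for each node $q$, a class $\kappa\in K(X)$ with $\kappa_q\neq0$. Its proof rests on Lemma~\ref{lem:P}: the period of the holomorphic $3$-form of $X_s$ over the vanishing sphere at $q$ equals the local smoothing parameter times a unit. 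That parameter is not identically zero along the arc, so the period is nonzero at every order of contact. No unobstructedness and no smoothness of the total space enter.

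The remaining step is the identification. For a node, the link is $L_q\cong S^2\times S^3$, so $H_2(L_q;\QQ)\cong\QQ$. I would fix the small resolution $\pi\colon\Xh\to X$ and take a tubular neighbourhood $N_q\cong$ total space of $\cO(-1)^{\oplus2}$ over $C_q=\pi^{-1}(q)$. Its boundary is $L_q$, and $N_q$ deformation retracts onto $C_q\cong\PP^1$. The $S^2$ factor of the link, pushed into $N_q$, is homologous to $[C_q]$, because the inclusion $\partial N_q\hookrightarrow N_q$ induces an isomorphism on $H_2$: the sphere bundle $S^3\to S^3\times S^2\to S^2$ has $H_2$ mapping isomorphically. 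Consequently the map $\bigoplus_q H_2(L_q;\QQ)\to H_2(\Xh;\QQ)$ becomes $\QQ^{\Sing X}\to H_2(\Xh;\QQ)$, $(a_q)\mapsto\sum a_q[C_q]$, after a choice of generator at each node. Its kernel is exactly the space of relations $\sum a_q[C_q]=0$. The statement $\kappa_q\neq0$ becomes $a_q\neq0$, which is the claimed relation nonzero at $q$.

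The only step needing care is orientation and generator bookkeeping. One must check that the map $L_q\subset X\smallsetminus\Sing X\subset\Xh$ used in Theorem~\ref{thm:necessity-intro} agrees with the inclusion $\partial N_q\subset\Xh$; this holds since $\Xh\smallsetminus\bigcup C_q\cong X\smallsetminus\Sing X$. One must also check that the relation does not depend on which small resolution is chosen at each node. A flop at $q$ replaces $[C_q]$ by the class of the flopped curve, which under the identification of the complements corresponds to $-[C_q]$ on the link. This only changes the sign of $a_q$ and preserves nonvanishing.

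The main obstacle lies entirely in the analytic input already packaged in Theorem~\ref{thm:necessity-intro}. In particular, one needs nonvanishing of the vanishing period at arbitrary contact order: the period vanishes to order exactly $k$ along an arc of contact order $k$ with the local discriminant. Given that result, the corollary is the above formal translation.
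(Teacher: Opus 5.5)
Your proposal is correct and follows the paper's argument. The paper also obtains the corollary by specializing Theorem~\ref{thm:necessity}(1) to the purely nodal case, where $K(X)$ is the full link kernel and each node generator maps to $\pm[\Gamma_i]$ in $H_2(\Xh;\QQ)$; your tubular-neighbourhood computation on $\partial N_q\cong S^2\times S^3$ verifies exactly this. The paper additionally cites Remark~\ref{rem:single} to obtain a single relation nonzero at every node at once. Your check that the relation does not depend on the choice of small resolution is correct but unnecessary, since the theorem holds for any crepant resolution that is small over the nodes.
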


For $X^{\circ}$ the exact finite input localizes the obstruction
topologically:

\begin{proposition}[Proposition~\ref{prop:coloops}]
\label{prop:coloops-intro}
In any hypothetical one-parameter smoothing of $X^{\circ}$, the
vanishing $3$-spheres of the two coloop nodes would be null-homologous
in $H_3(X_s;\QQ)$.
\end{proposition}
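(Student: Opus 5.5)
The plan is to use the standard Clemens--Schmid/Mayer--Vietoris picture that compares the smooth fiber $X_s$ with the resolution $\Xh$ through the links. Write $X^{\mathrm{reg}}=X\smallsetminus\bigcup_p B_p$ for the complement of small conical neighbourhoods of the singular points. In a one-parameter smoothing, $X_s$ is obtained from $X^{\mathrm{reg}}$ by gluing in the Milnor fibers $\Mil_p$ along the links $L_p$. The resolution $\Xh$ is obtained from the same $X^{\mathrm{reg}}$ by gluing in the resolution neighbourhoods $\widehat B_p$: a $\PP^1$ bundle neighbourhood at a node, and a disk bundle over $E_p$ at a cone point.

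At a node $q$ the vanishing sphere $S_q\subset\Mil_q\cong T^*S^3$ generates $H_3(\Mil_q)$. The Mayer--Vietoris sequence for $X_s=X^{\mathrm{reg}}\cup\bigcup\Mil_p$ gives
\[
H_3(X_s)\to\bigoplus_p H_2(L_p)\to H_2(X^{\mathrm{reg}})\oplus\bigoplus_pH_2(\Mil_p).
\]
Dually, and this is the form I will use, $[S_q]=0$ in $H_3(X_s;\QQ)$ exactly when $S_q$ pairs to zero with every class of $H_3(X_s;\QQ)$. The pairing with the chain of $S_q$ is the coordinate $\kappa_q$ of the boundary class $\partial\alpha\in\bigoplus_pH_2(L_p)$, where $\alpha$ is a $3$-cycle in $X_s$. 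At a node, $H_2(L_q)\cong\QQ$ is dual to $H_3(\Mil_q,L_q)$ via the intersection with $S_q$.

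So the step is this. Every $\alpha\in H_3(X_s;\QQ)$ cut along the links gives a class $\kappa=\partial\alpha$. It lies in $\ker\bigl(\bigoplus H_2(L_p)\to H_2(X^{\mathrm{reg}})\bigr)$, hence in the kernel to $H_2(\Xh)$. Its cone components lie in $\Rsm_p$, since they bound the pieces of $\alpha$ inside $\Mil_p$. Thus $\kappa\in K(X)$. Conversely, $\alpha\cdot S_q=\pm\kappa_q$ up to the identification $H_2(L_q;\QQ)\cong\QQ$, by a local intersection computation in $T^*S^3$.

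Hence, if $\kappa_q=0$ for every $\kappa\in K(X)$, then $S_q$ pairs trivially with all of $H_3(X_s;\QQ)$. By Poincaré duality on the compact oriented $6$-manifold $X_s$, this gives $[S_q]=0$ in $H_3(X_s;\QQ)$.

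It remains to use the exact finite input for $X^{\circ}$. By definition, the two coloop nodes $q_1,q_2$ have $\kappa_{q_i}=0$ on every relation among the exceptional curve classes. I must upgrade this from nodal relations to all of $K(X^{\circ})$, that is, also to relations involving classes from the $E_p$ with $\kappa_p\in\Rsm_p$, for every choice of component at the two degree-$6$ points. I would get this from the explicit relation matrix computation that the paper carries out in Section~\ref{sec:thetest}. Concretely, I would compute the images in $H_2(\Xh;\QQ)$ of the exceptional curves and of the root-lattice classes $\Rsm_p$ from Proposition~\ref{prop:kernels}. Then I would check that the coordinate functionals at $q_1,q_2$ vanish on the kernel of the combined map, in each of the $2^2$ component choices.

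I expect the main obstacle to be this last verification, together with making the cut-and-paste identification $\partial\colon H_3(X_s)\to K(X)$ rigorous at the cone points. There one needs $H_3(\Mil_p;\QQ)$ and the map $H_2(L_p)\to H_2(\Mil_p)$ exactly as in Proposition~\ref{prop:kernels}. One also needs the observation that 3-cycles supported in $\Mil_p$ contribute nothing to the pairing with $S_q$, since the neighbourhoods of distinct singular points are disjoint.
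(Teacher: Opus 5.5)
Your proposal is correct and is essentially the paper's proof: the Mayer--Vietoris boundary $\partial\gamma$ lies in $K(X^{\circ})$ with node coordinate a nonzero multiple of $\gamma\cdot\delta_q$ (Lemma~\ref{lem:M}), and Poincar\'e duality turns the forced vanishing of $u_9,u_{11}$ into $\delta_9=\delta_{11}=0$. The paper's finite step is leaner than your four-profile check. It imposes only the degree-$7$ conditions $\alpha_1=\alpha_2=0$, which are read in resolution-side coordinates via Lemma~\ref{lem:boundary}; that lemma supplies the marking compatibility you leave implicit when you transport Proposition~\ref{prop:kernels}. Since $u_9=u_{11}=\alpha_1-\alpha_2$ already holds on $\ker(B^{\mathsf T})$, the degree-$6$ points and their branch choice never enter.
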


Lemma~\ref{lem:P} says that a smoothed node has a nonzero vanishing-cycle
class, whereas Proposition~\ref{prop:coloops-intro} makes that class zero at
either coloop node.  This contradiction proves
Theorem~\ref{thm:xcirc-intro}.  A
complementary second proof, with a different period input, runs
through a degree-$7$ point instead of
the nodes (Section~\ref{sec:proofs}).

The one-germ result and the bounded census are applications of the general
criterion rather than additional main theorems.  On the seven-vertex polytope
$\Delta_7$, the unique singularity is a degree-$7$ cone and its root line
survives in $H_2(\Xh;\QQ)$, so Corollary~\ref{cor:onegerm} obstructs a
smoothing.  Among the $77$ admissible polytopes with at most nine vertices and
a pentagonal $2$-face, the same test obstructs $76$
(Proposition~\ref{prop:census}).

Section~\ref{sec:measures} interprets the matrix used in the finite test.
Choose a basis of the link homology at each germ.  Each basis class defines
a linear functional on the ambient Picard group by intersection with its
image on the resolution; its values on ambient divisor classes form a
\emph{row} of the matrix $B$.  Read in local ray coordinates, the rows at a
germ form a rational basis whose complexification is $T^1$
(Proposition~\ref{prop:T1channels}).  For the isolated hypersurfaces under
consideration, vanishing of Batyrev's correction term implies that the
ambient divisors detect the entire topological relation kernel
(Proposition~\ref{prop:kernelequal}).  Finally, under the resolution
hypotheses of Proposition~\ref{prop:qfact}, algebraic $\QQ$-factoriality of
$X$ forces every divisorial row to vanish.  This explains compatibility
with Gross's smoothing theorem for primitive type~II contractions
\cite[Theorem~5.8]{Gross97}.

The criterion does not obstruct every mixed example.  The anticanonical
hypersurface $X_{19}$ of a $19$-vertex reflexive polytope from the
companion paper~\cite{PaperIII} has fourteen nodes and one degree-$7$
cone point, and Section~\ref{sec:X19} shows that it again has two nodal
coloops.  Applied to $X_{19}$, the finite test is
silent: no coordinate to which Theorem~\ref{thm:necessity-intro}'s
nonvanishing conclusion applies is forced to vanish
(Section~\ref{sec:test}).

\begin{example}\label{q:X19}
The companion paper~\cite{PaperV} proves that $X_{19}$ is smoothable and
that its full deformation space is smooth of dimension $30$.  Every
smoothing must deform its degree-$7$ cone point as well as its nodes;
the nodal relation space alone does not satisfy the necessity condition.
\end{example}

\subsection{Method}
The general necessity theorem has two inputs.  First, the vanishing-period computations of
Section~\ref{sec:periods} convert nonvanishing of local smoothing parameters
into nonvanishing homology classes.  Second, the local-to-global membership
statement requires a boundary-marked topological comparison.  The germ Milnor
fiber is identified with a del Pezzo threefold complement as a manifold with
boundary, compatibly with the two natural circle fibrations of the link
(Lemma~\ref{lem:boundary}); together with the classification of del Pezzo
threefolds of degrees~$6$ and~$7$ (Fujita~\cite{Fujita80a}), this computes the
subspaces $\Rsm_p$.  Surgery along the links and Mayer--Vietoris then place the
boundary of every $3$-cycle of $X_s$ in the space $K(X)$ of
Theorem~\ref{thm:necessity-intro}.  Its application to $X^{\circ}$ adds a third
input: an exact
integral computation on a fixed crepant resolution shows that on the
relevant restricted kernels the coordinates of the two coloop nodes
and of the degree-$7$ directions vanish identically
(Section~\ref{sec:example}); all decisive matrix data are printed, and
Appendix~\ref{app:cert} lists the programs that recompute them.

The argument does not use the first-order deformation theory of $X$:
the localization map and the comparison of coordinates on local
deformation spaces do not enter.  The periods apply directly to the
smoothing arc, including arbitrary orders of contact with the local
discriminants.  For nodes, the existence criterion for an actual
smoothing is already stated in \cite[Thm.~1.3]{RT09}.  The point here is
to obtain the mixed necessity statement directly from periods, without
assuming unobstructedness of the deformation space of $X$.

\subsection{Relation to prior work}
Friedman~\cite{Friedman86} established the relation criterion for
first-order smoothings of nodal Calabi--Yau threefolds.
Rollenske--Thomas~\cite[Thms.~1.2--1.3]{RT09} recall both that statement
and the criterion for actual smoothings obtained using unobstructedness,
and develop its higher-dimensional counterpart.  Namikawa
\cite[Theorem~2.5]{Namikawa02} proved the nodal statement as an
equivalence, for terminal singularities, and Collins
\cite[\S4]{Collins25} proves necessity there by a differential-geometric
argument.  In the opposite direction, Aguilar \cite{Aguilar26} shows that
for log conifold transitions of Fano threefold pairs of index two the
analogous balancing conditions vanish and
local first-order smoothings of nodes lift independently, so no relation is forced;
the contrast marks how much of the present obstruction is Calabi--Yau
specific.  Ning~\cite{Ning26} extends local-freeness results beyond the local
complete intersection case.  Friedman's generalized $T^1$ lifting
theorem likewise permits isolated Du Bois singularities that are not
local complete intersections \cite[Thm.~3.10]{Friedman25}.
The companion paper~\cite{PaperV} combines that theorem with crepant
partial models and analytic blow-down to prove integrability of the
selected deformation spaces under its stated projection hypothesis.
Friedman--Laza
\cite[\S5]{FL22a} identify the first-order localization data of
Calabi--Yau varieties with isolated local complete intersection
singularities in terms of link homology, and \cite{FL22b} develops the
corresponding local theory; our
Section~\ref{sec:example} uses only the resolution-side avatar of
that picture, in the form of an integral matrix.
Altmann~\cite{Altmann95,Altmann97} computed the miniversal deformations of
the del Pezzo cone germs and their smoothing components, which
Gross~\cite[\S5.5]{Gross97} describes geometrically;
Petracci~\cite{Petracci20,Petracci19} and
Kaloghiros--Petracci~\cite{KP21} give modern treatments, the latter
computing the Betti numbers of the degree-$6$ Milnor fibers, in
agreement with Section~\ref{sec:germs}.  The classification of the
sweeping del Pezzo threefolds is due to Fujita~\cite{Fujita80a}; we
follow Fujita in recording that the degree-$6$ case of Iskovskikh's earlier list
omits $\PP(T_{\PP^2})$, and cite Fujita, with
Kuznetsov--Prokhorov~\cite{KP23} as the modern reference.  On the
constructive side, multi-parameter deformations of toric varieties in
a single character degree were built by
Ilten--Vollmert~\cite{IV12}.  The companion paper~\cite{PaperV}
proves the mixed smoothing criterion, including the two-dimensional
degree-six component, and proves the smoothability of
$X_{19}$ in Example~\ref{q:X19}.

\begin{convention}\label{conv:dp}
$\dP_d$ denotes a del Pezzo \emph{surface} of degree $d=K^2$; thus
$\dP_6=\Bl_3\PP^2$ and $\dP_7=\Bl_2\PP^2$.  A \emph{del Pezzo
threefold} is a smooth projective threefold $V$ with $-K_V=2H$ for an
ample $H$; its degree is $H^3$.  For a polygon or polytope, edges are
\emph{unit} if they have lattice length one.  Homology and cohomology
have $\QQ$-coefficients unless stated otherwise.  The anticanonical
cone over a del Pezzo surface $S$ is
$C(S)=\operatorname{Spec}\bigoplus_{k\ge0}H^0(S,-kK_S)$.
\end{convention}

\subsection*{Acknowledgements}
I am grateful to Mark Gross for first drawing my attention, many years ago, to the problem of smoothing canonical singularities on Calabi--Yau threefolds.
The author used LLMs during the development of this work for literature exploration, editorial and coding assistance and as an adversarial reader.

\section{The germs: links, miniversal bases, Milnor fibers}
\label{sec:germs}

Throughout this section $(Y,0)=C(S)$ is the anticanonical cone over
a smooth del Pezzo surface $S$, an isolated Gorenstein toric threefold
singularity when $S$ is toric.  For $S=\dP_6$, $\dP_7$ and
$\mathbb F_1$, the cases used in this paper, it is the toric variety
of the cone over the unit-edge reflexive hexagon, pentagon and
quadrilateral respectively, placed at height one.  (The
unit-edge polygon here is the \emph{fan} polygon of $S$; its polar,
the Newton polygon of $-K_S$, is not unit-edge for the pentagon.)

\subsection{Links}\label{sec:links}
The punctured cone is the total space of $\cO_S(K_S)$ minus its zero
section, so the link $L$ is the unit circle bundle of $\cO_S(K_S)$,
and the homology Gysin sequence together with $H_1(S)=0$ gives a
canonical isomorphism
\begin{equation}\label{eq:linkH2}
H_2(L;\QQ)\ \cong\ K_S^{\perp}\ \subset\ H_2(S;\QQ),
\end{equation}
of rank $3$ for $\dP_6$ and $2$ for $\dP_7$.  For a node,
$L\cong S^2\times S^3$ and $H_2(L;\QQ)=\QQ$.  The lattice
$K^{\perp}(\dP_6)$ is the root lattice $A_1\oplus A_2$ (discriminant
$6$); $K^{\perp}(\dP_7)$ has discriminant $7$ and contains exactly one
pair $\pm$ of primitive classes of square $-2$.

\subsection{Miniversal deformations}\label{sec:miniversal}
By Altmann's theorems, the deformation theory of these germs is
completely explicit.  $T^1$ is concentrated in the Gorenstein
character degree, of dimension $n-3$ for an $n$-gon
\cite[(6.5) and the subsequent Remark]{Altmann95}; the versal family
in that degree, constructed in \cite{Altmann97}, is therefore a versal
deformation of the affine cone, with base ring computed there:
\[
\begin{aligned}
\dP_7&:\ \ \CC[s_1,s_2]/(s_1^2,\,s_1s_2)
&&\text{\cite[(9.1)(v)]{Altmann97}},\\
\dP_6&:\ \ \CC[s_1,s_2,s_3]/(s_1s_3,\,s_2s_3)
&&\text{\cite[(2.5)]{Altmann97}}.
\end{aligned}
\]
The reduced components correspond to the maximal decompositions of the
polygon into Minkowski sums of lattice polygons
\cite[(2.5),\,(8.1)]{Altmann97}: the pentagon decomposes in exactly
one way, into a unit segment (its pair of parallel edges) and a
unimodular triangle, giving
one reduced component, the line $\{s_1=0\}$; the hexagon decomposes as
a sum of two triangles (the line $\{s_1=s_2=0\}$) and as a sum of
three segments (the plane $\{s_3=0\}$).

We record three standard supplements used later.
First, \emph{minimality}: the Kodaira--Spencer map of Altmann's family
is an isomorphism onto $T^1$ \cite[(6.2),\,(6.3)]{Altmann97}, so the
versal family is miniversal.  Second, \emph{the germ}: $T^1$ and $T^2$
of the affine cone are finite-dimensional and supported at the vertex,
and the deformation theory of the affine variety is identified with
that of the analytic germ $(Y,0)$ by Artin
approximation~\cite{Artin69}; the analytic miniversal deformation
exists by Grauert~\cite{Grauert72}; the base is the analytic germ of
the ring displayed above.  (The versality criterion used in
\cite[(7.2)]{Altmann97} is that of de~Jong--van~Straten
\cite[\S4]{dJvS94}.)  Third, \emph{arcs}: a morphism from the germ of
a disk to the miniversal base factors through the reduction and,
since the analytic local ring of a disk is a domain, lies in a single
reduced component; for the pentagon base this forces $s_1\equiv0$
along any arc, and for the hexagon base it forces the arc into the
line or into the plane.  We refer to the corresponding local
smoothings as smoothings \emph{through} the given component.

\subsection{Sweeps and Milnor fibers}\label{sec:sweeps}
The smoothing components of these cones are cones swept by hyperplane
sections, in the sense of Pinkham
\cite[(4.2),\,(5.1),\,(5.5)]{Pinkham74}; see
\cite[Lemma~4.1, Proposition~4.2]{PP15} for the statement in all
dimensions in the pencil form used here.  Concretely
(Gross~\cite[\S5.5]{Gross97};
Petracci~\cite[Proposition~2.2]{Petracci20},
\cite[Proposition~4.12]{Petracci19}): the general fiber of the
degree-$7$ smoothing is $V_7\smallsetminus\Sb$ with
$V_7=\Bl_{pt}\PP^3$; the general fiber of the hexagon's one-dimensional
(line) component is $(\PP^1)^3\smallsetminus\Sb$; and that of its
two-dimensional (plane) component is
$W_6\smallsetminus\Sb$ with $W_6=\PP(T_{\PP^2})$ the divisor of bidegree
$(1,1)$ in $\PP^2\times\PP^2$.  In each case $V$ is a del Pezzo
threefold of the same degree with $\Sb\in|H|$ a copy of $S$, embedded
by $-K_S=H|_{\Sb}$; the classification of the possible $V$
(Fujita~\cite[(5.8),\,(5.16)]{Fujita80a}) shows there are no others.
For the record, the open Milnor fibers of the three branches have
$(b_2,b_3)=(1,1)$, $(2,1)$ and $(1,2)$ respectively; the degree-$6$
values agree with \cite[Proposition~3.5]{KP21}.

\begin{proposition}\label{prop:kernels}
Let $\Mil=V\smallsetminus\Sb$ be one of the three sweep complements.
Under the identification~\eqref{eq:linkH2} of $H_2$ of the link
$L=\partial(\text{tube around }\Sb)$ with $K^{\perp}_{S}$,
\[
\ker\bigl(H_2(L;\QQ)\to H_2(\Mil;\QQ)\bigr)
=\ker\bigl(K^{\perp}_S\to H_2(V;\QQ)\bigr)=
\begin{cases}
\text{the }(-2)\text{-line} & V=V_7,\\
A_1 & V=(\PP^1)^3,\\
A_2 & V=W_6,
\end{cases}
\]
where the map is the restriction to $K^\perp$ of the pushforward
$H_2(S;\QQ)\to H_2(V;\QQ)$.  For a node,
$\ker(H_2(L;\QQ)\to H_2(\Mil;\QQ))=H_2(L;\QQ)=\QQ$.
\end{proposition}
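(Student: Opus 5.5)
The proof has two parts: a topological identification of the first kernel with the second, and then an explicit computation of the pushforward $K_S^\perp\to H_2(V;\QQ)$ for each of the three del Pezzo threefolds $V$.

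\emph{Topological identification.} Let $T$ be a closed tubular neighbourhood of $\Sb$ in $V$. Then $T$ is a disk bundle over $\Sb\cong S$ with boundary $L$ and normal bundle $H|_{\Sb}=-K_S$. (The link of the cone is the circle bundle of $K_S$; it is diffeomorphic to that of $-K_S$ with the orientation reversed, which does not affect the identification~\eqref{eq:linkH2}.) Write $V=\Mil\cup T$ with $\Mil\cap T\simeq L$. Mayer--Vietoris gives the exact sequence
\[
H_3(V)\to H_2(L)\xrightarrow{(i_*,\,j_*)} H_2(\Mil)\oplus H_2(\Sb)\to H_2(V).
\]
The map $j_*\colon H_2(L)\to H_2(T)=H_2(\Sb)$ is the Gysin projection. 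It is injective with image $K_S^\perp$, which is how~\eqref{eq:linkH2} arises. Hence, for $x\in H_2(L)$, we have $i_*x=0$ if and only if $(0,j_*x)$ lies in the image of $(i_*,j_*)$. By exactness this holds if and only if $j_*x$ maps to zero in $H_2(V)$.

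To make the converse precise, suppose $j_*x\mapsto 0$ in $H_2(V)$. Exactness gives some $y\in H_2(L)$ with $i_*y=0$ and $j_*y=j_*x$. Injectivity of $j_*$ then gives $y=x$, so $i_*x=0$. This establishes the first equality: the kernel of $H_2(L)\to H_2(\Mil)$ corresponds exactly to the kernel of the pushforward $K_S^\perp\to H_2(V)$.

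\emph{Computing the pushforward.} The pushforward $H_2(\Sb)\to H_2(V)$ is dual to restriction $H^2(V)\to H^2(\Sb)$. So a class $c\in K^\perp$ lies in the kernel exactly when $D\cdot c=0$ for every divisor $D$ on $V$. I would identify $\Sb\subset V$ explicitly in each case.

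\emph{Case $V_7=\Bl_{pt}\PP^3$.} Here $H=2h-e$, so $\Sb$ is a quadric through the point, and blowing it up at that point gives $\Bl_2\PP^2$. In the basis $\ell,e_1,e_2$ of $H_2(\Sb)$, the restrictions are $h|_{\Sb}=\ell$, the hyperplane class of the quadric pulled back, namely $\ell$ of the projected $\PP^2$ model, and $e|_{\Sb}$ the exceptional line. The kernel is then the set of $c\in K^\perp$ with $h\cdot c=e\cdot c=0$. Since $K^\perp$ has rank $2$ and the image has rank at most $2$, while $-K_S$ lies in the image, the image of $K^\perp$ has rank $1$ and the kernel is a line. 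Checking that $e_1-e_2$ (the unique $(-2)$-class up to sign, when the conic-fibration presentation is used) pairs to zero with both $h$ and $e$ identifies the kernel as the $(-2)$-line.

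\emph{Case $(\PP^1)^3$.} The surface $\Sb$ is a $(1,1,1)$ divisor. Its three projections to $(\PP^1)^2$ are blowdowns, and the pullbacks of the three rulings $f_1,f_2,f_3$ are conic-bundle classes with $f_1+f_2+f_3=-K_S$. The kernel is $\{c\in K^\perp: f_i\cdot c=0 \text{ for all } i\}$. This has rank $3-(3-1)=1$ and is spanned by a root. I would compute it to be the $A_1$ summand, namely the root orthogonal to all three conic classes, which is the $A_1$ root of the hexagon decomposition into two triangles.

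\emph{Case $W_6\subset\PP^2\times\PP^2$.} Here the two pullbacks $g_1,g_2$ with $g_1+g_2=-K_S$ are the two birational morphisms $\dP_6\to\PP^2$. The kernel has rank $3-1=2$ and is the root system orthogonal to both: the $A_2$ formed by $e_i-e_j$ in one blowdown model.

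\emph{Nodes.} For a node, $\Mil=T^*S^3$ has $H_2=0$, so the kernel is all of $H_2(L)$.

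\emph{Main obstacle.} The main obstacle is not the linear algebra but matching conventions. One must check that the identification~\eqref{eq:linkH2} coming from the cone agrees, up to sign, with the one from the tube around $\Sb$ in $V$. This is the same circle bundle, so the main worry is orientation. One must also check that the rank count of the image, forced by the restriction of $H^2(V)$ having rank $b_2(V)$, pins down which rank-one or rank-two root sublattice occurs; for the rank-one cases this is determined because the kernel is spanned by the unique root orthogonal to the classes above.
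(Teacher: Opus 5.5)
Your proposal is correct and follows the paper's proof. The first equality rests on the same vanishing $H_3(V,\Mil)\cong H_3(T,L)\cong H_1(\Sb)=0$: the paper uses it to make $H_2(\Mil)\to H_2(V)$ injective and then factors $L\subset\Mil\subset V$ through the tube, while you use it as injectivity of the Gysin projection inside Mayer--Vietoris. Your case computations, with the conic classes $\ell-e_i$ for $(\PP^1)^3$ and the classes $\ell$, $2\ell-e_1-e_2-e_3$ for $W_6$, are the paper's.

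One slip occurs in the $V_7$ case: $h|_{\Sb}=2\ell-e_1-e_2$, not $\ell$. The class $\ell$ is $(h-e)|_{\Sb}$, and $e|_{\Sb}=\ell-e_1-e_2$. The kernel depends only on the span $\langle \ell,\,e_1+e_2\rangle$ of the restricted classes, and that span has rank exactly $2$, not merely at most $2$. So your conclusion that the kernel is $\langle e_1-e_2\rangle$ stands.
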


\begin{proof}
Since $H_1(\Sb)=0$, the Thom
isomorphism gives $H_3(V,\Mil)\cong H_1(\Sb)=0$, so
$H_2(\Mil)\hookrightarrow H_2(V)$, and the inclusion $L\subset \Mil$
composed with this injection factors, up to homotopy through the
tube, as the projection $L\to\Sb$ followed by $\Sb\subset V$; the two
kernels therefore coincide.  (Here $L$ is the circle bundle of the
normal bundle $\cO_{\Sb}(\Sb)\cong\cO_{\Sb}(-K_{\Sb})$; its Gysin
sequence identifies $H_2(L;\QQ)$ with the same subspace $K^{\perp}$ as
in~\eqref{eq:linkH2}.)  The pushforward is computed in the
standard bases.  For $V_7$: a smooth member of $|H|=|2H_{\PP^3}-E|$
is the strict transform of a quadric through the blown-up point, so
$\Sb\cong\Bl_2\PP^2$ with $H_{\PP^3}|_{\Sb}=2\ell-e_1-e_2$ and
$E|_{\Sb}=\ell-e_1-e_2$; the pushforward matrix has rows
$(2,1),(1,1),(1,1)$ on $(\ell,e_1,e_2)$ and kernel
$\langle e_1-e_2\rangle$, which is the $(-2)$-line.  For
$(\PP^1)^3$: the three projections restrict to the conic pencils
$|\ell-e_i|$, the rows are $(1,1,1)$ and the unit vectors, and the
kernel is $\langle\ell-e_1-e_2-e_3\rangle=A_1$.  For $W_6$: the two
contractions restrict to $\ell$ and $2\ell-e_1-e_2-e_3$, and the
kernel is $\langle e_1-e_2,\,e_2-e_3\rangle=A_2$.  For the node,
$\Mil\simeq T^*S^3$ has $H_2=0$.
\end{proof}

\begin{remark}\label{rem:xi}
The three kernels are the lattices $\Xi(V)$ of
Kuznetsov--Prokhorov: \cite[Theorem~1.9(i)]{KP23} states
$\Cl(V)\cong\Xi(V)^{\perp}\subset\Cl(\Sb)$ with $\Xi(V)$ negative
definite of rank $10-d-\rho(V)$, giving $A_1$, $A_1$, $A_2$ for
$V_7$, $(\PP^1)^3$, $W_6$, respectively.
\end{remark}

\subsection{Which sweep for which component}\label{sec:match}
The one-dimensional (line) component of the hexagon base sweeps to
$(\PP^1)^3$ and the two-dimensional (plane) component to $W_6$: this is
stated in \cite[Proposition~2.2]{Petracci20}, follows from
\cite[(7.1.5)]{Altmann95} (the total spaces over the two components are
the cones over $(\PP^1)^3$ and over $\PP^2\times\PP^2$, and the
hyperplane section of $\PP^2\times\PP^2$ in the Segre embedding is $W_6$).
The corresponding Betti numbers are given in
\cite[Proposition~3.5]{KP21}.  Theorem~\ref{thm:main} uses only the
nodes and the degree-$7$ branch.  The matching is used in
Lemma~\ref{lem:D} for the degree-$6$ line branch, and through it in
Theorem~\ref{thm:necessity}(3) and in the degree-$6$ assertion of
Theorem~\ref{thm:partner}.

\section{Vanishing periods}\label{sec:periods}

This section contains the analytic mechanism.  Throughout,
$\cX\to\Disk$ is a one-parameter smoothing of a connected compact complex
threefold $X$ with $\omega_X\cong\cO_X$, $h^1(\cO_X)=0$ and isolated
singularities of the types in Theorem~\ref{thm:necessity-intro}.

\subsection{A relative holomorphic \texorpdfstring{$3$}{3}-form}\label{sec:relform}
Flatness and the Gorenstein property of all fibers make
$\cX\to\Disk$ a Gorenstein morphism after shrinking the disk, so the
relative dualizing sheaf $\omega_{\cX/\Disk}$ is invertible and
compatible with base change \cite{Conrad00} (in the analytic category this
also follows from adjunction, as in Remark~\ref{rem:partialform}).  We first prove
$c_1(\omega_{X_s})=0$ integrally.  Choose the disjoint good
representatives and exterior $W$ of Section~\ref{sec:decomp}.  Over a
neighborhood of $W$ the family is a smooth proper submersion with
boundary and hence is $C^\infty$-trivial.  Since
$\omega_X|_W\cong\cO_W$, the restriction of
$c_1(\omega_{\cX/\Disk})$ to the corresponding exterior in $X_s$
vanishes.  On each local piece it also vanishes: choose first a Stein
contractible neighborhood $N_p$ of $(p,0)$ in $\cX$, on which the
invertible sheaf $\omega_{\cX/\Disk}$ is trivial, and then the good
representative $U_p$ inside $N_p$.

For every boundary link in this paper, $H^1(L_p;\ZZ)=0$.  This is
immediate for the nodal link $S^2\times S^3$.  For a cone point, $L_p$
is the circle bundle of $\cO_S(K_S)$ over the simply connected del
Pezzo surface $S$; the homotopy sequence shows that $\pi_1(L_p)$ is
the cyclic group $\ZZ/d$, where $d$ is the divisibility of the Euler
class $K_S\neq0$, because the connecting map
$\pi_2(S)\to\pi_1(S^1)=\ZZ$ is evaluation against $K_S$.  Hence
$H^1(L_p;\ZZ)=\operatorname{Hom}(\ZZ/d,\ZZ)=0$ for every del Pezzo
cone point, whether or not $K_S$ is primitive.  (It is primitive for
$\dP_6$, $\dP_7$ and $\mathbb F_1$, where $L_p$ is simply connected,
but not for $\PP^1\times\PP^1$.)  The Mayer--Vietoris
sequence for $X_s=W\cup\bigsqcup_p\Mil_p$ therefore injects
\[
 H^2(X_s;\ZZ)\lhook\joinrel\longrightarrow
 H^2(W;\ZZ)\oplus\bigoplus_pH^2(\Mil_p;\ZZ).
\]
The preceding local vanishings give $c_1(\omega_{X_s})=0$.
Semicontinuity gives $h^1(\cO_{X_s})=0$; the exponential sequence then
embeds $\Pic(X_s)$ into $H^2(X_s;\ZZ)$ and forces
$\omega_{X_s}\cong\cO_{X_s}$.  (The hypothesis
$\omega_X\cong\cO_X$ cannot be dropped: the cone over a smooth quadric
surface in $\PP^4$ is a one-node threefold smoothed by the family of
quadric threefolds, and the vanishing cycle of that smoothing
\emph{is} null-homologous; that example satisfies $h^1(\cO_X)=0$ and
is excluded here exactly because $\omega_X\cong\cO_X(-3)$ is
nontrivial.  The hypothesis $h^1(\cO_X)=0$ enters only through the
exponential sequence in the previous sentence.)  Constants and upper
semicontinuity give $h^0(\cO_{X_s})=1$ after shrinking, hence
$h^0(\omega_{X_s})=1$.  By Grauert's base-change theorem
\cite{Grauert60}, $\pi_*\omega_{\cX/\Disk}$ is then a line bundle on the shrunk
disk; we fix a trivializing section $\Omega$, whose restriction
$\Omega_s$ generates the invertible sheaf $\omega_{X_s}$ and is
therefore a nowhere-vanishing holomorphic $3$-form on the smooth locus
of $X_s$ for $s\neq0$ and a generator of $\omega_X$ near each singular point at
$s=0$.

\subsection{Nodes}\label{sec:nodeperiod}
\begin{lemma}\label{lem:P}
Let $p$ be an ordinary double point of $X$.  Then the vanishing-cycle
class $\delta_p(s)\in H_3(X_s;\QQ)$ is nonzero for all small
$s\neq0$.
\end{lemma}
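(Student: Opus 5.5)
The plan is to compute the period $\int_{\delta_p(s)}\Omega_s$ of the relative form $\Omega$ from Section~\ref{sec:relform} and show it is nonzero for small $s\neq0$. A class with a nonzero period cannot be zero in $H_3(X_s;\QQ)$, because $\Omega_s$ is a closed holomorphic $3$-form on the compact smooth fiber $X_s$. This argument needs no unobstructedness and no control of the contact order.

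\textbf{Local model.} Near $(p,0)$ the total space $\cX$ is a hypersurface germ in $\CC^4\times\Disk$ given by $f(x,s)=0$ with $f(x,0)=x_1^2+x_2^2+x_3^2+x_4^2$. This holds because the miniversal base of a node is the smooth line with the family $\sum x_i^2=t$, so the arc induces $\cX\cong\{\sum x_i^2=g(s)\}$ locally, after an $s$-dependent coordinate change. Here $g$ is a holomorphic function with $g(0)=0$. It is not identically zero, since $X_s$ is smooth, so $g(s)=c\,s^m(1+O(s))$ with $c\neq0$ and $m\ge1$ the contact order.

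On the local piece we write $\Omega=u(x,s)\,\mathrm{Res}\,\frac{dx_1\wedge\cdots\wedge dx_4}{\sum x_i^2-g(s)}$. The factor $u$ is a unit, because $\Omega_0$ generates $\omega_X$ at $p$ and the residue form generates the relative dualizing sheaf of the hypersurface family.

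\textbf{Period computation.} The vanishing sphere is $\delta_p(s)=\sqrt{g(s)}\,S^3_{\RR}$, the real sphere rescaled. Substituting $x=\sqrt{g}\,y$, the residue form scales by $g^{(4-2)/2}=g$. This gives
\[
\int_{\delta_p(s)}\Omega_s \;=\; g(s)\int_{S^3}u(\sqrt{g}\,y,s)\,\mathrm{Res}\tfrac{dy}{\sum y_i^2-1}
\;=\; g(s)\bigl(u(0,0)\,\mathrm{vol}+o(1)\bigr),
\]
where the constant $\mathrm{vol}$ is the standard period, a nonzero multiple of $\pi^2$. It equals $\pm 2\pi^2$ up to normalization and can be checked directly, for instance with the Gaussian trick or the volume of $S^3$. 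Hence the period equals $g(s)$ times a unit, which is the form ``local parameter times a unit'' announced in the introduction. It is nonzero for small $s\neq0$ because $g$ has an isolated zero at $s=0$.

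\textbf{Main obstacle.} I expect the key step to be justifying the local normal form together with the unit $u$, uniformly in $s$. This requires three things. First, a good representative in which $\delta_p(s)$ is a well-defined class, namely the Milnor fiber of the pulled-back family over $s$. Second, pulling back the miniversal family along the arc, which is where arbitrary contact order enters, only through $g$. Third, the identification of $\Omega|_{U_p}$ with $u\cdot$(residue form), for which we use that $\omega_{\cX/\Disk}$ is trivial on the Stein neighborhood $N_p$ and compatible with base change.

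Once this is in place, the continuity of $u$ at $(0,0)$ gives the uniform estimate. The scaling argument also requires noting that the residue form on $\{\sum y_i^2=1\}$ restricted to the real sphere is a nonzero multiple of the volume form.
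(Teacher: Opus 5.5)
Your proposal is correct and is essentially the paper's proof: you pull back the node's miniversal family along the classifying arc, write $\Omega$ as a unit times the relative Gelfand--Leray residue form, and rescale the real vanishing sphere to get a period equal to the local parameter times a unit; since $\Omega_s$ is closed, Stokes then rules out a rationally null-homologous vanishing cycle. The differences are cosmetic. You use the normal form $\sum x_i^2$ directly instead of $xy-zw$ followed by a linear change, which is where the paper's Jacobian factor $4$ in $c_0=4\pi^2$ comes from. Also, with the Gelfand--Leray normalization your constant $\mathrm{vol}$ is $\int_{S^3}dy_1\,dy_2\,dy_3/(2y_4)=\pm\pi^2$, half the volume of $S^3$, rather than $\pm2\pi^2$; this is harmless, since only its nonvanishing is used.
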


\begin{proof}
The germ $(X,p)$ is $\{xy-zw=0\}\subset(\CC^4,0)$, with miniversal
deformation $\{xy-zw=u\}$ over the $u$-line
\cite{KasSchlessinger72}.  The family induces a classifying map
$s\mapsto\tilde u(s)$, holomorphic with $\tilde u(0)=0$ (only its
nonvanishing at each fixed $s\neq0$ is used, which is independent of
choices), and $X_s$
is smooth near $p$ exactly when $\tilde u(s)\neq0$; the hypothesis
gives $\tilde u(s)\neq0$ for all small $s\neq0$.  The pullback total
space is the hypersurface $\{xy-zw=\tilde u(s)\}$, Gorenstein even at
its singular points, and adjunction exhibits $\omega_{\cX/\Disk}$
near $p$ as generated by the relative Gelfand--Leray form
$\GL_u=\operatorname{Res}\bigl[dx\,dy\,dz\,dw/(xy-zw-u)\bigr]$.
Hence $\Omega=g\cdot\GL$ with $g$ holomorphic on the total-space germ
and $g(p,0)\neq0$; extend $g$ to an ambient holomorphic function so
that $g(p,s)$ is defined.

After the linear change putting $xy-zw$ into the form
$\sum_{i=1}^4v_i^2$, the vanishing cycle over $u$ is the real slice
$\delta_u=\sqrt u\cdot S^3$, whose class is single-valued: in fiber
dimension three the Picard--Lefschetz transvection fixes $\delta$
(as $\langle\delta,\delta\rangle=0$), and the antipodal ambiguity of
$\sqrt u$ acts on $S^3$ with degree $+1$.  Under
$v\mapsto\lambda v$ one has $u\mapsto\lambda^2u$ and
$\GL\mapsto\lambda^2\GL$, so the local period is exactly linear:
\[
\int_{\delta_u}\GL_u\;=\;c_0\,u,
\qquad
c_0=4\int_{S^3}\frac{dv_1\,dv_2\,dv_3}{2v_4}=4\pi^2\neq0
\]
up to orientation conventions, the factor $4$ being the Jacobian
determinant of the linear change of coordinates.  For the twisted form, expanding the
ambient extension $g=g(p,s)+O(|v|)$ and using $|v|=|u|^{1/2}$ on
$\delta_u$ gives
\[
F(u,s):=\int_{\delta_u}g\cdot\GL_u
= u\,\bigl(c_0\,g(p,s)+O(|u|^{1/2})\bigr),
\]
so $F(u,s)=u\cdot(\text{unit})$ near $(0,0)$; here $u,s$ are
independent until the substitution $u=\tilde u(s)$.

Suppose $\delta_p(s_0)=0$ in $H_3(X_{s_0};\QQ)$ for some small
$s_0\neq0$.  A nonzero integer multiple of the cycle then bounds, and
$\Omega_{s_0}$ is closed, so
$\int_{\delta_p(s_0)}\Omega_{s_0}=0$.  But this period equals
$F(\tilde u(s_0),s_0)=\tilde u(s_0)\cdot(\text{unit})\neq0$, a
contradiction.
\end{proof}

\subsection{Del Pezzo cone points}\label{sec:dpperiod}
For the cone germs the same strategy applies, with the local model
family replaced by the sweep and the explicit period computed on the
sweep complement.  Both relevant complements admit a uniform toric
description.  Let $V$ be $(\PP^1)^3$ or $\Bl_{pt}\PP^3$ with its fan
in $N\cong\ZZ^3$, and let $m\in N^{\vee}$ be the character $(1,1,1)$,
respectively $(1,1,-1)$ in the basis where the fan of
$\Bl_{pt}\PP^3$ has rays $e_1,e_2,e_3$, $r=-e_1-e_2-e_3$ and
$e_1+e_2+e_3$.  In both cases $\langle m,v\rangle=\pm1$ on every ray,
the closure $\Sb$ of $\{\chi^m=1\}$ is a copy of the appropriate del
Pezzo surface with $[\Sb]=-\tfrac12K_V$, and
\begin{equation}\label{eq:omegaM}
\Omega_\Mil=\frac{w}{(w-1)^2}\,
\frac{dx}{x}\wedge\frac{dy}{y}\wedge\frac{dz}{z},
\qquad w=\chi^m,
\end{equation}
is the generator of $\omega(V\smallsetminus\Sb)\cong\cO$, unique up to
scale: the order of \eqref{eq:omegaM} along a boundary divisor $D_v$
is $\langle m,v\rangle-2\min(\langle m,v\rangle,0)-1=0$, and along
$\Sb$ it is $-2$.  Here uniqueness is algebraic.  If
$f\in\cO(V\smallsetminus\Sb)^\times$, then
$\operatorname{div}_V(f)=n\Sb$ for some $n\in\ZZ$; since
$\Pic(V)$ is torsion-free and $[\Sb]\neq0$, one has $n=0$, and a
rational function without zeros or poles on the projective variety
$V$ is constant.

Every sweep pair $(V,\Sb)$ of Section~\ref{sec:sweeps} with $V=\Bl_{pt}\PP^3$
or $V=(\PP^1)^3$ is isomorphic to the toric pair just described.  On
$\Bl_{pt}\PP^3$ the smooth members of $|H|=|2H_{\PP^3}-E|$ are the strict
transforms of the smooth quadrics through the blown-up point.  The group
$\mathrm{PGL}_4$ acts transitively on smooth quadrics, and the automorphism
group of a smooth quadric acts transitively on its points, so the
automorphisms of $\PP^3$ fixing the point act transitively on these members.
On $(\PP^1)^3$, with homogeneous coordinates $[x_0:x_1]$, $[y_0:y_1]$,
$[z_0:z_1]$ on the three factors (so that $x=x_1/x_0$, $y=y_1/y_0$,
$z=z_1/z_0$ above), a member of $|H|$ is the zero locus of a trilinear form
$z_0A_0(x,y)+z_1A_1(x,y)$ with $A_0,A_1$ bilinear.  It is smooth exactly when
the binary quadratic form $\det(\lambda A_0+\mu A_1)$ has nonzero
discriminant, Cayley's hyperdeterminant \cite[Chapter~14, \S\S1 and~4]{GKZ94}.  A linear
change of $(z_0,z_1)$ then moves the two roots to $\lambda=0$ and $\mu=0$, so
that $A_0$ and $A_1$ have rank one with invertible sum, and a linear change
of $x$ and of $y$ brings the form to $x_0y_0z_0-x_1y_1z_1$, the equation of
the closure of $\{\chi^m=1\}$.

\begin{lemma}\label{lem:D1}
In each of the two complements $\Mil=V\smallsetminus\Sb$ above there is a
closed $3$-cycle $T\subset \Mil$ with $\int_T\Omega_\Mil=\pm4\pi^2$.
Consequently $[\Omega_\Mil]\neq0$ in $H^3(\Mil;\CC)$, and since
$b_3(\Mil)=1$, the period pairing
$H_3(\Mil;\QQ)\to\CC$, $\nu\mapsto\int_\nu\Omega_\Mil$, is injective.
\end{lemma}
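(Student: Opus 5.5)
The plan is to build $T$ by hand inside the closure of the dense torus orbit, fibered over a path in the $w$-line. First I change coordinates on the torus $(\CC^*)^3\subset V$. For $(\PP^1)^3$, $m=(1,1,1)$ extends to a lattice basis $m,e_2^*,e_3^*$ of $N^\vee$. For $\Bl_{pt}\PP^3$, $m=(1,1,-1)$ likewise extends to a basis. So $(w,y,z)$, or a suitable pair of coordinates together with $w$, are coordinates on the torus. In them, \eqref{eq:omegaM} becomes
\[
\Omega_\Mil=\pm\frac{dw}{(w-1)^2}\wedge\frac{dy}{y}\wedge\frac{dz}{z}.
\]
The torus itself, and more generally any cycle fibered over a closed loop in the $w$-plane, gives period zero. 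In the torus case the constant term of $w/(w-1)^2=\sum n w^{\pm n}$ vanishes; in the general case $dw/(w-1)^2$ is exact away from $w=1$. The nonzero period must therefore come from a cycle fibered over an \emph{open} path $\gamma$ from $w=0$ to $w=\infty$ that avoids $w=1$, for instance $[0,\infty)$ with a small semicircle around $1$. Along such a path, $\int_\gamma dw/(w-1)^2=\bigl[-1/(w-1)\bigr]_0^\infty=-1$. If the fiber over each interior point of $\gamma$ is a real $2$-torus in the orbit $\{\chi^m=w\}\cong(\CC^*)^2$ with period $(2\pi i)^2$, Fubini gives $\int_T\Omega_\Mil=\pm(2\pi i)^2=\pm4\pi^2$.

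The key construction makes the fibered $3$-chain closed by collapsing a different circle of the $2$-torus at each end, so that $T$ is a join $S^1*S^1\cong S^3$. Concretely for $(\PP^1)^3$, I parametrize the fiber over $w=\gamma(t)$ as
\[
\{\,|x|=\rho_1(t),\ |y|=\rho_2(t),\ z=w/(xy)\,\}.
\]
As $w\to0$, I let $\rho_1\to0$ while keeping $|y|$ and $|z|=|w|/(\rho_1\rho_2)$ bounded away from $0$ and $\infty$ (for example $\rho_1=|w|$, $\rho_2=1$). The $x$-circle then collapses to the point $x=0$ of $\PP^1$. The limit circle lies in the divisor $\{x_1=0\}$ at points where $x_0y_0z_0\neq0$, hence off $\Sb=\{x_0y_0z_0=x_1y_1z_1\}$. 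As $w\to\infty$, I symmetrically let $\rho_2\to\infty$ with $x$ and $z$ finite and nonzero. The $y$-circle then collapses onto $\{y_0=0\}$, where $x_1y_1z_1\neq0$, again off $\Sb$. In the interior, $w\neq1$ on $\gamma$ keeps the fibers off $\Sb$. The resulting compact set is a topological $3$-sphere, a genuine cycle in $\Mil$. The form has no singularity along the collapsed loci: they lie in $\Mil$, where $\Omega_\Mil$ is holomorphic. On a fiber $dw=0$, so $dz/z=-dx/x-dy/y$ there and $\frac{dy}{y}\wedge\frac{dz}{z}$ restricts to $\frac{dx}{x}\wedge\frac{dy}{y}$, with period $(2\pi i)^2$ on the $(\arg x,\arg y)$-torus. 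This justifies the Fubini computation.

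For $\Bl_{pt}\PP^3$ with $w=xy/z$ I run the same recipe. I choose the ends so that the collapsing circle degenerates into a boundary divisor $D_v$ with $\langle m,v\rangle=\pm1$ of the correct sign. This means $x\to0$ (ray $e_1$, $\langle m,e_1\rangle=1$) as $w\to0$, and $z\to0$ (ray $e_3$, $\langle m,e_3\rangle=-1$) as $w\to\infty$. I then check in the corresponding affine toric charts that the limit circles avoid the closure $\Sb$ and the exceptional divisor's intersection with $\Sb$. I expect this chart-by-chart verification that $T$ stays in $\Mil$ and closes up to a cycle to be the main obstacle, since $\Sb$ passes through boundary strata and one must pick the collapsing coordinates so that the limiting circles hit strata where the defining binomial of $\Sb$ is a nonzero monomial.

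The consequences are then formal. A nonzero period of the closed form $\Omega_\Mil$ on a closed cycle forces $[\Omega_\Mil]\neq0$ in $H^3(\Mil;\CC)$. Since $b_3(\Mil)=1$ by the Betti numbers recorded in Section~\ref{sec:sweeps}, $H_3(\Mil;\QQ)$ is spanned by $[T]$. The linear map $\nu\mapsto\int_\nu\Omega_\Mil$ sends $[T]$ to $\pm4\pi^2\neq0$, and a linear functional on a line that is nonzero on a spanning vector is injective.
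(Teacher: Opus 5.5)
Your fibrewise period computation is correct, and the paper's middle piece does the same thing: on $T_2$ one has $w=pq^{-1}z=-r/(1-r)$, so $T_2$ is a family of $2$-tori over the negative real $w$-axis from $0$ to $\infty$, and the constancy of its pullback is your Fubini identity. The gap is in closing the chain at the two ends, which is the actual content of the lemma, and your recipe already fails for $(\PP^1)^3$. With $\rho_1=|w|$ and $\rho_2=1$, the fibre over $w$ is $x=|w|e^{i\alpha}$, $y=e^{i\beta}$, $z=e^{i(\arg w-\alpha-\beta)}$. As $w\to0$ it converges diffeomorphically onto the $2$-torus $\{x=0,\ |y|=|z|=1\}\subset D_{e_1}$, not onto a circle, because inside a fibre $\arg z$ must turn whenever $\arg x$ does. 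This failure is structural. Your fibres are orbits of the compact torus $m^\perp\otimes S^1$. A point in the open part of $D_v$ has stabilizer $\lambda_v(S^1)$, and this meets $m^\perp\otimes S^1$ trivially because $\langle m,v\rangle=\pm1$ on every ray. So no circle of a fibre can collapse at a generic point of a boundary divisor, at either end and for either $V$; your $\Bl_{pt}\PP^3$ prescription ($x\to0$, respectively $z\to0$) has the same defect. What you have built is $T^2\times[0,1]$ with two boundary tori, and its integral says nothing about $[\Omega_{\Mil}]$.

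A fibre circle can collapse only on a stratum whose cone spans a subspace meeting $m^\perp$, that is, on some $D_v\cap D_{v'}$ or at a fixed point. If $\langle m,v\rangle=-\langle m,v'\rangle$, that stratum lies in the base locus of the pencil $\{\chi^m=w\}$ and hence on $\Sb$; in the paper's chart, $\{p=q=0\}\subset\{t=0\}$. You must therefore degenerate toward a $2$-cone whose two rays have the same sign. For $(\PP^1)^3$, take $|x|=|y|=|w|^{1/2}$, $|z|=1$ near $w=0$, and $|x|=1$, $|y|=|z|=|w|^{1/2}$ near $w=\infty$. The circles $e_1-e_2$ and $e_2-e_3$ then collapse onto $\{x=y=0,\ |z|=1\}$ and $\{y=z=\infty,\ |x|=1\}$, both off $\Sb$, and the closure is your $S^3$. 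For $\Bl_{pt}\PP^3$ the cones $\langle e_1,e_2\rangle$ and $\langle e_3,r\rangle$ work; the closure is then a lens space, which serves equally well. The paper takes a different route: it keeps the end tori and caps them with solid tori $T_1\subset D_{v_+}$ and $T_3\subset D_{v_-}$, contracting through the same-sign strata $D_{v_+}\cap D_{n_0}$ and $D_{v_-}\cap D_{n_1}$. The form $\Omega_{\Mil}$ pulls back to zero on these caps, and $T_3$ is placed in the corner chart because the naive closure in $\Bl_{pt}\PP^3$ runs into a torus-fixed point on $\Sb$. With either repair, your formal deduction of injectivity from $b_3(\Mil)=1$ goes through. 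The paper proves $b_3(\Mil)=1$ from the pair sequence of $(V,\Mil)$ rather than citing it.
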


\begin{proof}
Choose an edge $\langle v_+,v_-\rangle$ of the fan with
$\langle m,v_\pm\rangle=\pm1$ and an adjacent maximal cone
$\langle v_+,v_-,n_0\rangle$ with $\langle m,n_0\rangle=+1$, such that the
maximal cone $\langle v_+,v_-,n_1\rangle$ on the other side of the cone
$\langle v_+,v_-\rangle$ has $\langle m,n_1\rangle=-1$.  This last condition
keeps the cap $T_3$ below off $\Sb$; without it the cap can meet $\Sb$.  The
two cases are
\begin{center}
\begin{tabular}{lccc}
\toprule
$V$ & $(v_+,v_-,n_0;n_1)$ & transition to the $n_1$ chart & $c$\\
\midrule
$(\PP^1)^3$ & $(e_1,-e_2,e_3;-e_3)$ & $(p,\,q,\,1/z)$ & $0$\\
$\Bl_{pt}\PP^3$ & $(e_1,r,e_2;e_3)$ & $(p/z,\,q/z,\,1/z)$ & $-1$\\
\bottomrule
\end{tabular}
\end{center}
where $c$ is the $z$-exponent of the transition monomial for $p'$.  In the
unimodular chart with coordinates $(p,q,z)$ dual to $(v_+,v_-,n_0)$ one has
$w=pq^{-1}z$, $\Sb=\{t=0\}$ with $t=q-pz$, and
$\Omega_\Mil=\pm\,dp\wedge dq\wedge dz/t^{2}$.
Fix $\varepsilon>0$, $R>1$ and set
\begin{align*}
T_1&=\{p=0,\ q=\varepsilon Re^{i\psi},\ |z|\le R\},\\
T_2&=\{p=r\varepsilon e^{i\theta},\
q=-(1-r)\varepsilon Re^{i(\theta+\varphi)},\
z=Re^{i\varphi}\},\qquad r\in[0,1],\\
T_3&=\{p'=\varepsilon R^{c}e^{i\psi},\ q'=0,\ |z'|\le 1/R\}
\quad\text{in the chart at the opposite corner }n_1,
\end{align*}
where $(p',q',z')$ are the dual coordinates of $(v_+,v_-,n_1)$ and $c$ is as in
the table.  The corner-chart cap is necessary: in the
$\Bl_{pt}\PP^3$ case the naive closure of
$\{|p|=\varepsilon,q=0,|z|\ge R\}$ collapses onto the torus-fixed
point of the corner chart, which \emph{lies on} $\Sb=\{p'=q'z'\}$;
the cap $T_3$ instead keeps $|p'-q'z'|=\varepsilon R^{c}>0$.  On
$T_2$, $t=-\varepsilon Re^{i(\theta+\varphi)}$ has
constant modulus; the $r=0$ boundary is the boundary torus of $T_1$
and the $r=1$ boundary maps under the transition onto
$\partial T_3$, so with orientations of the caps chosen to cancel
$\partial T_2$, $T=T_1+T_2+T_3$ is a closed $3$-cycle in $\Mil$.

The pullback of $\Omega_\Mil$ to $T_1$ and $T_3$ vanishes identically
($dp\mapsto0$, respectively $dq'\mapsto0$, while $\Omega_\Mil$ is
regular there), and on $T_2$ the pullback of
$dp\wedge dq\wedge dz/t^2$ is the \emph{constant} form
$dr\wedge d\theta\wedge d\varphi$: writing it out, the complex
Jacobian of the parametrization equals $t^2$ exactly.
Hence $\int_T\Omega_\Mil=\int_0^1\!\int_0^{2\pi}\!\int_0^{2\pi}
dr\,d\theta\,d\varphi=4\pi^2$ up to the orientation sign.

Finally $b_3(\Mil)=1$: from the pair sequence of $(V,\Mil)$ and
$H^3(V)=0$, $b_3(\Mil)$ is the corank of the restriction
$H^2(V;\QQ)\to H^2(\Sb;\QQ)$, which is one in both cases.
\end{proof}

\begin{lemma}\label{lem:D}
Let $p$ be a singular point of $X$ whose germ is the anticanonical
cone over $\dP_7$, or over $\dP_6$ with the induced local smoothing
through the one-dimensional component.  Then
$H_3(\Mil_p;\QQ)\;(\cong\QQ)\to H_3(X_s;\QQ)$ is injective for all small
$s\neq0$.
\end{lemma}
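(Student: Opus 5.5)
The plan is to copy the structure of the proof of Lemma~\ref{lem:P}: a class in $H_3(\Mil_p;\QQ)$ that becomes zero in $X_s$ has vanishing period against the closed form $\Omega_s$. So it suffices to show that the local period of $\Omega_s$ over a generator of $H_3(\Mil_p;\QQ)\cong\QQ$ is nonzero for every small $s\neq0$. Here $H_3(\Mil_p;\QQ)\cong\QQ$ holds because $b_3=1$ for both relevant sweep complements, by Lemma~\ref{lem:D1}.

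\emph{Step 1: identify the form.} By Section~\ref{sec:miniversal}, the smoothing arc classifies into the unique smoothing line $\{s_1=0\}$ (degree~$7$) or into the line component (degree~$6$). For $s\neq0$ the local fiber is a Milnor fiber of that component, and by Sections~\ref{sec:sweeps} and~\ref{sec:match} it is diffeomorphic to the sweep complement $V\smallsetminus\Sb$, with $V=V_7$ or $V=(\PP^1)^3$. I would go further and identify the analytic structure: the good representative $U_p\cap X_s$ is an open subset of the affine fiber of the sweep family. That affine fiber is $V\smallsetminus\Sb_{\lambda}$, where $\Sb_\lambda$ is a hyperplane section depending on the local parameter $\lambda(s)\neq0$. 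The transitivity argument preceding Lemma~\ref{lem:D1} then identifies $(V,\Sb_\lambda)$ with the toric pair.

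On this fiber, $\Omega_s|_{U_p}$ is a nowhere-vanishing holomorphic $3$-form. The uniqueness statement after~\eqref{eq:omegaM} shows that $\Omega_\Mil$ is the only such form up to a scalar on the full algebraic complement. On a Stein sub-piece, however, uniqueness only holds up to a holomorphic unit. I would handle this as in Lemma~\ref{lem:P}: write $\Omega=g\cdot\Omega^{\mathrm{rel}}$, where $\Omega^{\mathrm{rel}}$ is the relative generator of the sweep family. This generator restricts on each fiber to $c(\lambda)\,\Omega_\Mil$ with $c(\lambda)\neq0$, and $g$ is a unit with $g(p,0)\neq0$.

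\emph{Step 2: evaluate the period.} Take the explicit cycle $T$ of Lemma~\ref{lem:D1}. After the scaling of the cone (the $\CC^*$-action, weighting the parameter $\lambda$ positively), $T$ can be placed inside $U_p\cap X_s$ and shrinks to the vertex as $\lambda\to0$, just as $\delta_u=\sqrt u\,S^3$ did. Homogeneity of $\Omega^{\mathrm{rel}}$ under the action gives
\[
\int_T \Omega^{\mathrm{rel}}=\pm4\pi^2\,\lambda^{k}
\]
for some weight $k$. Expanding $g=g(p,s)+O(\text{distance})$ then gives a period of the form $\lambda^k\bigl(\pm4\pi^2 g(p,s)+o(1)\bigr)$, which is nonzero whenever $\lambda(s)\neq0$. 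Since $T$ generates $H_3(\Mil_p;\QQ)$, every nonzero class has nonzero period, and so it cannot die in $H_3(X_s;\QQ)$.

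\emph{Main obstacle.} The hardest part is Step~1's passage from the diffeomorphism type to an honest holomorphic model. One needs the family near $p$ to be a pullback of the sweep family, so that the form is a unit times the homogeneous relative form. One also needs $T$, with its corner cap $T_3$, to sit inside the good representative uniformly in $s$. I would obtain the pullback from miniversality, namely Altmann's family restricted to the component, together with the explicit cone description of the total space over the component (cone over $(\PP^1)^3$, resp.\ over $V_7$). The weight $k$ would come from the degree of the Gorenstein character.
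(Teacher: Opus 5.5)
Your proposal is correct and is essentially the paper's proof. It pulls the local family back from Pinkham's $\CC^*$-equivariant swept family on the one-dimensional component and writes $\Omega=g\cdot\Omega^{\mathrm{loc}}$ with $\Omega^{\mathrm{loc}}$ homogeneous and $g(p,0)\neq0$. It then transports the cycle of Lemma~\ref{lem:D1} by the $\CC^*$-action so that it shrinks to $p$, lets homogeneity factor one monomial in the parameter out of both the main term and the error, and concludes with $b_3(\Mil)=1$. Your direct estimate for small $s$ is equivalent to the paper's argument along a sequence $s\to0$.

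The one step you assert without proof is that a homogeneous generator of the relative dualizing sheaf generates along the entire fiber, so that its restriction is a constant multiple of $\Omega_\Mil$. The paper supplies this via graded Nakayama together with the fact that the vertex lies in the closure of every $\CC^*$-orbit. Note also that the divisor at infinity $\Sb$ is fixed in the swept family rather than varying with $\lambda$.
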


\begin{proof}
By Section~\ref{sec:miniversal} the induced arc lies in a single
reduced component $c$ of the miniversal base of the germ, off the
local discriminant for $s\neq0$, and by Section~\ref{sec:sweeps} the
Milnor fiber is the corresponding sweep complement, which for the two
branches in the statement is isomorphic to one of the complements of
Lemma~\ref{lem:D1} by the paragraph preceding that lemma.  The component carries Pinkham's
$\CC^*$-equivariant swept family with parameters of weight one
(Section~\ref{sec:miniversal}: $T^1$ sits in the Gorenstein degree);
let $\Omega^{\mathrm{loc}}$ be an equivariant algebraic generator of
its relative dualizing sheaf, homogeneous of some integral weight
$a$.  To construct it, work in the graded local ring at the vertex.
The relative dualizing module is invertible, and graded Nakayama
selects from any algebraic local generator a homogeneous component
that still generates.  Call $U$ the open locus on which that homogeneous
generator generates; $U$ is $\CC^*$-stable, being the nonvanishing locus of a
homogeneous section, and it contains the vertex.  Then $U$ contains the whole
fiber $b=1$.  Let $x$ be a point of that fiber.  Scaling the positive-weight
cone coordinates and the weight-one parameter to zero carries $x$ to the vertex,
so the vertex lies in the closure of $\CC^*\cdot x$; since $U$ is open and
contains the vertex, some $\lambda\cdot x$ lies in $U$, and since $U$ is
$\CC^*$-stable, $x$ does.  Thus the restriction to $b=1$ is an
algebraic
generator of $\omega_\Mil$, hence a constant multiple of $\Omega_\Mil$ by
the unit calculation following \eqref{eq:omegaM}; rescale it so that
$\Omega^{\mathrm{loc}}|_{b=1}=\Omega_\Mil$.
For a horizontal family $\nu$ of classes in $H_3(\Mil;\QQ)$,
\[
\int_{\nu(b)}\Omega^{\mathrm{loc}}_b\;=\;c(\nu)\cdot\chi(b),
\]
with $\chi$ the weight-$a$ monomial in the component parameter and
$c(\nu)=\int_\nu\Omega_\Mil$: single-valuedness holds because the
$\CC^*$-action is a group action on the total space with parameter
weight one, so transport once around the base circle is the identity;
and $c(\cdot)$ is linear.  Writing
$\Omega=g\cdot\Omega^{\mathrm{loc}}$ near $p$ with $g$ a unit as in
Section~\ref{sec:relform}, let $b=b(s)$ be the actual parameter of the
component.  The cycle obtained from the fixed compact cycle of
Lemma~\ref{lem:D1} by the $\CC^*$-action lies in the local Milnor
piece and converges uniformly to $p$, because all cone coordinates
have positive weights.  Hence $g|_{\nu(s)}=g(p,0)+o(1)$ uniformly,
while homogeneity factors the same monomial $\chi(b(s))$ from the main
term and the error.  The global period is therefore
\[
\int_{\nu(s)}\Omega_s
=\chi(b(s))\bigl(c(\nu)\,g(p,0)+o(1)\bigr).
\]

If $\nu\neq0$ in $H_3(\Mil;\QQ)$ had $\nu_*=0$ in $H_3(X_s;\QQ)$ for
arbitrarily small $s\neq0$, then a rational multiple of $\nu(s)$
would bound and the closed form $\Omega_s$ would have zero period,
while $\chi(b(s))\neq0$ off the discriminant; letting $s\to0$ along such
values gives $c(\nu)\,g(p,0)=0$ and, since $g(p,0)\neq0$,
$c(\nu)=0$, contradicting the injectivity of the period pairing in
Lemma~\ref{lem:D1}.  (That $H_3(\Mil;\QQ)$ has rank one is what converts
this class-by-class exclusion into injectivity for all small $s$; a
higher-rank branch would require a uniform argument.)
\end{proof}

\section{Surgery membership}\label{sec:surgery}

\subsection{The decomposition}\label{sec:decomp}
Let $p_1,\dots,p_r$ be the singular points of $X$, with disjoint
closed cone neighborhoods $U_p$ chosen inside Milnor balls, and
$W=X\smallsetminus\bigcup_p\operatorname{int}U_p$, so
$\partial W=\bigsqcup_pL_p$.  Good representatives exist for
arbitrary isolated singularities
(Greuel~\cite[Definition~2.12, Lemma~2.13, Theorem~2.15]{Greuel19}),
and for small $s$ the family is smooth, hence $C^\infty$-trivial, over
a neighborhood of $W$; therefore
\[
X_s\ \cong\ W\ \cup_{\bigsqcup L_p}\ \bigsqcup_p\Mil_p,
\qquad
\Xh\ \cong\ W\ \cup_{\bigsqcup L_p}\ \bigsqcup_pN_p,
\]
where $\Mil_p$ is the Milnor fiber of the induced local smoothing and
$N_p$ a neighborhood of the exceptional set of the fixed crepant
resolution.  Both gluings are along the same links.  Only the
$C^\infty$-triviality over a neighborhood of $W$ is used here, so the
same decomposition of $X_s$ along $\bigsqcup_pL_p$ holds for an
arbitrary one-parameter deformation, with $X_s\cap U_p$ in place of
$\Mil_p$ at any point whose germ is not smoothed; this is the form
Section~\ref{sec:partialmem} uses.

\begin{lemma}[membership]\label{lem:M}
For every $\gamma\in H_3(X_s;\QQ)$, the Mayer--Vietoris boundary
$\partial\gamma\in\bigoplus_pH_2(L_p;\QQ)$ lies in the space $K(X)$
of Theorem~\ref{thm:necessity-intro}: its components die in every
$H_2(\Mil_p;\QQ)$, and their sum dies in $H_2(W;\QQ)$, hence in
$H_2(\Xh;\QQ)$.  Moreover, at a node $q$ the $L_q$-component of
$\partial\gamma$ is a fixed nonzero multiple of
$(\gamma\cdot\delta_q)$ times the generator, and at a cone point $p$
the boundary map identifies
$H_3(\Mil_p,L_p;\QQ)\xrightarrow{\ \sim\ }\Rsm_p$.
\end{lemma}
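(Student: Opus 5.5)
The plan is to read everything off the Mayer--Vietoris sequence of the decomposition $X_s\cong W\cup_{\bigsqcup L_p}\bigsqcup_p\Mil_p$ of Section~\ref{sec:decomp}. The relevant segment is
\[
H_3(X_s)\xrightarrow{\ \partial\ }\bigoplus_pH_2(L_p)\longrightarrow H_2(W)\oplus\bigoplus_pH_2(\Mil_p)\longrightarrow H_2(X_s).
\]
Exactness at $\bigoplus_pH_2(L_p)$ gives both vanishing statements for $\partial\gamma$ at once. Each component $\partial\gamma_p$ maps to zero in $H_2(\Mil_p)$, so it lies in $\Rsm_p$; at a node this is automatic, since $H_2(\Mil_q)=0$. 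The sum of the components maps to zero in $H_2(W)$.

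To pass from $W$ to $\Xh$, I will use that the inclusion $L_p\subset\Xh$ factors through $W\subset\Xh$, because the second gluing in Section~\ref{sec:decomp} is along the same links. Hence $\sum_p\partial\gamma_p$ already dies in $H_2(W)$ and therefore in $H_2(\Xh)$. This shows $\partial\gamma\in K(X)$. Concretely, $\partial\gamma$ is obtained by cutting a representative cycle transversally along the links and taking the boundary of its $W$-part (equivalently, of its $\Mil_p$-parts, up to sign).

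For the node formula, I take $\Mil_q\simeq T^*S^3$. Then $H_3(\Mil_q,L_q)\cong H^3(\Mil_q)\cong\QQ$ by Lefschetz duality, and this group is generated by a fiber disk $D$ of $T^*S^3$, which meets the zero section $\delta_q$ once. The boundary $\partial D$ is the fiber $S^2$ of $L_q\cong S^2\times S^3\to S^3$, and it generates $H_2(L_q)$.

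The $L_q$-component of $\partial\gamma$ equals the boundary of the relative class $[\gamma\cap\Mil_q]\in H_3(\Mil_q,L_q)$. In the basis $[D]$ this relative class is $(\gamma\cdot\delta_q)[D]$: the intersection pairing $H_3(\Mil_q,L_q)\times H_3(\Mil_q)\to\QQ$ is perfect, and $\delta_q$ is supported in $\Mil_q$, so $\gamma\cdot\delta_q=[\gamma\cap\Mil_q]\cdot\delta_q$. Applying the boundary, the $L_q$-component is $\pm(\gamma\cdot\delta_q)[\partial D]$. The sign depends only on orientation conventions, so it is a fixed nonzero multiple of $\gamma\cdot\delta_q$ times the generator.

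For the cone-point identification, I use the long exact sequence of the pair $(\Mil_p,L_p)$:
\[
H_3(\Mil_p)\to H_3(\Mil_p,L_p)\xrightarrow{\ \partial\ }H_2(L_p)\to H_2(\Mil_p).
\]
The image of $\partial$ is $\Rsm_p$ by definition, so the identification reduces to showing that $H_3(\Mil_p)\to H_3(\Mil_p,L_p)$ is zero. Here I plan to apply Lefschetz duality, which identifies this map with $H^3_c(\Mil_p)\to H^3(\Mil_p)$. By Poincaré duality on $V$ this is the composite $H_3(V,\Sb)\cong H^3(\Mil_p)\to\ldots$, and under the identifications of Proposition~\ref{prop:kernels} it factors through $H^3(V)=0$ (respectively through $H_3(V)=0$), using the pair sequence for $(V,\Mil)$ already appearing in the proof of Lemma~\ref{lem:D1}.

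As a cross-check, I will compare ranks. With $b_3(\Mil)=1$, $b_3(V)=0$ and the Thom isomorphism $H_k(V,\Mil)\cong H_{k-2}(\Sb)$, one gets $\dim H_3(\Mil,L)=b^3_c(\Mil)=b_3(\Mil)$, which should equal $\dim\Rsm_p$. This checks out in the $V_7$ and $(\PP^1)^3$ cases, where both sides are one-dimensional.

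I expect the main obstacle to be exactly this vanishing of $H_3(\Mil_p)\to H_3(\Mil_p,L_p)$, that is, showing that the intersection form on $H_3(\Mil_p)$ is degenerate. For the $W_6$ branch the rank count does not work: there $b_3(\Mil)=2=\dim A_2$, and the same argument should go through with the ranks matching, but it needs to be verified. I will try first to prove that the intersection form on $H_3(\Mil_p)$ vanishes identically. In odd real dimension three it is skew-symmetric, so for $b_3=1$ it vanishes trivially. For $b_3=2$ I would deduce vanishing from the fact that $H_3(\Mil)\to H_3(V)=0$, so every cycle bounds in $V$, together with a compatibility between the intersection pairing and this factorization.
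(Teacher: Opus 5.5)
Your proof is correct. The Mayer--Vietoris step and the node formula are the paper's, but at the cone points you take a different route.

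The paper proves that $\partial\colon H_3(\Mil_p,L_p;\QQ)\to\Rsm_p$ is injective by counting. Exactness makes $\partial$ surjective onto $\Rsm_p$, Lefschetz duality gives $\dim H_3(\Mil_p,L_p)=b_3(\Mil_p)$, which is $1$, $1$, $2$ in the three branches, and Proposition~\ref{prop:kernels} gives the same values for $\dim\Rsm_p$. The vanishing of $H_3(\Mil_p)\to H_3(\Mil_p,L_p)$ and of the skew form then follow as consequences. Since $\dim\Rsm_p$ is computed on the sweep pair, this count relies on the manifold-with-boundary comparison of Lemma~\ref{lem:boundary} in the $W_6$ branch; the paper notes that skew-symmetry suffices when $b_3=1$.

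You instead prove the vanishing directly. The map $H_3(\Mil_p)\to H_3(\Mil_p,L_p)$ is adjoint to the intersection form on $H_3(\Mil_p)$. That form is zero because intersection numbers of cycles supported in $\Mil_p\subset V$ can be computed in $V$, where $H_3(V;\QQ)=0$. Dually, $H^3_c(\Mil_p)\to H^3(\Mil_p)$ is extension by zero followed by restriction, so it factors through $H^3(V)=0$. This argument is uniform in the branch. It needs only $b_3(V)=0$ for $V_7$, $(\PP^1)^3$ and $W_6$, together with the identification of the \emph{open} Milnor fiber with $V\smallsetminus\Sb$. So for this lemma you need neither Lemma~\ref{lem:boundary} nor the rank computation of Proposition~\ref{prop:kernels}. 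The paper's count, in exchange, is a one-line argument once those results are available, and they are needed later anyway for the root-subspace identifications.

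Three points in your write-up should be corrected.
\begin{itemize}
\item Your factorization already handles the $W_6$ branch, so the hedging in your last paragraph is unnecessary. The \emph{compatibility} you ask for is just the locality of intersection numbers described above.
\item The rank count does not fail for $W_6$: there $b_3(\Mil)=2=\dim A_2$. That count, combined with the pair comparison, is exactly the paper's proof.
\item Lefschetz duality gives $H_3(\Mil,L)\cong H^3(\Mil)$, not $H^3_c(\Mil)$; only the dimensions agree.
\end{itemize}
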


\begin{proof}
The first statement is Mayer--Vietoris exactness for
$X_s=W\cup\bigsqcup \Mil_p$, together with $W\subset\Xh$.  At a node,
$\Mil\simeq T^*S^3$: $H_3(\Mil,L;\QQ)\cong H^3(\Mil;\QQ)=\QQ$ is generated by
a cotangent fiber $F$ with $\partial F$ the $S^2$-generator of
$H_2(S^2\times S^3)$ and $F\cdot\delta=1$; excision identifies the
$L_q$-component of $\partial\gamma$ with
$\partial$ of the image of $\gamma$ in $H_3(\Mil_q,L_q)$, which equals
$(\gamma\cdot\delta_q)F$ by the perfect pairing.  At a cone point, exactness of the pair sequence makes the image of
$\partial\colon H_3(\Mil,L;\QQ)\to H_2(L;\QQ)$ equal to
$\ker(H_2(L;\QQ)\to H_2(\Mil;\QQ))=\Rsm_p$, while Lefschetz duality gives
$\dim H_3(\Mil,L;\QQ)=b_3(\Mil)$, which by Section~\ref{sec:sweeps} equals
$1$, $1$, $2$ in the three branches, the rank of $\Rsm_p$ computed in
Proposition~\ref{prop:kernels}.  (The comparison of the actual Milnor fiber
with the sweep complement as a manifold with boundary, which this uses, is
Lemma~\ref{lem:boundary} below.  When $b_3(\Mil)=1$, injectivity of
$\partial$ needs no comparison: the map $H_3(\Mil)\to H_3(\Mil,L)$ is
Lefschetz dual to a skew form on a line, hence zero.)  A surjection between
spaces of equal finite dimension is an isomorphism, so $\partial$ is
injective with image $\Rsm_p$; equivalently $H_3(\Mil)\to H_3(\Mil,L)$ is
zero, $H_3(L)\to H_3(\Mil)$ is surjective, and the skew intersection form of
$H_3(\Mil;\QQ)$ vanishes.
\end{proof}

\subsection{Boundary compatibility}\label{sec:boundary}
Lemma~\ref{lem:M} uses the subspaces
$\Rsm_p\subset H_2(L_p)$ through the \emph{sweep-side} description of
Proposition~\ref{prop:kernels} (the unit normal circle bundle of
$\Sb\subset V$), while the map to $H_2(\Xh)$ and all coordinates in
Section~\ref{sec:example} use the \emph{resolution-side} description
(the projection $L_p\to E_p$ of the cone link onto the exceptional del Pezzo,
the circle-bundle projection recalled after Lemma~\ref{lem:boundary}).  These are a
priori different identifications of $H_2(L_p;\QQ)$ with
$K^{\perp}$, and an arbitrary boundary diffeomorphism need not preserve
either marking.  Their compatibility is therefore a statement, not a
formality.  It appears not to be available in the literature:
the identification of the germ Milnor fiber with the sweep complement
is standard as \emph{open} manifolds
(\cite[Proposition~4.2]{PP15}), but the manifold-with-boundary form,
compatible with the canonical link identification, is what the
following lemma supplies.

\begin{lemma}[boundary compatibility]\label{lem:boundary}
Let $(X,p)$ be analytically isomorphic to the anticanonical cone over
the del Pezzo surface $S$, smoothed by a one-parameter family through
the component $c$ with sweep $V\supset\Sb$.  Then the boundary
identification $\partial \Mil_p\cong L_p$ arising from the surgery
decomposition is such that the sweep-side projection
$H_2(L_p;\QQ)\to H_2(\Sb;\QQ)$ and the circle-bundle projection
$H_2(L_p;\QQ)\to H_2(E_p;\QQ)$ agree after composing the former with the
isometry $H_2(\Sb;\ZZ)\to H_2(E_p;\ZZ)$ induced by an isomorphism
$\Sb\cong E_p$, which carries $K_{\Sb}$ to $K_{E_p}$.  Consequently $\Rsm_p$,
read in resolution-side coordinates, is the root subspace of
Proposition~\ref{prop:kernels}.
\end{lemma}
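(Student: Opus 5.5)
We work in the $\CC^*$-equivariant model of the smoothing. By Section~\ref{sec:miniversal} the induced arc lies in a single reduced component $c$. Pinkham's swept family over $c$ is the affine cone $\widehat C(V)$ over $V$ with respect to $H$, fibred by the weight-one function whose zero fiber is $C(\Sb)\cong C(S)$. Its fibers over $b\neq0$ are $V\smallsetminus\Sb$. The key observation is that the projective completion $\overline{\cX}_c=\PP(\cO_V\oplus\cO_V(H))$, restricted over the pencil, exhibits each fiber as a compactification. In this compactification the ``divisor at infinity'' of the fiber over $b$ is $\Sb$ for $b\neq0$, and for $b=0$ it is the section at infinity of the cone, again a copy of $\Sb$. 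The whole family of divisors at infinity is therefore the trivial family $\Sb\times\Disk$. We then take a tubular neighborhood $\mathcal T$ of $\Sb\times\Disk$ in the total space. Its boundary is an $S^1$-bundle over $\Sb\times\Disk$, trivial along $\Disk$, with fiber the circle bundle of $\cO_{\Sb}(\Sb)=\cO_{\Sb}(-K_{\Sb})$.

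First, we show that the good representative can be chosen as the complement of $\mathcal T$, up to an isotopy. Concretely, we use the $\CC^*$-invariant ``norm'' $\rho$ built from the weighted cone coordinates; all of these have positive weight. The sublevel set $\{\rho\le\varepsilon\}$ in the fiber over $b$ is the complement of a neighborhood of infinity. Its boundary $\{\rho=\varepsilon\}$ is transverse to the fibers for $|b|\ll\varepsilon$, by the usual weighted-homogeneous Milnor argument: the Euler vector field is transverse to the level sets. Hence the boundaries $\{\rho=\varepsilon\}\cap X_b$ form a trivial bundle over the small disk. This trivialization is exactly the one used in Section~\ref{sec:decomp} to identify $\partial\Mil_p$ with $L_p=\{\rho=\varepsilon\}\cap X_0$. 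By the $\CC^*$-action, $\{\rho=\varepsilon\}\cap X_b$ is isotopic, within the complement of the zero section, to the boundary of a tube around the divisor at infinity.

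Second, we compare the two projections at $b=0$. On $X_0=C(S)$ the link $L_p$ is the circle bundle of $\cO_S(K_S)$, projecting to $S$. Under the isomorphism from the blowup of the vertex, this projection is the circle-bundle projection to $E_p$. Viewed from the divisor at infinity, the same link is the circle bundle of $\cO(-K_S)$, the normal bundle of the section at infinity, projecting to $\Sb_\infty$. Both projections are the radial $\CC^*$-quotient $C(S)\smallsetminus\{0\}\to S$, so they coincide after the tautological identification $E_p\cong S\cong\Sb_\infty$. The orientation reversal between $K_S$ and $-K_S$ affects only the fiber orientation, not the map on $H_2$ after applying the Gysin identification.

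Third, we transport to $b\neq0$. The projection of the tube boundary onto $\Sb\times\{b\}$ varies continuously in $b$, and $\Sb\times\Disk$ is a product, so the induced maps $H_2(L_p)\to H_2(\Sb)$ are constant in $b$ under the trivialization of the first step. At $b\neq0$ this is the sweep-side projection of Proposition~\ref{prop:kernels}. The composite isomorphism $\Sb\cong E_p$ preserves canonical classes because it is a biholomorphism. The final assertion, that $\Rsm_p$ is the root subspace of Proposition~\ref{prop:kernels} read in resolution-side coordinates, then follows from Proposition~\ref{prop:kernels}.

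There is one remaining point: the actual smoothing $\cX$ is only analytically equivalent near $p$ to a pullback of the equivariant family. The boundary identification from an analytic isomorphism of germs is isotopic to the one obtained above, because both respect the $C^\infty$-trivialization over the collar.

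The main obstacle is the first step: making the transversality and isotopy uniform in $b$, so that the good-representative boundary and the tube boundary around infinity are related by an isotopy through fibrewise-transverse hypersurfaces. If this becomes delicate, I would first try to interpolate with a radial function $\rho^a|b|^{-1}$, which is invariant under the $\CC^*$-action and has no critical points off the zero section.
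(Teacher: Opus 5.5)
Your outline matches the paper's: the fiberwise projective closure of the equivariant swept family, with the product $\Sb\times B$ as divisor at infinity; the weighted radius $\rho$, with transversality from the Euler field; and the radial quotient $C(S)\smallsetminus\{0\}\to S$ as the projection on the central fiber. \textbf{The gap is the step you flag as the main obstacle.} It is the whole content of the lemma, and the proposal does not prove it. Rescaling by the $\CC^*$-action only carries $\{\rho=\varepsilon\}\cap X_b$ to $\{\rho=\varepsilon/|b|\}$ in a fixed fiber. That does not show this level set is a tube boundary around $\Sb$ compatible with the tube projection. Nor does it show the identification stays continuous as $b\to0$, which is exactly where the marking is read off. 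Your fallback $\rho^a|b|^{-1}$ is undefined on the central fiber, so it cannot supply that continuity.

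The paper fills the step with the asymptotics of $\rho$ at infinity. On the real oriented blowup along $S$, with normal coordinate $q$, one has $z_i=q^{-w_i}(a_i(y,b)+O(q))$ with the $a_i$ not all zero. Hence $\rho=|q|^{-1}A(y,\arg q,|q|,b)$ with $A>0$, and the level sets are radial graphs over the unit normal circle bundle. The implicit function theorem then gives a radius $r^*(\tau,y,\theta)$ smooth up to $\tau=0$. This yields a trivialization $\Psi_\tau$ along the ray that commutes with the projection to $S$ and equals the circle-bundle projection at $\tau=0$.

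In your fixed-tube setting a shorter repair also works. The closed total space is smooth and submersive near the part of the central fiber with $\rho\ge\varepsilon$, including the divisor at infinity, and nearby fibers are transverse to $\{\rho=\varepsilon\}$. Apply Ehresmann to the compact region $\{\rho\ge\varepsilon\}$, with a vector field tangent to $\Sb\times B$ and horizontal along it. This identifies $V\smallsetminus\operatorname{int}\Mil_b$, relative to $\Sb$, with the central region. That region is the closed disk bundle of $\cO_S(-K_S)$ over $S_\infty$, and its retraction $[x_0:z]\mapsto[z]$ restricts on the boundary to the circle-bundle projection. One of these two arguments has to be written out.

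Two further points need repair.
\begin{itemize}
\item Your family is a pencil: the affine cone over $V$ fibred by one weight-one function. That covers the whole component only on the one-dimensional branches. On the two-dimensional $\dP_6$ component the total space is the cone over $\PP^2\times\PP^2$ fibred over a plane; the cone over $W_6$ is only its restriction to a line, and the classifying arc need not lie on a line.
\item The trivialization of Section~\ref{sec:decomp} is an arbitrary $C^\infty$-trivialization of the actual family, not ``exactly'' the $\rho$-trivialization. The sentence ``both respect the $C^\infty$-trivialization over the collar'' does not compare them.
\end{itemize}
The paper settles both at once. The bundle $\{\rho=\varepsilon\}$ is locally trivial over the whole component $B_c$, discriminant included, because the relative singular locus stays in $\{\rho<\varepsilon\}$ and transversality is open. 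Consequently the pullback along the classifying arc of a fixed trivialization, the surgery trivialization, and the radial transport along the ray through the arc all differ by diffeomorphisms isotopic to the identity, so they agree on $H_2$.

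Two minor corrections. First, $\rho$ is $S^1$-invariant and homogeneous of degree one, not $\CC^*$-invariant. Second, the closure over a pencil is not the $\PP^1$-bundle $\PP(\cO_V\oplus\cO_V(H))$. It is the blowup of the projective cone over $V$ along the base locus $\Sb_\infty$ of the pencil, minus the fiber over $\infty$. Your conclusion that the divisors at infinity form $\Sb\times\Disk$ is nonetheless correct.
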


Here the germ-local crepant resolution of $C(S)$ is the blowup of the
vertex, its exceptional divisor is the zero section $E_p\cong S$ of the
total space of $\cO_S(K_S)$, the link $L_p$ is the unit circle bundle of
that line bundle, and the \emph{circle-bundle projection} is the bundle
projection $L_p\to E_p$; this is the identification used in
\eqref{eq:linkH2}.  This local form is part of the standing hypotheses and
is not automatic for a crepant resolution: a triangulation of a pentagon or
hexagon that cuts off a unimodular corner gives a toric crepant resolution
with a smaller toric surface over $p$ together with an exceptional curve.

\begin{proof}
Let $\mathcal Y_c\to B_c$ be the $\CC^*$-equivariant
Altmann--Pinkham family on the reduced component $c$.  All component
parameters have weight one and all affine cone coordinates have positive
weights, so an equivariant closed embedding
$\mathcal Y_c\hookrightarrow\AAf^N\times B_c$ with fiber coordinate weights
$w_1,\dots,w_k>0$ exists, and we let $\overline{\mathcal Y}_c\to B_c$ be the
fiberwise closure of $\mathcal Y_c$ in
$\PP(1,w_1,\dots,w_k)\times B_c$.  (For the del Pezzo surfaces of degrees~$6$
and~$7$ the anticanonical ring is generated in degree one, so all $w_i$ may be
taken equal to~$1$ and the ambient space is $\PP^k\times B_c$; general weights
are kept only for uniformity of notation.)  This is Pinkham's sweeping out of
the cone \cite[(4.2),\,(5.1)]{Pinkham74} carried out in families over $B_c$:
the divisor at infinity is cut by the homogenizing coordinate, on which
$\CC^*$ acts trivially, and is
$\Proj$ of the graded coordinate ring of the cone, hence the fixed family
$\Sb\times B_c$, independent of $b$ because the parameters have positive
weight.  For $b$ off the discriminant the pair
$(\overline{\mathcal Y}_b,\Sb)$ is the polarized sweep pair $(V,\Sb)$ of
Proposition~\ref{prop:kernels}, which is where its smoothness comes from.  Choose an equivariant affine embedding,
write $w_1,\dots,w_k>0$ for the coordinate weights and, for a common
multiple $d$ of them, use the weighted radius
\[
 \rho(z)=\left(\sum_{i=1}^k|z_i|^{2d/w_i}\right)^{1/(2d)},
 \qquad \rho(r\cdot z)=r\rho(z)\quad(r>0).
\]
Fix a sufficiently small weighted Milnor radius $\varepsilon$ and a
small sphere of radius $\eta$ in $B_c$.  Every sufficiently small
parameter $\beta\in c$ off the discriminant is uniquely
$\beta=t b$ with $t>0$ and $\lVert b\rVert=\eta$.  The discriminant is
homogeneous, so $b$ is also off it.  The action by $t^{-1}$ gives a
diffeomorphism of pairs
\begin{equation}\label{eq:compactcore}
 \left(\mathcal Y_\beta\cap\{\rho\leq\varepsilon\},
       \mathcal Y_\beta\cap\{\rho=\varepsilon\}\right)
 \ \cong\
 \left(K_{t,b},\Sigma_{t,b}\right),
 \quad
 K_{t,b}:=\mathcal Y_b\cap\{\rho\leq\varepsilon/t\}.
\end{equation}
The pair on the left is the actual compact Milnor fiber and its
boundary, not merely its interior.

We next identify the pair on the right with the sweep compact core.
Near the fixed divisor $S\subset\overline{\mathcal Y}_b$, choose a
normal coordinate $q$, and pass to the real oriented blowup along
$S$.  An affine coordinate of weight $w_i$ has the form
\[
 z_i=q^{-w_i}\bigl(a_i(y,b)+O(q)\bigr),
\]
where $y\in S$ and the leading coefficients do not vanish
simultaneously.  Consequently
\begin{equation}\label{eq:radiusatinfinity}
 \rho=|q|^{-1}A(y,\arg q,|q|,b)
\end{equation}
with $A$ smooth and strictly positive on the oriented boundary.  For
$t$ small, the equation $\rho=\varepsilon/t$ therefore has a unique
solution for $|q|$ and is a smooth radial graph over the unit normal
circle bundle of $S$.  Normal radial flow identifies this graph with
the boundary of a fixed tube of $S$, preserving the projection to
$S$, and extends across the intervening collar.  Thus
\eqref{eq:compactcore} continues to a diffeomorphism of pairs
\[
 (\Mil_p,\partial \Mil_p)\ \cong\
 \bigl(\overline{\mathcal Y}_b\smallsetminus
       \operatorname{int}\mathrm{Tube}(S),
       \partial\mathrm{Tube}(S)\bigr)
\]
that carries the actual boundary inclusion to the tube-boundary
inclusion.

By the sweep description of Section~\ref{sec:sweeps}, the compactified
smooth pair $(\overline{\mathcal Y}_b,S)$ is the corresponding
polarized sweep pair $(V,\Sb)$; along a connected smooth branch the
pair identifications may be transported by Ehresmann, using a
trivializing vector field that is tangent to the fixed family
$S\times B_c$ and horizontal along it, so that $S$ is fixed pointwise.

It remains to identify the marking of that boundary.  Apply the same
construction to the radial family $\mathcal Y_{\tau b}$ for
$0<\tau\leq t$.  On the real oriented blowup write $r=|q|$ and
$\theta=\arg q$.  By~\eqref{eq:radiusatinfinity} the boundary of
$K_{\tau,b}$ is $\{r=(\tau/\varepsilon)A(y,\theta,r,b)\}$, and the implicit
function theorem gives a unique solution
$r^*(\tau,y,\theta)=(\tau/\varepsilon)A(y,\theta,0,b)+O(\tau^2)$, smooth up to
$\tau=0$.  Let $\Psi_\tau(y,\theta)\in\mathcal Y_{\tau b}\cap\{\rho=\varepsilon\}$
be the image under the action of $\tau$ of the point with coordinates
$(y,\theta,r^*)$.  Its affine coordinates are
$(\tau/r^*)^{w_i}e^{-i\theta w_i}\bigl(a_i(y,b)+O(r^*)\bigr)$, which converge
smoothly as $\tau\to0$ to a point of the link of the central cone.  Thus
$\Psi$ trivializes the boundary bundle over the segment $[0,t]\cdot b$, and
for every $\tau$ the tube projection composed with $\Psi_\tau$ is
$(y,\theta)\mapsto y$.  At $\tau=0$ the projection sends a nonzero point
of the central cone to its projective class in $S$, while the normal
circle is its $S^1$-orbit.  It is therefore exactly the circle-bundle
projection $L_p\to S$ of the cone link.

It remains to compare this radial transport with the identification
$\partial \Mil_p\cong L_p$ that Section~\ref{sec:decomp} fixes, which is not a
free choice.  The two live over different parameter spaces, so name the map
between them: let $\beta\colon(\Disk,0)\to(B_c,0)$ classify the induced
local deformation, so that after shrinking the local family is
$\beta^{*}\mathcal Y_c$, and take the good representative $U_p$ of
Section~\ref{sec:decomp} with $\partial U_p$ corresponding to
$\{\rho=\varepsilon\}$.  The comparison is in three steps: the boundary is a
locally trivial bundle over the whole base, discriminant included; two
trivializations of it over the disc induce the same map on $H_2$; and the
radial transport is one such trivialization.

First, $\{\rho=\varepsilon\}\cap\mathcal Y_b$, for $b\in B_c$, is a locally
trivial $C^\infty$ bundle over all of $B_c$, discriminant included.  The
relative singular locus is proper over the compact ball
$\{\rho\le\varepsilon\}$ and meets the central fiber only at the vertex, so
for small $b$ it stays inside $\{\rho<\varepsilon\}$; the total space is
smooth near $\{\rho=\varepsilon\}$; and $\mathcal Y_b$ meets
$\{\rho=\varepsilon\}$ transversally, which holds at $b=0$ by
$\rho(r\cdot z)=r\rho(z)$ and is open in $b$ by compactness.  Ehresmann
applies.

Second, fix a trivialization $T$ of that bundle.  Then $\beta^{*}T$ and the
trivialization of Section~\ref{sec:decomp} are two trivializations of the
same bundle over $\Disk$.  Two such identify a fiber with the fiber over
$0$ by maps differing by $g(0)^{-1}g(s)$ for some $g$ into the
diffeomorphism group of $L_p$; a path from $0$ to $s$ makes
$t\mapsto g(\gamma(t))$ an isotopy from $g(0)$ to $g(s)$, so the
discrepancy is isotopic to the identity and acts trivially on $H_2$.  Only
path-connectedness is used.

Third, the radial transport trivializes the same bundle over the ray
through $\beta(s)$, so on that ray it agrees with $T$ on $H_2$ for the same
reason.  The sweep-side tube projection and the resolution-side
circle-bundle projection therefore induce the same map on $H_2$, up to the
isometry of
the statement.

Finally the $A_1$ and $A_2$ summands for $\dP_6$ and the unique
$(-2)$-line for $\dP_7$ are intrinsic to the polarized lattice, so
they correspond.  Since the displayed diffeomorphism is one of pairs,
it computes the kernel of the actual inclusion
$H_2(\partial \Mil_p)\to H_2(\Mil_p)$, as required.
\end{proof}

\subsection{Deformations that leave germs singular}\label{sec:partialmem}
Theorem~\ref{thm:necessity-intro} concerns smoothings.  A threefold carrying a
germ with no smoothing at all admits none, but it may still admit
deformations that smooth its other germs, and the surgery argument
constrains those as well.  We record the form of Lemma~\ref{lem:M}
that Section~\ref{sec:partner} uses.  Call an isolated singularity
\emph{reduced-rigid} if the reduction of its miniversal base is a
single point; this does not assert $T^1=0$.  The anticanonical cone
over $\mathbb F_1=\Bl_1\PP^2$, one of the two del Pezzo surfaces of
degree~$8$ (not $\PP^1\times\PP^1$, whose cone is smoothable), is
reduced-rigid: its quadrilateral, with edge vectors
$(1,0),(1,2),(-2,-1),(0,-1)$, admits no nontrivial Minkowski
decomposition \emph{into lattice polygons}, although its cone of
rational Minkowski summands is two-dimensional, so that $\dim T^1=1$
and the miniversal base is
$\operatorname{Spec}\CC[\varepsilon]/(\varepsilon^2)$
\cite[(9.1)(iv),\,(9.2)]{Altmann97}; see also
Gross~\cite[\S5.5]{Gross97} and the example of \cite[\S2]{Gross97} (Example~2.8
of the 1995 preprint, Example~2.19 of the 2025 revision
arXiv:alg-geom/9506022v2), and \cite{PaperI}.

\begin{lemma}\label{lem:rigid}
Let $(Y,0)$ be reduced-rigid and let $\cX\to\Disk$ be a
one-parameter deformation of a complex space $X$ with
$(X,p)\cong(Y,0)$.  Then the induced deformation of the germ at $p$ is
trivial.  Consequently there are a neighborhood $U$ of $p$ in $\cX$
and an $\varepsilon>0$ with $U\cong U_0\times\Disk_\varepsilon$ over
$\Disk_\varepsilon$, where $U_0$ is a neighborhood of $p$ in $X$, so that for $0<|s|<\varepsilon$ the fiber
$X_s\cap U$ has a singular point $q_s$, with $q_s\to p$ and
$(X_s,q_s)\cong(Y,0)$.
\end{lemma}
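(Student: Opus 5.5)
The plan is to use versality of the analytic miniversal deformation of $(Y,0)$, which exists by Grauert and is recalled in Section~\ref{sec:miniversal}. The germ of $\cX$ at $p$ is a deformation of $(X,p)\cong(Y,0)$ over $(\Disk,0)$. Versality gives a classifying morphism $\phi\colon(\Disk,0)\to(B,0)$ into the miniversal base $(B,0)$, together with an isomorphism of germs $(\cX,p)\cong\phi^*\mathcal Y$ over $(\Disk,0)$.

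The key observation is the one already used for arcs in Section~\ref{sec:miniversal}. The analytic local ring $\cO_{\Disk,0}=\CC\{s\}$ is reduced, indeed a domain, so the ring map $\phi^\sharp\colon\cO_{B,0}\to\CC\{s\}$ kills nilpotents. It therefore factors through $\cO_{B_{\mathrm{red}},0}$. Reduced-rigidity says $B_{\mathrm{red}}$ is the reduced point, so $\phi^\sharp$ factors through $\CC$, and $\phi$ is the constant map to $0$. The pullback of $\mathcal Y$ along the constant map is the trivial deformation $(Y,0)\times(\Disk,0)$. Hence the induced deformation of the germ at $p$ is trivial.

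Next I would pass from germs to representatives. An isomorphism of analytic germs $(\cX,p)\cong(X,p)\times(\Disk,0)$ compatible with the projections to $\Disk$ is realized by a biholomorphism between open representatives. So I can choose a neighborhood $U$ of $p$ in $\cX$, an open neighborhood $U_0$ of $p$ in $X$, and $\varepsilon>0$ with $U\cong U_0\times\Disk_\varepsilon$ over $\Disk_\varepsilon$, after shrinking both sides so that their images match. Under this isomorphism the section $s\mapsto(p,s)$ gives points $q_s\in X_s\cap U$. By continuity of the isomorphism, $q_s\to p$ as $s\to 0$. The product structure gives $(X_s,q_s)\cong(U_0,p)\cong(Y,0)$, and in particular $q_s$ is a singular point of $X_s$.

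The only delicate point is the factorization through the reduction, which needs the base to be reduced. Over $\Disk$ this holds, although the statement would fail for nonreduced bases such as $\operatorname{Spec}\CC[\varepsilon]/(\varepsilon^2)$, which is exactly the $\mathbb F_1$ base. A secondary point is that versality is only a statement about germs. The shrinking to a product representative is routine but must be done with care so that the isomorphism commutes with the projections to $\Disk_\varepsilon$.
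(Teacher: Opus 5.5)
Your proposal is correct and matches the paper's proof: the classifying map from the disk, whose local ring is a domain, factors through $B_{\mathrm{red}}$, which is a reduced point. The map is therefore constant, so the induced deformation is the product $(Y,0)\times(\Disk,0)$, and the fiber statement follows by choosing representatives of that product decomposition, which you spell out slightly more explicitly than the paper does.
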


\begin{proof}
Near $p$ the family is induced from the miniversal deformation by a
holomorphic map $c\colon(\Disk,0)\to(B,0)$.  The analytic local ring
of a disk is a domain, hence reduced, so $c$ factors through
$B_{\mathrm{red}}$, a reduced point; thus $c$ is constant, and the
pullback of the miniversal family along a constant map is the product
$(Y,0)\times(\Disk,0)$.  The final statement is the passage from the
germ at $(p,0)$ to fibers, by choosing representatives of that
product decomposition.
\end{proof}

\begin{lemma}[membership, with germs left singular]\label{lem:Mpartial}
Let $X$ and $\Xh$ be as in the setting fixed in
Section~\ref{sec:intro} before Theorem~\ref{thm:necessity-intro}, except that
$X$ may carry in addition finitely many anticanonical cones over
$\mathbb F_1$, with $\Xh\to X$ the blowup of the vertex, with exceptional
surface $\mathbb F_1$, at each of them; no smoothing of $X$ is assumed to exist.  Let $\cX\to\Disk$ be
any one-parameter deformation of $X$ and let
\[
\Sigma\ =\ \{\,p\in\Sing X\ :\ \text{the local fiber }X_s\cap U_p
\text{ is singular}\,\},
\]
a set independent of $s$ for small $s\neq0$, because each local
classifying arc either lies in the local discriminant or meets it
only at the origin.  Put
\[
X_s^{\flat}=W\cup\bigsqcup_{p\notin\Sigma}\Mil_p ,
\]
a compact oriented $6$-manifold with boundary
$\bigsqcup_{p\in\Sigma}L_p$ contained in the smooth locus of $X_s$.
Then for every $\gamma\in H_3(X_s^{\flat},\partial X_s^{\flat};\QQ)$
there is a class $\kappa\in\bigoplus_pH_2(L_p;\QQ)$ with
\[
\kappa\in\ker\Bigl(\bigoplus_pH_2(L_p;\QQ)\to H_2(\Xh;\QQ)\Bigr),
\qquad
\kappa_p\in \Rsm_p\quad(p\notin\Sigma\text{ a cone point}),
\]
whose component at a node $q\notin\Sigma$ is a fixed nonzero multiple
of $(\gamma\cdot\delta_q)$ times the generator, and whose component at
a cone point $p\notin\Sigma$ is the image of $\gamma$ under the
composite
$H_3(X_s^{\flat},\partial X_s^{\flat};\QQ)\to H_3(\Mil_p,L_p;\QQ)
\xrightarrow{\ \partial,\ \sim\ }\Rsm_p$.  Here $\gamma\cdot\delta_q$ is
the Poincar\'e--Lefschetz pairing of $X_s^{\flat}$.  No \emph{local}
condition is imposed on $\kappa_p$ for $p\in\Sigma$; those coordinates
are of course still coupled to the others by the displayed kernel
condition.
\end{lemma}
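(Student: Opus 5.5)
The plan is to repeat the proof of Lemma~\ref{lem:M}, with $X_s$ replaced by the manifold with boundary $X_s^{\flat}$ and absolute classes replaced by relative ones.

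\textbf{Setup.} First I check that $\Sigma$ is independent of $s$. At each singular point the induced local deformation is classified by an arc in the miniversal base (Section~\ref{sec:miniversal}). Either the arc lies in the local discriminant, or, by the identity theorem on the disk, it meets the discriminant only at $s=0$. At an $\mathbb F_1$ point Lemma~\ref{lem:rigid} gives a product family, so such points always lie in $\Sigma$. By the remark at the end of Section~\ref{sec:decomp}, the $C^\infty$-trivialisation over a neighbourhood of $W$ still holds. Hence $X_s^{\flat}$ is a compact oriented $6$-manifold whose boundary $\bigsqcup_{p\in\Sigma}L_p$ lies in the smooth locus.

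\textbf{Construction of $\kappa$.} Take $\gamma\in H_3(X_s^{\flat},\partial X_s^{\flat};\QQ)$. Consider the decomposition
\[
X_s^{\flat}=W\cup\bigsqcup_{p\notin\Sigma}\Mil_p,
\]
in which $W$ carries the boundary components $L_p$ for $p\in\Sigma$. Apply the relative Mayer--Vietoris sequence to the pair $(X_s^{\flat},\partial X_s^{\flat})$, covered by $(W,\bigsqcup_{p\in\Sigma}L_p)$ and $(\bigsqcup_{p\notin\Sigma}\Mil_p,\emptyset)$, with intersection $\bigsqcup_{p\notin\Sigma}L_p$. Its connecting map produces the components $\kappa_p:=\partial\gamma_p\in H_2(L_p)$ for $p\notin\Sigma$. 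Exactness says that $\sum_{p\notin\Sigma}\kappa_p$ dies in $H_2(W,\bigsqcup_{p\in\Sigma}L_p)$ and that each $\kappa_p$ dies in $H_2(\Mil_p)$.

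The relative vanishing is not yet a statement in $H_2(W)$, so I repair it using the long exact sequence of the pair $(W,\bigsqcup_{\Sigma}L_p)$. Vanishing there means that the image of $\sum_{p\notin\Sigma}\kappa_p$ in $H_2(W)$ comes from some class in $\bigoplus_{p\in\Sigma}H_2(L_p)$. Choose such a preimage and set $\kappa_p$ equal to minus its $p$-component for $p\in\Sigma$. Then the total $\kappa$ maps to zero in $H_2(W)$, hence in $H_2(\Xh)$ via $W\subset\Xh$. This is exactly why the coordinates at $p\in\Sigma$ satisfy no local condition: they are merely some preimage, so $\kappa$ is not canonical there. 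The statement asserts only existence, so that is enough.

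\textbf{Local identifications.} For a cone point $p\notin\Sigma$, the Mayer--Vietoris boundary factors through excision as
\[
H_3(X_s^{\flat},\partial)\to H_3(\Mil_p,L_p)\xrightarrow{\partial}H_2(L_p).
\]
Lemma~\ref{lem:M}, together with Lemma~\ref{lem:boundary} for the resolution-side markings, identifies the last map as an isomorphism onto $\Rsm_p$. For a node $q\notin\Sigma$, the image of $\gamma$ in $H_3(\Mil_q,L_q)\cong\QQ F$ is $(\gamma\cdot\delta_q)F$. This uses the perfect Lefschetz pairing $H_3(\Mil_q,L_q)\times H_3(\Mil_q)\to\QQ$ together with the fact that the Poincar\'e--Lefschetz pairing of $X_s^{\flat}$, restricted to a cycle supported in the interior piece $\Mil_q$, agrees with the local pairing.

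\textbf{Main obstacle.} I expect the delicate step to be this last compatibility: that the relative class pairs with $\delta_q$ through the local Lefschetz pairing, now that $X_s^{\flat}$ has boundary. It holds because $\delta_q$ is compactly supported in the interior of $\Mil_q$, so intersection numbers can be computed locally. A second point needing care is that Lemma~\ref{lem:boundary} still applies at points $p\notin\Sigma$ even though other germs stay singular. This is fine because that lemma is purely germ-local.
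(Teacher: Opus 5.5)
Your proposal is correct and follows essentially the paper's own proof. Both use the relative Mayer--Vietoris sequence with $A=W$ carrying $\partial X_s^{\flat}$, and both lift the image in $H_2(W;\QQ)$ through $\bigoplus_{p\in\Sigma}H_2(L_p;\QQ)$ via the pair sequence of $(W,\partial X_s^{\flat})$, setting $\kappa_p=-y_p$ there. Both then use excision to $(\Mil_p,L_p)$ together with Poincar\'e--Lefschetz duality for the node and cone-point components. The paper adds only the remark that the couple is made excisive by a collar thickening.
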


\begin{proof}
By Lemma~\ref{lem:rigid} every $\mathbb F_1$-cone point lies in
$\Sigma$, so a cone point $p\notin\Sigma$ is a $\dP_6$- or
$\dP_7$-cone point and the local computations of Lemma~\ref{lem:M}
apply to it.  Apply Mayer--Vietoris for the pair
$(X_s^{\flat},\partial X_s^{\flat})$ with $A=W$,
$B=\bigsqcup_{p\notin\Sigma}\Mil_p$ and
$\partial X_s^{\flat}\subset A$, the couple being made excisive by the
standard collar thickening:
\[
H_3(X_s^{\flat},\partial X_s^{\flat};\QQ)
\xrightarrow{\ \partial\ }
\bigoplus_{p\notin\Sigma}H_2(L_p;\QQ)
\longrightarrow
H_2\bigl(W,\partial X_s^{\flat};\QQ\bigr)\oplus
\bigoplus_{p\notin\Sigma}H_2(\Mil_p;\QQ).
\]
The second summand gives $(\partial\gamma)_p\in \Rsm_p$ at the cone
points $p\notin\Sigma$.  For the first, let $x\in H_2(W;\QQ)$ be the
image of $\partial\gamma$.  It dies in
$H_2(W,\partial X_s^{\flat};\QQ)$, so by exactness of
\[
H_2(\partial X_s^{\flat};\QQ)\longrightarrow H_2(W;\QQ)
\longrightarrow H_2(W,\partial X_s^{\flat};\QQ)
\]
it is the image of some
$y\in H_2(\partial X_s^{\flat};\QQ)=
\bigoplus_{p\in\Sigma}H_2(L_p;\QQ)$.  Set
$\kappa_p=(\partial\gamma)_p$ for $p\notin\Sigma$ and
$\kappa_p=-y_p$ for $p\in\Sigma$.  Then $\kappa$ dies in $H_2(W;\QQ)$,
hence in $H_2(\Xh;\QQ)$ because $W\subset\Xh$.

For the local statements, fix a node $q\notin\Sigma$.  Since
$\partial X_s^{\flat}\subset W$ is disjoint from $\Mil_q$, excision gives
a restriction
$H_3(X_s^{\flat},\partial X_s^{\flat};\QQ)\to
H_3(\Mil_q,L_q;\QQ)$, and under Poincar\'e--Lefschetz duality on
$X_s^{\flat}$ it is dual to the inclusion
$H_3(\Mil_q;\QQ)\to H_3(X_s^{\flat};\QQ)$.  With $F\cdot\delta_q=1$ as in
the proof of Lemma~\ref{lem:M}, the restriction of $\gamma$ is
therefore $(\gamma\cdot\delta_q)F$, and $\kappa_q$ is
$(\gamma\cdot\delta_q)\,\partial F$.  The same excision at a cone
point $p\notin\Sigma$, followed by the isomorphism
$\partial\colon H_3(\Mil_p,L_p;\QQ)\to \Rsm_p$ of Lemma~\ref{lem:M},
gives the second statement.
\end{proof}

\begin{remark}\label{rem:partialform}
The construction of $\Omega$ in Section~\ref{sec:relform} is unchanged
in this setting.  It uses the decomposition
\[
X_s\ =\ W\ \cup\ \bigsqcup_{p\notin\Sigma}\Mil_p\ \cup\
\bigsqcup_{p\in\Sigma}\bigl(X_s\cap U_p\bigr)
\]
along all the links, the vanishing $H^1(L_p;\ZZ)=0$ of
Section~\ref{sec:relform}, which covers the $\mathbb F_1$ link, and the
triviality of $\omega_{\cX/\Disk}$ near each $(p,0)$.  The fiber $X_s$
is allowed to be singular.  That it is Gorenstein needs an argument
now that it is not smooth, and the following one replaces the appeal
to base change: $X_0$ is Gorenstein and is the divisor $\{t=0\}$ of
$\cX$ with $t$ a nonzerodivisor by flatness, so $\cX$ is Gorenstein
near $X_0$; each $X_s$ is then a Cartier divisor in a Gorenstein space
and hence Gorenstein, and adjunction makes $\omega_{\cX/\Disk}$
invertible with $\omega_{\cX/\Disk}|_{X_s}\cong\omega_{X_s}$.  The
singularities of $X_s$ are isolated for small $s$, since the singular
locus of $\cX\to\Disk$ is closed, proper over $\Disk$, and meets $X_0$
in a finite set; the same argument places $\Sing X_s$ inside
$\bigsqcup_p\operatorname{int}U_p$, which is what makes
$X_s^{\flat}$ lie in the smooth locus.  The holomorphic $3$-form restricts
to $X_s^{\flat}$, and the cycles used in Lemmas~\ref{lem:P} and~\ref{lem:D}
lie there by construction.  A cycle that bounds a $4$-chain in
$X_s^{\flat}$ has zero period by Stokes' theorem.  The same period arguments
therefore prove nonvanishing in $H_3(X_s^{\flat};\QQ)$, the group used in
Section~\ref{sec:partner}.  Poincar\'e--Lefschetz duality on the
compact oriented manifold with boundary $X_s^{\flat}$, which pairs
$H_3(X_s^{\flat};\QQ)$ perfectly with
$H_3(X_s^{\flat},\partial X_s^{\flat};\QQ)$, replaces Poincar\'e
duality on $X_s$.
\end{remark}

\section{The distinguished hypersurface and its finite data}
\label{sec:example}

\subsection{The threefold}\label{sec:X0}
Let $\Delta=\Delta_{\mathbb F_1}$ be the $22$-vertex reflexive
$4$-polytope of \cite[Theorem~6.1]{PaperIII} and
$\Delta^{\vee}$ its polar, with the $26$ vertices
$q_1,\dots,q_{26}$ listed in Table~\ref{tab:rays}.  For a reflexive
polytope $\Pi$ we write $\PP_{\Pi}$ for the Gorenstein Fano toric fourfold
whose fan is the face fan of $\Pi$; its anticanonical polytope is the polar
of $\Pi$, so $\PP_{\Delta^{\vee}}$ is the variety written $\PP_{\Delta}$ in
Batyrev's notation~\cite{Batyrev94}.  A Batyrev-regular
anticanonical hypersurface
$X^{\circ}\subset\PP_{\Delta^{\vee}}$ has exactly $26$ ordinary double
points (one for each unit-square $2$-face of $\Delta^\vee$), two
anticanonical $\dP_7$-cone points (the two pentagon faces with one
interior point) and two $\dP_6$-cone points (the two hexagon faces).
This is stated for a generic equation in \cite[Section~6]{PaperIII}; the
following argument applies to every Batyrev-regular equation.  All thirty
germs are \emph{analytically}
the cones: the ambient chart at the $3$-dimensional cone over a
$2$-face $F$ splits as $C(F)\times\CC^{*}$, the restriction of the
defining section to the one-dimensional orbit is
$t^{k}(c_a+c_bt)$ with $c_a,c_b\neq0$ by Batyrev regularity because
the dual edge $F^{*}$ has lattice length one, and Weierstrass
preparation in the orbit coordinate exhibits the germ as a graph over
$(C(F),0)$.  In particular each face contributes exactly one singular
point, the simple zero of $c_a+c_bt$.

$X^{\circ}$ satisfies the standing hypotheses of
Section~\ref{sec:periods}: it is an anticanonical Cartier divisor in
the Gorenstein Fano toric fourfold $P=\PP_{\Delta^{\vee}}$, so
$\omega_{X^{\circ}}\cong\cO_{X^{\circ}}$ by adjunction, and
$h^1(\cO_{X^{\circ}})=0$ from the sequence
$0\to\cO_P(K_P)\to\cO_P\to\cO_{X^{\circ}}\to0$ together with
$H^1(\cO_P)=0$ and
$H^2(\cO_P(K_P))\cong H^{2}(\cO_P)^{\vee}=0$ (toric vanishing and
Serre duality on the complete toric fourfold).

\begin{table}\centering
\caption{The $30$ lattice points of $\partial\Delta^{\vee}$: the
vertices $q_1,\dots,q_{26}$ and the four face-interior points:
$q_{27},q_{28}$ are the interior points of the two hexagons (below
$E_{6,1},E_{6,2}$) and $q_{29},q_{30}$ of the two pentagons (below
$E_{7,1},E_{7,2}$).}\label{tab:rays}
\begin{tabular}{llll}
\toprule
$q_1=(-1,-1,-1,-1)$ & $q_2=(-1,-1,-1,0)$ & $q_3=(-1,-1,0,-1)$ &
$q_4=(-1,-1,0,1)$\\
$q_5=(-1,-1,1,0)$ & $q_6=(-1,-1,1,1)$ & $q_7=(-1,0,-1,-1)$ &
$q_8=(-1,0,-1,0)$\\
$q_9=(-1,0,0,-1)$ & $q_{10}=(-1,0,0,1)$ & $q_{11}=(-1,0,1,0)$ &
$q_{12}=(-1,0,1,1)$\\
$q_{13}=(0,-1,-1,-1)$ & $q_{14}=(0,-1,-1,0)$ & $q_{15}=(0,-1,0,1)$ &
$q_{16}=(0,-1,0,0)$\\
$q_{17}=(0,1,-1,-1)$ & $q_{18}=(0,1,-1,0)$ & $q_{19}=(0,1,0,-1)$ &
$q_{20}=(0,1,0,0)$\\
$q_{21}=(1,1,-1,-1)$ & $q_{22}=(1,0,-1,0)$ & $q_{23}=(0,0,0,-1)$ &
$q_{24}=(0,0,0,1)$\\
$q_{25}=(0,0,1,0)$ & $q_{26}=(0,0,1,1)$ & $q_{27}=(-1,-1,0,0)$ &
$q_{28}=(-1,0,0,0)$\\
$q_{29}=(0,0,-1,-1)$ & $q_{30}=(0,0,-1,0)$ &&\\
\bottomrule
\end{tabular}
\end{table}

\subsection{The resolution and the matrix}\label{sec:matrix}
Five terms recur in what follows, and we fix them here.

A \emph{branch} at a singular point is a reduced irreducible component of the
germ's miniversal base: a node has one, a $\dP_7$ point one (the reduced line of
Section~\ref{sec:miniversal}), and a $\dP_6$ point two, the $A_1$ line and the
$A_2$ plane.  A one-parameter smoothing of $X$ induces at each point an arc into
one branch, and $\Rsm_p$ depends on which.  A \emph{profile} for $X$ is a choice
of branch at every $\dP_6$ point at once; the other germ types offer no choice,
so $X$ with $k$ del Pezzo sextic points has $2^{k}$ profiles, and a criterion
that must hold for every smoothing is evaluated on each.

A coordinate of $\bigoplus_pH_2(L_p;\QQ)$ is \emph{covered} when
Theorem~\ref{thm:necessity} asserts nonvanishing there: every node coordinate,
the $(-2)$-line at every $\dP_7$ point, and the $A_1$ summand at a $\dP_6$ point
on a profile that sends it to the line branch.  The $A_2$ plane is not covered,
whose later treatment is described in Section~\ref{sec:open}.

The \emph{root subspace} of a cone point is $\Rsm_p$ read in $K^{\perp}$ through
Proposition~\ref{prop:kernels}: the $(-2)$-line at a $\dP_7$ point, and the
$A_1$ or the $A_2$ summand of $K^{\perp}\cong A_1\oplus A_2$ at a $\dP_6$ point.
The name records that at degree $6$ these are the root sublattices of
$K^{\perp}$.  At degree $7$ the lattice $K^{\perp}$ has discriminant $7$ and is
not generated by its roots, and the subspace is the line spanned by its unique
$(-2)$-class up to sign.

We call the rows of $B$ at a germ its \emph{relation rows};
Proposition~\ref{prop:T1channels} shows that their complexifications form a
basis of the germ's $T^1$.

Fix the crepant resolution $\Xh\to X^{\circ}$ obtained from the
projective simplicial subdivision of the face fan of $\Delta^\vee$
using all thirty boundary lattice points, with the star subdivision at
the four face-interior points and the twenty-six square diagonals
\[
\begin{aligned}
&(q_2,q_7),(q_2,q_{13}),(q_3,q_7),(q_4,q_8),(q_4,q_{14}),
(q_5,q_9),(q_5,q_{13}),(q_5,q_{23}),(q_9,q_{23}),\\
&(q_6,q_{10}),(q_{10},q_{15}),(q_6,q_{11}),(q_6,q_{16}),
(q_6,q_{25}),(q_8,q_{17}),(q_9,q_{17}),(q_{12},q_{18}),
(q_{12},q_{24}),\\
&(q_{12},q_{19}),(q_{12},q_{25}),(q_{14},q_{16}),(q_{16},q_{23}),
(q_{16},q_{26}),(q_{18},q_{19}),(q_{20},q_{24}),(q_{20},q_{25}),
\end{aligned}
\]
labeled $N_1,\dots,N_{26}$ in this order; the subdivision is smooth
and projective.  Over the nodes $\pi$ is small with
exceptional curves $\Gamma_i$, and over the cone points the
exceptional divisors are toric surfaces
$E_{6,1},E_{6,2}\cong\dP_6$ and $E_{7,1},E_{7,2}\cong\dP_7$.

Write $\widehat\Sigma$ for that subdivided fan and $\PP_{\widehat\Sigma}=\Xh$'s
ambient fourfold $\TV(\widehat\Sigma)$; the letter $Y$ stays with the cone germ
of Section~\ref{sec:germs}.  We record the two families of functionals on
$\Pic(\PP_{\widehat\Sigma})$
(equivalently, by restriction, on $\Pic(\Xh)\otimes\QQ$, though only
one containment is used below) whose kernel computation is the finite
input.  \emph{Node rows}: for each square with cyclic vertices
$(a,b,c,d)$ and chosen diagonal $(a,c)$, the intersection functional
of $\Gamma_i$, with values $-1$ at $D_a,D_c$, $+1$ at $D_b,D_d$ and
$0$ otherwise; it annihilates principal divisors because
$q_a+q_c=q_b+q_d$.  \emph{Divisorial rows}: for each exceptional
surface, the composition of the restriction
$\Pic(\PP_{\widehat\Sigma})\to\Pic(E)$ ($D_j\mapsto$ the boundary curve of $E$ for the
rays of the face, $D_{\mathrm{int}}\mapsto K_E$, all other rays
$\mapsto0$) with a basis of the rank-$2$ or $3$ lattice
$K^{\perp}\subset\Pic(E)^{\vee}$.  In the bases of
Table~\ref{tab:kperp} the three $\dP_6$ functionals
$\varphi_{s_1},\varphi_{s_2},\varphi_{s_3}$ satisfy
$\varphi_{s_i}^2=-2$, $\varphi_{s_1}\cdot\varphi_{s_2}=1$,
$\varphi_{s_3}\perp\varphi_{s_1},\varphi_{s_2}$, so
$\langle\varphi_{s_3}\rangle$ is the $A_1$ summand and
$\langle\varphi_{s_1},\varphi_{s_2}\rangle$ the $A_2$ summand; the
two $\dP_7$ functionals satisfy $\varphi_\beta^2=-2$,
$\varphi_\alpha^2=-4$, and $\varphi_\beta$ spans the unique
$(-2)$-line.  Thus, if $e_{p,i}$ is a chosen basis class of $H_2(L_p;\QQ)$
and $\iota_p\colon L_p\hookrightarrow\Xh$ is the inclusion, our convention is
\[
B_{(p,i),j}=D_j\cdot(\iota_p)_*e_{p,i}.
\]
Rows are indexed by local homology classes and columns by ambient divisor
classes; a relation with coefficient vector $\kappa$ satisfies
$B^{\mathsf T}\kappa=0$.
Stacking the $26$ node rows and the ten divisorial rows
gives an integral matrix
$B\in\operatorname{Mat}_{36\times26}(\ZZ)$ in the dual basis of the
$26$ divisor classes complementary to the unimodular cone
$(q_{15},q_{22},q_{24},q_{26})$.

\begin{table}\centering
\caption{Exceptional-surface data.  Bases of $\Pic(E)$: boundary
divisors at the rays listed; intersection matrices $I$; canonical
classes $K$; and the $K^{\perp}$ functionals in the dual bases.  The
two $\dP_6$ surfaces carry identical data at rays
$(q_3,q_4,q_5,q_6)$, resp.\ $(q_9,q_{10},q_{11},q_{12})$.}
\label{tab:kperp}
\begin{tabular}{lll}
\toprule
surface & lattice data & $K^\perp$ rows\\
\midrule
$E_{6,i}$ &
$I_6=\begin{psmallmatrix*}[r]-1&0&1&0\\0&-1&0&1\\1&0&-1&1\\0&1&1&-1
\end{psmallmatrix*}$,\quad $K=(-1,-1,-2,-2)$ &
$\begin{aligned}\varphi_{s_1}&=(0,0,-1,1)\\
\varphi_{s_2}&=(-1,-1,1,0)\\
\varphi_{s_3}&=(-1,1,1,-1)\end{aligned}$\\[1.4em]
$E_{7,1}$ &
$I_{7,1}=\begin{psmallmatrix*}[r]0&0&1\\0&-1&1\\1&1&0
\end{psmallmatrix*}$ at $(q_{13},q_{17},q_{21})$,\quad
$K=(-1,-1,-2)$ &
$\begin{aligned}\varphi_{\alpha}&=(1,-1,0)\\
\varphi_{\beta}&=(1,1,-1)\end{aligned}$\\[1.2em]
$E_{7,2}$ &
$I_{7,2}=\begin{psmallmatrix*}[r]-1&0&1\\0&0&1\\1&1&0
\end{psmallmatrix*}$ at $(q_{14},q_{18},q_{22})$,\quad
$K=(-1,-1,-2)$ &
$\begin{aligned}\varphi_{\alpha}&=(-2,0,1)\\
\varphi_{\beta}&=(-1,-1,1)\end{aligned}$\\
\bottomrule
\end{tabular}
\end{table}

\subsection{The ambient kernel computation}\label{sec:kernelcomp}
Write coordinates on $\QQ^{36}$ as
\[
(u_1,\dots,u_{26};\ s_{1},s_{2},s_{3}\text{ at }E_{6,1};\
s_{1},s_{2},s_{3}\text{ at }E_{6,2};\ \alpha_1,\beta_1;\
\alpha_2,\beta_2).
\]
The labels $s_1,s_2,s_3$ are chosen to match those of
Section~\ref{sec:miniversal}: there $\{s_1=s_2=0\}$ cuts out the line
component of the miniversal base and $\{s_3=0\}$ the plane component, and here
the same two equations cut out the $A_1$ line and the $A_2$ plane of
$K^{\perp}$.  These are different spaces and the coincidence of equations is a
choice of labelling, not an identification.
In these fixed coordinates, every $\kappa\in\ker(B^{\mathsf T})$ satisfies
\[
u_9=u_{11}=\alpha_1-\alpha_2,\qquad
\beta_1=0,\qquad \beta_2=-\alpha_2.
\]
Indeed, summing the divisor-pairing equations for
$D_1,\dots,D_{12},D_{17},\dots,D_{20}$ gives the formula for $u_9$;
negating the sum for $D_7,\dots,D_{12},D_{17},\dots,D_{21}$ gives
the formula for $u_{11}$.  The negative of the $D_{21}$ equation gives
$\beta_1=0$, and the sum for $D_1,\dots,D_{12}$ minus the $D_{21}$
equation gives $\beta_2=-\alpha_2$.
The Milnor-fiber conditions at the two degree-$7$ points impose
$\alpha_1=\alpha_2=0$, so these identities force
$u_9=u_{11}=\beta_1=\beta_2=0$, independently of the degree-$6$ profile.
In particular, they exclude the nonzero nodal coefficients required by
Theorem~\ref{thm:necessity}.
The following further statements are exact integral computations from the data of
Tables~\ref{tab:rays} and~\ref{tab:kperp} and the subdivision of
Section~\ref{sec:matrix}:
\begin{enumerate}
\item $\rk B=21$ and $\dim\ker(B^{\mathsf T})=15$; the node
subsystem has rank $18$, with coloops exactly $N_9$ and $N_{11}$; the
unique row coloop of the full matrix is the $\beta_1$-row, i.e.\
$\beta_1$ vanishes identically on $\ker(B^{\mathsf T})$.  This last
statement is read in the basis of Table~\ref{tab:kperp}: replacing
$\varphi_\alpha$ by $\varphi_\alpha+c\varphi_\beta$ shears the kernel
coordinates and can destroy it.  What is basis-free is the restricted
statement of item~(2), since once $\alpha=0$ the coordinate $\beta$ does
not depend on the choice of complement to the $(-2)$-line.  The second
proof of Theorem~\ref{thm:main} uses item~(1) in this fixed basis;
item~(2) gives the same conclusion in basis-free form.
\item On the \emph{enlarged restricted kernel}
$\ker(B^{\mathsf T})\cap\{\alpha_1=\alpha_2=0\}$, of dimension $13$,
the four coordinates $u_9,u_{11},\beta_1,\beta_2$ vanish identically.
The four finer profiles obtained by additionally restricting each
$\dP_6$ block to its $A_1$ line ($s_1=s_2=0$) or its $A_2$ plane
($s_3=0$) have dimensions $9,10,10,12$ and force the same four
coordinates, together with one $s_3$ on each mixed profile.  The last dimension
is $12$ and not $11$ because the two $s_3$-coordinates are proportional on the
enlarged restricted kernel: imposing either alone drops the dimension by one,
and imposing the second drops it no further.  That proportionality is also why
a mixed profile, which deletes one $s_3$, forces the other.
\item The vanishing conclusion of item~(2) fails if either restriction
$\alpha_i=0$ is removed.
\end{enumerate}
By construction $\ker(B^{\mathsf T})$ contains the kernel of
$\bigoplus_pH_2(L_p;\QQ)\to H_2(\Xh;\QQ)$ written in these
coordinates: the node generators map to $\pm[\Gamma_i]$ and the
divisorial classes to their pushforwards from
$K^{\perp}\subset H_2(E_p)$, and the entries of each row are the pairings
of its local homology class with the ambient divisor classes.
Only this containment is used: if the ambient
Picard group under-detected $H_2(\Xh;\QQ)$, the ambient kernel
would be larger than the topological one and every forced-zero
statement would still bind.  Equality in fact holds here
($h^{1,1}(\Xh)=26$, the thirty boundary lattice points of $\Delta^{\vee}$ less
four),
and holds for every
polytope in the class considered in this paper, by
Proposition~\ref{prop:kernelequal}.

\section{Proofs of the main results}\label{sec:proofs}

\begin{theorem}[mixed necessity]\label{thm:necessity}
Let $X$ be a connected compact complex threefold with $\omega_X\cong\cO_X$ and
$h^1(\cO_X)=0$ whose singular points are ordinary double points and
anticanonical cones over $\dP_6$ or $\dP_7$, let
$\Xh\to X$ be a crepant resolution as in
Section~\ref{sec:intro}, and suppose $X$ admits a one-parameter
smoothing.  With $\Rsm_p=\ker(H_2(L_p;\QQ)\to H_2(\Mil_p;\QQ))$ for the
actual Milnor fibers of the induced local smoothings, set
\[
K(X)=\Bigl\{\kappa\in\ker\Bigl(\bigoplus_pH_2(L_p;\QQ)
\to H_2(\Xh;\QQ)\Bigr)\ :\ \kappa_p\in \Rsm_p
\text{ at every cone point }p\Bigr\}.
\]
Then:
\begin{enumerate}
\item for every node $q$ there is $\kappa\in K(X)$ with
$\kappa_q\neq0$;
\item for every $\dP_7$ point $q$ there is $\kappa\in K(X)$ whose
$q$-component is a nonzero element of the $(-2)$-line $\Rsm_q$;
\item for every $\dP_6$ point $q$ smoothed through the one-dimensional
component of its miniversal base there is $\kappa\in K(X)$ whose
$q$-component is a nonzero element of $\Rsm_q$ (the $A_1$ summand).
\end{enumerate}
\end{theorem}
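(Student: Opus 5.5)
The proof combines the two ingredients already in place. Lemma~\ref{lem:M} says that every $\gamma\in H_3(X_s;\QQ)$ has Mayer--Vietoris boundary $\partial\gamma$ in $K(X)$. Lemmas~\ref{lem:P} and~\ref{lem:D} say that the relevant local classes survive in $H_3(X_s;\QQ)$. Poincar\'e duality on the smooth compact oriented $6$-manifold $X_s$ then supplies a $\gamma$ whose boundary has nonzero $q$-component. Fix a small $s\neq0$ and take the decomposition $X_s=W\cup\bigsqcup_p\Mil_p$ of Section~\ref{sec:decomp}.

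\emph{Case (1), $q$ a node.} By Lemma~\ref{lem:P}, $\delta_q(s)\neq0$ in $H_3(X_s;\QQ)$. The intersection pairing on $H_3(X_s;\QQ)$ is perfect, so there is $\gamma$ with $\gamma\cdot\delta_q\neq0$. Set $\kappa=\partial\gamma$. Lemma~\ref{lem:M} gives $\kappa\in K(X)$, and it also says $\kappa_q$ is a fixed nonzero multiple of $(\gamma\cdot\delta_q)$ times the generator, hence $\kappa_q\neq0$.

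\emph{Cases (2) and (3), $q$ a cone point.} Here $q$ is a $\dP_7$ point, or a $\dP_6$ point smoothed through the line branch, and $H_3(\Mil_q;\QQ)\cong\QQ$. Let $\nu$ generate it. By Lemma~\ref{lem:D}, $\iota_*\nu\neq0$ in $H_3(X_s;\QQ)$, so duality gives $\gamma$ with $\gamma\cdot\iota_*\nu\neq0$. Excision identifies the $L_q$-component of $\partial\gamma$ with $\partial$ applied to the restriction $\gamma|_{\Mil_q}\in H_3(\Mil_q,L_q;\QQ)$. This restriction is Poincar\'e--Lefschetz dual to the pairing with the image of $H_3(\Mil_q)\to H_3(X_s)$, exactly as in the node computation of Lemma~\ref{lem:Mpartial}. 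Since $\gamma\cdot\iota_*\nu\neq0$ and the Lefschetz pairing $H_3(\Mil_q)\times H_3(\Mil_q,L_q)\to\QQ$ is perfect, the restriction is nonzero. Lemma~\ref{lem:M} says $\partial\colon H_3(\Mil_q,L_q;\QQ)\to\Rsm_q$ is an isomorphism, so $\kappa_q=(\partial\gamma)_q$ is a nonzero element of $\Rsm_q$. Proposition~\ref{prop:kernels} and Lemma~\ref{lem:boundary} identify $\Rsm_q$ in resolution-side coordinates as the $(-2)$-line for $\dP_7$ and the $A_1$ summand for the $\dP_6$ line branch. Again $\kappa=\partial\gamma\in K(X)$ by Lemma~\ref{lem:M}.

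\emph{Main obstacle.} The delicate step is the duality bookkeeping in the cone case: checking that the restriction map $H_3(X_s)\to H_3(\Mil_q,L_q)$ is adjoint to $\iota_*$ under the two dualities, with consistent orientations. This follows from naturality of cap product with fundamental classes applied to $(\Mil_q,L_q)\subset X_s$. Note also that $\Rsm_p$ in $K(X)$ refers to the actual Milnor fibers, so no choice of profile enters. The substantive inputs, namely the nonvanishing of the periods and the boundary marking, are already supplied by Lemmas~\ref{lem:P}, \ref{lem:D} and~\ref{lem:boundary}.
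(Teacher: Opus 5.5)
Your proposal is correct and follows the paper's proof essentially step for step. Lemma~\ref{lem:P} or Lemma~\ref{lem:D} gives survival of the local class, Poincar\'e duality on $X_s$ picks $\gamma$, and Lemma~\ref{lem:M} places $\partial\gamma$ in $K(X)$ with nonzero $q$-component. Your explicit check that $\gamma$ restricts to a nonzero element of $H_3(\Mil_q,L_q;\QQ)$ fills in a step the paper leaves implicit; for that, the identity $\langle\gamma|_{\Mil_q},\nu\rangle=\gamma\cdot\iota_*\nu\neq0$ suffices and perfectness is not needed. For the $A_1$ identification in~(3) you should also cite the branch-to-sweep matching of Section~\ref{sec:match}, as the paper does.
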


\begin{proof}
Fix small $s\neq0$.  (1)~By Lemma~\ref{lem:P}, $\delta_q\neq0$ in
$H_3(X_s;\QQ)$; by Poincar\'e duality on the closed oriented $X_s$
choose $\gamma$ with $\gamma\cdot\delta_q\neq0$; by
Lemma~\ref{lem:M}, $\kappa=\partial\gamma$ lies in $K(X)$ and its
$q$-component is a nonzero multiple of $\gamma\cdot\delta_q$.
(2),(3)~By Lemma~\ref{lem:D} the generator $\nu$ of
$H_3(\Mil_q;\QQ)$ survives in $H_3(X_s;\QQ)$; choose $\gamma$ with
$\gamma\cdot\nu_*\neq0$; by Lemma~\ref{lem:M} the $q$-component of
$\partial\gamma$ is the image of a nonzero element of
$H_3(\Mil_q,L_q;\QQ)$ under the isomorphism onto $\Rsm_q$, and $\Rsm_q$ is
identified with the stated root subspace by
Proposition~\ref{prop:kernels}, Section~\ref{sec:match} and
Lemma~\ref{lem:boundary}.
\end{proof}

\begin{remark}\label{rem:single}
Since a $\QQ$-vector space is not the union of finitely many proper
subspaces, the per-point conclusions of
Theorem~\ref{thm:necessity} combine: there is a \emph{single} class
$\kappa\in K(X)$ simultaneously nonzero at every node, every
$\dP_7$ point, and every line-smoothed $\dP_6$ point.
\end{remark}

\begin{corollary}\label{cor:friedman}
In the purely nodal case, Theorem~\ref{thm:necessity}(1) with
Remark~\ref{rem:single} states: any one-parameter smoothing forces a
relation among the classes $[\Gamma_i]\in H_2(\Xh;\QQ)$ of the
exceptional curves of a small resolution in which every node appears
with nonzero coefficient.  \qed
\end{corollary}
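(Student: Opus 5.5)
The corollary is the specialization of Theorem~\ref{thm:necessity}(1) to the case where there are no cone points. I would translate the abstract space $K(X)$ into relations among the curve classes, and then use Remark~\ref{rem:single} to combine the nodes into one relation.

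\emph{Specialization.} With no cone points, the local conditions $\kappa_p\in\Rsm_p$ are vacuous. Hence $K(X)$ is exactly $\ker\bigl(\bigoplus_q H_2(L_q;\QQ)\to H_2(\Xh;\QQ)\bigr)$. At each node $L_q\cong S^2\times S^3$ and $H_2(L_q;\QQ)=\QQ$. I would take as generator the $S^2$-class. Since $\pi$ is small over $q$, a neighborhood $N_q$ of the exceptional curve $\Gamma_q\cong\PP^1$ deformation retracts onto $\Gamma_q$. The link is the boundary of $N_q$, and $L_q\to N_q\simeq\Gamma_q$ is a sphere-bundle projection whose fiber sphere $S^2$ maps with degree $\pm1$ onto $\Gamma_q$. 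So the generator pushes forward to $\pm[\Gamma_q]$ in $H_2(\Xh;\QQ)$; this is the same sign convention used in Section~\ref{sec:kernelcomp}.

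\emph{Identification.} Therefore $\kappa=(\kappa_q)$ lies in $K(X)$ if and only if $\sum_q \pm\kappa_q[\Gamma_q]=0$ in $H_2(\Xh;\QQ)$. Under this identification $K(X)$ is precisely the space of rational relations among the exceptional curve classes. A coordinate $\kappa_q\neq0$ then means that $[\Gamma_q]$ appears with nonzero coefficient in the relation.

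\emph{Conclusion.} Theorem~\ref{thm:necessity}(1) shows that for each node $q$ the linear form $\kappa\mapsto\kappa_q$ does not vanish identically on $K(X)$. Its kernel is therefore a proper subspace of $K(X)$. There are finitely many nodes, and a $\QQ$-vector space is not a finite union of proper subspaces, so some single $\kappa\in K(X)$ has every coordinate nonzero. That $\kappa$ is the required relation. Its conclusion holds for the given small resolution $\Xh$, the one fixed in the setting.

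The only step needing care is the identification of the link generator with $\pm[\Gamma_q]$. I would verify it with the explicit small resolution of $xy=zw$, the total space of $\cO(-1)^{\oplus2}$ over $\PP^1$, whose unit sphere bundle is $S^2\times S^3$ with the $S^2$ factor projecting isomorphically onto the zero section. Every other step is formal.
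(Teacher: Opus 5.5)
Your argument is correct and is the paper's own: specialize $K(X)$ to the nodal case, identify each link generator with $\pm[\Gamma_q]$ (recorded in Section~\ref{sec:kernelcomp}), and combine the per-node conclusions of Theorem~\ref{thm:necessity}(1) by the finite-union argument of Remark~\ref{rem:single}. One small slip: the sphere-bundle projection $L_q\to\Gamma_q$ has fiber $S^3$, not $S^2$; the sphere that maps with degree $\pm1$ onto $\Gamma_q$ is a section $S^2\times\{\mathrm{pt}\}$ of $L_q\cong S^2\times S^3$, as your last paragraph says, so $H_2(L_q;\QQ)\to H_2(\Gamma_q;\QQ)$ is an isomorphism.
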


\begin{proposition}\label{prop:coloops}
If $X^{\circ}$ admitted a one-parameter smoothing, the vanishing
spheres of the nodes $N_9$ and $N_{11}$ would satisfy
$\delta_9=\delta_{11}=0$ in $H_3(X_s;\QQ)$.
\end{proposition}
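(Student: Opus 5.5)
The plan is to combine the membership Lemma~\ref{lem:M} with Poincar\'e duality on $X_s$ and the exact kernel computation of Section~\ref{sec:kernelcomp}. Fix a hypothetical one-parameter smoothing of $X^{\circ}$ and a small $s\neq0$. By Lemma~\ref{lem:M}, every $\gamma\in H_3(X_s;\QQ)$ has Mayer--Vietoris boundary $\partial\gamma\in K(X^{\circ})$. At a node $N_i$ its $L_{N_i}$-component is a fixed nonzero multiple of $(\gamma\cdot\delta_i)$ times the generator. At each cone point its component lies in $\Rsm_p$, which is identified with the root subspace in resolution-side coordinates by Proposition~\ref{prop:kernels} and Lemma~\ref{lem:boundary}, whatever branch the $\dP_6$ points are smoothed through.

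Next, write $\partial\gamma$ in the coordinates $(u_1,\dots,u_{26};s;\alpha,\beta)$ of Section~\ref{sec:kernelcomp}. Since $K(X^{\circ})$ lies in the topological kernel, it lies in $\ker(B^{\mathsf T})$; this is the containment recorded at the end of that section, and it is the only direction needed. At the two $\dP_7$ points, $\Rsm_p$ is the $(-2)$-line spanned by $\varphi_\beta$, so $\partial\gamma$ has $\alpha_1=\alpha_2=0$. Hence $\partial\gamma$ lies in the enlarged restricted kernel $\ker(B^{\mathsf T})\cap\{\alpha_1=\alpha_2=0\}$. The displayed identities $u_9=u_{11}=\alpha_1-\alpha_2$ then give $u_9=u_{11}=0$, independently of the $\dP_6$ profile; this is item~(2).

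Consequently $\gamma\cdot\delta_9=\gamma\cdot\delta_{11}=0$ for every $\gamma\in H_3(X_s;\QQ)$. Since $X_s$ is a closed oriented $6$-manifold, the intersection pairing on $H_3(X_s;\QQ)$ is perfect, so $\delta_9=\delta_{11}=0$.

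The main point needing care is the identification of the node coordinate $u_i$ with $\gamma\cdot\delta_i$ up to a nonzero constant, with signs and generators matching the node rows of $B$. The identification of $\Rsm_p$ at the $\dP_7$ points with $\{\alpha=0\}$ in the fixed basis of Table~\ref{tab:kperp} needs the same care. The first is handled by Lemma~\ref{lem:M}: a nonzero scalar does not affect vanishing. The second is handled by Lemma~\ref{lem:boundary} together with the fact that the $(-2)$-line is intrinsic to the lattice. The remaining ingredient, the verification of the four linear identities from summed divisor equations, is a finite integral computation that I take from Section~\ref{sec:kernelcomp}.
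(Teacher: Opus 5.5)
Your proposal is correct and follows the paper's proof essentially step for step. Lemma~\ref{lem:M} places $\partial\gamma$ in $K(X^{\circ})\subset\ker(B^{\mathsf T})$, Proposition~\ref{prop:kernels} with Lemma~\ref{lem:boundary} puts the $\dP_7$ components on the $(-2)$-lines so that $\alpha_1=\alpha_2=0$, item~(2) of Section~\ref{sec:kernelcomp} then forces $u_9=u_{11}=0$, and nondegeneracy of the intersection pairing gives $\delta_9=\delta_{11}=0$; your explicit use of the identities $u_9=u_{11}=\alpha_1-\alpha_2$ is just the derivation of item~(2) recorded in that section.
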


\begin{proof}
By Lemma~\ref{lem:M} every boundary class
$\partial\gamma$ lies in $K(X^{\circ})$; its $\dP_7$-components lie
on the $(-2)$-lines by Proposition~\ref{prop:kernels}, and
Lemma~\ref{lem:boundary} identifies these with the $(-2)$-lines of
the resolution-side coordinates of Section~\ref{sec:kernelcomp}, so
$\partial\gamma$ lies in the enlarged restricted kernel, on which
$u_9$ and $u_{11}$ vanish identically (item~(2) there).  Since the $u_q$-component of $\partial\gamma$ is a
nonzero multiple of $\gamma\cdot\delta_q$, nondegeneracy of the
intersection pairing gives $\delta_9=\delta_{11}=0$.
\end{proof}

\begin{theorem}\label{thm:main}
$X^{\circ}$ admits no one-parameter smoothing.
\end{theorem}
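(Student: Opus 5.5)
The plan is to argue by contradiction: assume a one-parameter smoothing $\cX\to\Disk$ of $X^{\circ}$ and play Proposition~\ref{prop:coloops} against Lemma~\ref{lem:P}. First I check the hypotheses. By Section~\ref{sec:X0}, $X^{\circ}$ is connected and compact, with $\omega_{X^{\circ}}\cong\cO_{X^{\circ}}$ and $h^1(\cO_{X^{\circ}})=0$. Its thirty singular points are analytically nodes, $\dP_7$-cones and $\dP_6$-cones. The fixed crepant resolution $\Xh$ of Section~\ref{sec:matrix} is small over the nodes, and over each cone point it is the vertex blowup with a single exceptional del Pezzo surface, because the star subdivision at each face-interior point is exactly the vertex blowup of the local cone. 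So Sections~\ref{sec:periods}--\ref{sec:surgery} apply.

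Fix small $s\neq0$. Proposition~\ref{prop:coloops} gives $\delta_9=0$ in $H_3(X_s;\QQ)$. Lemma~\ref{lem:P}, applied at the ordinary double point $N_9$, gives $\delta_9(s)\neq0$ for all small $s\neq0$. These contradict each other, so no smoothing exists. The argument does not depend on the degree-$6$ profile. Proposition~\ref{prop:coloops} uses only the restriction $\alpha_1=\alpha_2=0$ at the two $\dP_7$ points, and item~(2) of Section~\ref{sec:kernelcomp} shows $u_9$ vanishes on the enlarged restricted kernel regardless of the $\dP_6$ branches. Equivalently, Theorem~\ref{thm:necessity}(1) would require some $\kappa\in K(X^{\circ})$ with $\kappa_{N_9}\neq0$. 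But $K(X^{\circ})$ sits inside $\ker(B^{\mathsf T})\cap\{\alpha_1=\alpha_2=0\}$, by the containment stated at the end of Section~\ref{sec:kernelcomp} together with Lemma~\ref{lem:boundary}, and there $u_9\equiv0$.

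As a second, independent route I would use a $\dP_7$ point. Theorem~\ref{thm:necessity}(2) would give $\kappa\in K(X^{\circ})$ with $\beta_1\neq0$. However, the displayed identity $\beta_1=0$ holds on all of $\ker(B^{\mathsf T})$, in the basis of Table~\ref{tab:kperp}. In basis-free form this is item~(2). This route relies on Lemma~\ref{lem:D} and its explicit period $4\pi^2$ instead of Lemma~\ref{lem:P}.

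The main obstacle is not the logic but the identification that sends sweep-side classes into resolution-side coordinates. The $\dP_7$ condition $\kappa_p\in\Rsm_p$ must become $\alpha_i=0$ in the coordinates of Table~\ref{tab:kperp}. This needs Lemma~\ref{lem:boundary} to be compatible with the marking, and it needs the $(-2)$-line to be intrinsic. I would emphasize that this step holds because the $(-2)$-line is unique up to sign in $K^{\perp}(\dP_7)$, so any isometry preserves it. The remaining input is the exact integral verification of the four linear identities, which I would only cite: summing the stated divisor-pairing equations.
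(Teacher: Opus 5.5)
Your proposal is correct and follows the paper's proof. The primary argument plays Lemma~\ref{lem:P} at the node $N_9$ against Proposition~\ref{prop:coloops}. Your second route, through the $\dP_7$ point covered by $E_{7,1}$, uses Theorem~\ref{thm:necessity}(2) together with the identical vanishing of $\beta_1$ on the unrestricted $\ker(B^{\mathsf T})$ (Section~\ref{sec:kernelcomp}, item~(1)); this is exactly the paper's alternative proof, including your reliance on Lemma~\ref{lem:boundary} and the uniqueness of the $(-2)$-line to read $\Rsm_p$ as $\{\alpha_i=0\}$.
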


\begin{proof}
Suppose one exists.  Every germ of $X^{\circ}$ is then smoothed; in
particular the node $N_9$ is, so Lemma~\ref{lem:P} gives
$\delta_9\neq0$ in $H_3(X_s;\QQ)$, contradicting
Proposition~\ref{prop:coloops}.

Alternatively, avoiding the nodes altogether: the point covered by
$E_{7,1}$ is smoothed through the unique component of its miniversal
base, so Theorem~\ref{thm:necessity}(2) produces
$\kappa\in K(X^{\circ})$ whose $E_{7,1}$-component is a nonzero
element of the $(-2)$-line, i.e.\ (Lemma~\ref{lem:boundary} and
Table~\ref{tab:kperp}) has nonzero $\beta_1$-coordinate; but
$\beta_1$ vanishes identically already on the unrestricted
$\ker(B^{\mathsf T})$ (Section~\ref{sec:kernelcomp}, item~(1)).
\end{proof}

\begin{remark}\label{rem:independence}
The two proofs are independent in the period lemma and in the
kernel coordinate used (Lemma~\ref{lem:P} with the vanishing of $u_9$ in
Section~\ref{sec:kernelcomp}(2), versus Lemma~\ref{lem:D} with the vanishing
of $\beta_1$ in Section~\ref{sec:kernelcomp}(1), which item~(2) restates in
basis-free form), and share the membership and
boundary-compatibility inputs (Lemmas~\ref{lem:M}
and~\ref{lem:boundary}) and the divisorial rows of
Table~\ref{tab:kperp}.
\end{remark}

\begin{corollary}\label{cor:def}
Let $X$ satisfy the standing hypotheses of Theorem~\ref{thm:necessity} and
admit no one-parameter smoothing.  Then no fiber of a small representative
of the miniversal deformation of $X$ is smooth; consequently $X$ is not
smoothable over any base germ, and neither is any small deformation of $X$.
Likewise, no formal arc in the miniversal deformation of $X$ meets all the
local smoothing loci.  Here a formal arc $\operatorname{Spf}\CC[[t]]\to\Def(X)$
\emph{meets the smoothing locus} of a singular point $p$ if the zeroth Fitting
ideal of the coordinate ring of the relative critical locus of the induced
deformation of $(X,p)$ over $\CC[[t]]$, a finite $\CC[[t]]$-module, is nonzero, that is, if that
deformation has smooth generic fiber.  In particular all of this holds for
$X^{\circ}$, by Theorem~\ref{thm:main}.
\end{corollary}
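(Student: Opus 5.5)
The plan is to reduce each assertion to the nonexistence of one-parameter smoothings by a curve-selection argument.

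\emph{Fibers of the miniversal family.} Let $\cX\to(B,0)$ be a small representative of the miniversal deformation of $X$. It exists by Grauert's theorem, with compact fibers and $B$ a complex-analytic germ. The locus $Z\subset B$ of parameters with singular fiber is the image of the relative critical locus. That locus is closed and analytic, and it is proper over $B$ because the family is proper. So $Z$ is a closed analytic subset of $B$ by Remmert's proper mapping theorem.

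Suppose some fiber $X_b$ is smooth. Then $b\notin Z$, so $B\smallsetminus Z$ is a nonempty open subset accumulating at $0$ after shrinking. By the curve selection lemma for the analytic germ pair $(B,Z)$, there is a holomorphic arc $c\colon(\Disk,0)\to(B,0)$ with $c(s)\notin Z$ for $s\neq0$. The delicate point is that $0$ lies in the closure of $B\smallsetminus Z$: this holds because small representatives can be shrunk, and any smooth fiber persists on an open set containing points arbitrarily close to $0$.

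The pullback $c^*\cX\to\Disk$ is flat and proper, has central fiber $X$, and has smooth fibers off $0$. It is therefore a one-parameter smoothing, which contradicts the hypothesis. Smoothability over an arbitrary base germ is excluded by the same argument: any deformation over a base germ is pulled back from the miniversal family, so a smooth fiber there would be a smooth miniversal fiber. For a small deformation $X_b$, a smoothing of $X_b$ would give smooth fibers of the miniversal family near $b$, since the miniversal family is versal at nearby points by openness of versality. That again yields smooth fibers near $0$.

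\emph{Formal arcs.} Let $\operatorname{Spf}\CC[[t]]\to\Def(X)$ meet every local smoothing locus. I will pass to a convergent arc. The condition ``zeroth Fitting ideal of the relative critical module at $p$ is nonzero'' is preserved under truncation modulo $t^N$ for large $N$. Approximate the formal arc by a convergent arc agreeing with it to high order; Artin approximation in the analytic setting provides this. The Fitting ideals then remain nonzero, because nonvanishing is detected by a fixed finite order in $t$: the Fitting ideal is generated by a power $t^{k}$, and $k$ is determined modulo $t^{k+1}$.

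The resulting convergent arc gives a one-parameter deformation that is smooth near each singular point of $X$ for $s\neq0$. Away from the singular points the fibers are smooth for small $s$ by properness. So this arc is a one-parameter smoothing, again a contradiction.

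\emph{Main obstacle.} I expect the stability of the Fitting ideal condition under approximation to be the hardest step. The formal arc need not lie in an analytic space, since $\Def(X)$ may be obstructed, so Artin approximation must be applied to the equations cutting out the base; with that caveat the stability is an exercise. The application to $X^{\circ}$ then follows from Theorem~\ref{thm:main}.
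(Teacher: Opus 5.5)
Your proposal is correct and follows the paper's proof essentially step for step. The steps are: curve selection in the miniversal base to produce a holomorphic disk that is a one-parameter smoothing; versality for arbitrary base germs and openness of versality for small deformations; and, for formal arcs, Artin approximation modulo $t^{k+1}$ (with $k$ the largest $t$-adic order of the local Fitting ideals), base change of Fitting ideals, and properness to pass from smoothness near $\Sing X$ to global smoothness. The deviations are only cosmetic. You invoke a complex-analytic curve selection directly, where the paper selects a real semianalytic arc and complexifies it. Also, the accumulation of smooth fibers at $0$ is cleanest taken as the contrapositive hypothesis (smooth fibers in every neighborhood of $0$), since openness of the smooth locus alone does not move a single smooth fiber toward $0$.
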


\begin{proof}
The miniversal deformation of the compact $X$ exists
\cite{Grauert74} and its discriminant is a closed analytic subset of
the base.  If some fiber arbitrarily close to the origin were smooth,
the curve selection lemma would produce a
real-analytic arc through the origin with smooth generic fiber
(the complement of the discriminant is semianalytic, so by
\cite[\S19, Prop.~2]{Lojasiewicz65} it contains a semianalytic arc
ending at the origin, which by \cite[Lemma~6.3(1)]{BM88} is the image
of $(0,\varepsilon)$ under a real-analytic map on
$(-\varepsilon,\varepsilon)$; for real algebraic sets see
\cite{Milnor68});
complexifying (a real-analytic function vanishing on a real segment
of a disk vanishes identically, and the discriminant preimage of the
complexified arc is discrete) gives a holomorphic disk, that is, a
one-parameter smoothing, contrary to hypothesis.  Small
deformations are covered by openness of versality.  For the formal
statement, suppose a formal arc meets every local smoothing locus, and let
$k$ be the largest $t$-adic order of the zeroth Fitting ideals in the
definition.  By Artin approximation \cite{Artin68} applied to the
analytic miniversal deformation, some analytic arc agrees with the formal
one modulo $t^{k+1}$.  Fitting ideals commute with base change, so the
corresponding Fitting ideals of the analytic arc agree with those of the
formal arc modulo $t^{k+1}$ and are therefore nonzero: for small $t\neq0$
the fibers of the analytic arc are smooth near every singular point
of $X$.  Since the singular locus of the
fibers is closed in the total space and meets the central fiber only
in $\Sing X$, these fibers are, for
small $t$, singular at most inside the Milnor-ball
neighborhoods of those points, hence smooth everywhere, so the arc is a
one-parameter smoothing, again contrary to hypothesis.
\end{proof}

\begin{remark}\label{rem:mirror}
The two nodal coloops quantify how far $X^{\circ}$ is from the nodal
theory: no relation among the twenty-six exceptional curves alone is
nonzero at $N_9$ or $N_{11}$, so a smoothing relation would have to include
classes from the exceptional divisors.  Items~(2)--(3) of
Section~\ref{sec:kernelcomp} show that mixed relations satisfying the local
Milnor-fiber conditions still have zero coefficients at those nodes.
The mirror partner of
$X^{\circ}$ is treated in Section~\ref{sec:partner}.
\end{remark}

\begin{table}\centering
\caption{The $24$ boundary lattice points of $\Delta=\Delta_{\mathbb
F_1}$: the vertices $v_1,\dots,v_{22}$ in the order of
\cite[Theorem~6.1]{PaperIII}, the interior point $v_{23}$ of the
hexagonal face and the interior point $v_{24}$ of the $\mathbb F_1$
face.}\label{tab:partnerrays}
\begin{tabular}{llll}
\toprule
$v_1=(1,0,0,0)$ & $v_2=(0,1,0,0)$ & $v_3=(1,-1,0,0)$ &
$v_4=(0,0,1,0)$\\
$v_5=(0,0,0,1)$ & $v_6=(0,0,1,-1)$ & $v_7=(0,0,-1,1)$ &
$v_8=(0,0,0,-1)$\\
$v_9=(0,0,-1,0)$ & $v_{10}=(-1,1,0,0)$ & $v_{11}=(0,-1,0,0)$ &
$v_{12}=(-1,1,-1,1)$\\
$v_{13}=(0,-1,0,-1)$ & $v_{14}=(0,-1,-1,0)$ & $v_{15}=(-1,1,-1,0)$ &
$v_{16}=(-1,0,0,-1)$\\
$v_{17}=(-1,0,-1,1)$ & $v_{18}=(-1,-1,0,-1)$ & $v_{19}=(-2,1,-1,1)$ &
$v_{20}=(-2,1,-1,0)$\\
$v_{21}=(-1,-1,-1,0)$ & $v_{22}=(-2,0,-1,0)$ & $v_{23}=(-1,0,-1,0)$ &
$v_{24}=(-1,0,0,0)$\\
\bottomrule
\end{tabular}
\end{table}

\begin{table}\centering
\caption{The two exceptional surfaces of $\Xh_{\Delta}$: the cyclic
boundary rays, the intersection matrix $I$ in the Picard basis of the
boundary divisors at the rays listed, the canonical class $K$, and the
$K^{\perp}$ functionals in the dual basis.  As in Table~\ref{tab:kperp},
$\varphi_{s_3}$ spans the $A_1$ summand and
$\varphi_{s_1},\varphi_{s_2}$ the $A_2$ summand, here with
$\varphi_{s_1}\cdot\varphi_{s_2}=-1$.  The $\mathbb F_1$ lattice
$K^{\perp}$ has no $(-2)$-class.}\label{tab:partnersurf}
\begin{tabular}{lll}
\toprule
surface & lattice data & $K^{\perp}$ rows\\
\midrule
$E_{6,\Delta}$ over $v_{23}$, rays &
$I_6=\begin{psmallmatrix*}[r]-1&1&0&0\\1&-1&1&0\\0&1&-1&1\\
0&0&1&-1\end{psmallmatrix*}$ at $(v_9,v_{15},v_{20},v_{22})$, &
$\begin{aligned}\varphi_{s_1}&=(1,-1,0,1)\\
\varphi_{s_2}&=(0,-1,1,0)\\
\varphi_{s_3}&=(1,-1,1,-1)\end{aligned}$\\
$(v_{21},v_{14},v_9,v_{15},v_{20},v_{22})$ &
$K=(-1,-2,-2,-1)$ &\\[1.2em]
$E_{\mathbb F_1}$ over $v_{24}$, rays &
$I_{\mathbb F_1}=\begin{psmallmatrix*}[r]0&1\\1&-1
\end{psmallmatrix*}$ at $(v_{19},v_{22})$, &
$\varphi_{t}=(-2,3)$\\
$(v_{18},v_4,v_{19},v_{22})$ & $K=(-3,-2)$ &\\
\bottomrule
\end{tabular}
\end{table}

\section{The mirror partner}\label{sec:partner}
The polytope $\Delta$ itself carries a Calabi--Yau hypersurface, the
mirror partner of $X^{\circ}$ in the doubly isolated pair.  Write
$X_{\Delta}\subset\PP_{\Delta}$ for a Batyrev-regular anticanonical
hypersurface, with the convention of Section~\ref{sec:X0} ($\PP_{\Delta}$ has
the face fan of $\Delta$ as its fan and $\Delta^{\vee}$ as its anticanonical
polytope), and $v_1,\dots,v_{22}$ for the vertices of $\Delta$ in
the order of \cite[Theorem~6.1]{PaperIII}, reproduced in
Table~\ref{tab:partnerrays}.  By that theorem $X_{\Delta}$ has twenty
ordinary double points, one $\dP_6$-cone point over the hexagonal face
$\conv\{v_9,v_{14},v_{15},v_{20},v_{21},v_{22}\}$, and one point whose
germ is the anticanonical cone over $\mathbb F_1$, over the
quadrilateral face $\conv\{v_4,v_{18},v_{19},v_{22}\}$.  As in
Section~\ref{sec:X0}, $X_{\Delta}$ is an anticanonical Cartier divisor
in a Gorenstein Fano toric fourfold, so
$\omega_{X_{\Delta}}\cong\cO_{X_{\Delta}}$ and
$h^1(\cO_{X_{\Delta}})=0$, and every germ is analytically the cone
over its face because every dual edge has lattice length one.  The
last germ
is reduced-rigid, so $X_{\Delta}$ admits no smoothing \cite{PaperI}.
The reduced-rigid $\mathbb F_1$ germ already prevents a smoothing of
$X_{\Delta}$.  Independently, Theorem~\ref{thm:partial} shows that the global
obstruction is also present on $X_{\Delta}$ and does not use that germ.

\begin{theorem}[necessity with germs left singular]\label{thm:partial}
Let $X$ be a connected compact complex threefold with $\omega_X\cong\cO_X$ and
$h^1(\cO_X)=0$ whose singular points are ordinary double points,
anticanonical cones over $\dP_6$ or $\dP_7$, and anticanonical cones
over $\mathbb F_1$; let $\Xh\to X$ be a crepant resolution as in
Section~\ref{sec:intro}.  Let $\cX\to\Disk$ be any one-parameter
deformation of $X$ and let $\Sigma$ be the set of points at which the
nearby fiber is singular, a set containing every $\mathbb F_1$-cone
point by Lemma~\ref{lem:rigid}.  Put
\[
K^{\flat}(X;\Sigma)=
\Bigl\{\kappa\in\ker\Bigl(\bigoplus_pH_2(L_p;\QQ)\to H_2(\Xh;\QQ)
\Bigr)\ :\ \kappa_p\in \Rsm_p\text{ at every cone point }p\notin\Sigma
\Bigr\}.
\]
Then $K^{\flat}(X;\Sigma)$ contains a class nonzero at $q$ for every
node $q\notin\Sigma$, for every $\dP_7$ point $q\notin\Sigma$, and for
every $\dP_6$ point $q\notin\Sigma$ smoothed through the one-dimensional
component of its miniversal base.  For $\Sigma=\varnothing$ this is
Theorem~\ref{thm:necessity}.
\end{theorem}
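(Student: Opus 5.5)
The proof will run the argument of Theorem~\ref{thm:necessity} on the truncated manifold $X_s^{\flat}$ of Lemma~\ref{lem:Mpartial}, with Poincar\'e--Lefschetz duality in place of Poincar\'e duality.

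Fix small $s\neq0$. By Lemma~\ref{lem:rigid} every $\mathbb F_1$-cone point lies in $\Sigma$. Hence the cone points outside $\Sigma$ are $\dP_6$ or $\dP_7$ points whose induced arcs are honest local smoothings. Remark~\ref{rem:partialform} gives the relative form $\Omega$ and the nowhere-vanishing $3$-form $\Omega_s$ on the smooth locus, which contains $X_s^{\flat}$. That construction uses $H^1(L_p;\ZZ)=0$, including for the $\mathbb F_1$ links, together with the Gorenstein adjunction argument for the possibly singular fiber.

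\textbf{Nonvanishing.} Let $q\notin\Sigma$ be a node. The local computation of Lemma~\ref{lem:P} is unchanged, since its cycle $\delta_q$ lies in $\Mil_q\subset X_s^{\flat}$. If $\delta_q$ were zero in $H_3(X_s^{\flat};\QQ)$, a multiple of it would bound a $4$-chain in $X_s^{\flat}$. Stokes' theorem for the closed form $\Omega_s$ would then force the period $F(\tilde u(s),s)=\tilde u(s)\cdot(\text{unit})$ to vanish, which is false. So $\delta_q\neq0$ in $H_3(X_s^{\flat};\QQ)$.

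Likewise, at a $\dP_7$ point, or at a $\dP_6$ point smoothed through the line component, the argument of Lemma~\ref{lem:D} applies verbatim. It uses only the local equivariant family, the unit $g$, and the explicit cycle of Lemma~\ref{lem:D1}, all of which lie in $\Mil_q$. The conclusion is that $H_3(\Mil_q;\QQ)\to H_3(X_s^{\flat};\QQ)$ is injective.

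\textbf{Duality and membership.} Poincar\'e--Lefschetz duality on the compact oriented manifold with boundary $X_s^{\flat}$ pairs $H_3(X_s^{\flat};\QQ)$ perfectly with $H_3(X_s^{\flat},\partial X_s^{\flat};\QQ)$. So we can choose $\gamma$ in the relative group with $\gamma\cdot\delta_q\neq0$, respectively $\gamma\cdot\nu_*\neq0$ for the generator $\nu$ of $H_3(\Mil_q;\QQ)$. Lemma~\ref{lem:Mpartial} then gives a class $\kappa\in K^{\flat}(X;\Sigma)$ determined by $\gamma$.

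At a node, $\kappa_q$ is a fixed nonzero multiple of $\gamma\cdot\delta_q$, hence nonzero. At a cone point $q\notin\Sigma$, $\kappa_q$ is the image of $\gamma$ under restriction to $H_3(\Mil_q,L_q;\QQ)$ followed by the isomorphism $\partial$ onto $\Rsm_q$. This restriction is nonzero, because it is dual to the inclusion of $H_3(\Mil_q)$ and $\gamma\cdot\nu_*\neq0$. So $\kappa_q$ is a nonzero element of $\Rsm_q$. Proposition~\ref{prop:kernels}, Section~\ref{sec:match} and Lemma~\ref{lem:boundary} identify $\Rsm_q$ with the $(-2)$-line or the $A_1$ summand in resolution-side coordinates. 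For $\Sigma=\varnothing$ we have $X_s^{\flat}=X_s$, the relative and absolute groups coincide, and the statement reduces to Theorem~\ref{thm:necessity}.

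\textbf{Main obstacle.} The step needing most care is checking that the restriction in the duality step really is the Lefschetz dual of the inclusion $H_3(\Mil_q)\to H_3(X_s^{\flat})$. This rests on excision, using that $\partial X_s^{\flat}\subset W$ is disjoint from $\Mil_q$, as in the proof of Lemma~\ref{lem:Mpartial}. A second point to check is that the period arguments need only cycles and bounding chains inside $X_s^{\flat}$. This is what allows singular fibers near $\Sigma$.
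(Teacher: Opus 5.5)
Your proposal is correct and follows the paper's proof essentially step for step. The paper uses the same four ingredients: the form $\Omega_s$ from Remark~\ref{rem:partialform}; the period arguments of Lemmas~\ref{lem:P} and~\ref{lem:D}, read in $H_3(X_s^{\flat};\QQ)$ through Stokes' theorem inside $X_s^{\flat}$; Poincar\'e--Lefschetz duality, which produces $\gamma$ in the relative group; and Lemma~\ref{lem:Mpartial}, which converts $\gamma$ into $\kappa\in K^{\flat}(X;\Sigma)$ with the required nonzero component. Your explicit check that restriction to $H_3(\Mil_q,L_q;\QQ)$ is Lefschetz dual to the inclusion of $H_3(\Mil_q;\QQ)$ is the same excision the paper records in the proof of Lemma~\ref{lem:Mpartial}.
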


\begin{proof}
The proof of Theorem~\ref{thm:necessity} applies with
Remark~\ref{rem:partialform} and Lemma~\ref{lem:Mpartial} in place of
Lemma~\ref{lem:M}.  At a node $q\notin\Sigma$, Lemma~\ref{lem:P}
gives $\delta_q\neq0$ in $H_3(X_s^{\flat};\QQ)$, since a bounding
$4$-chain in $X_s^{\flat}$ would make the period of the closed form
$\Omega_s$ vanish; Poincar\'e--Lefschetz duality on the compact
oriented manifold with boundary $X_s^{\flat}$ produces
$\gamma\in H_3(X_s^{\flat},\partial X_s^{\flat};\QQ)$ with
$\gamma\cdot\delta_q\neq0$; and Lemma~\ref{lem:Mpartial} converts
$\gamma$ into a class $\kappa\in K^{\flat}(X;\Sigma)$ with
$\kappa_q\neq0$.  At the cone points $q\notin\Sigma$ in the two
covered branches the same substitution is made in the argument of
Theorem~\ref{thm:necessity}(2),(3), with Lemma~\ref{lem:D} read in
$H_3(X_s^{\flat};\QQ)$.
\end{proof}

Whatever the deformation and whatever $\Sigma$, the space
$K^{\flat}(X;\Sigma)$ is by definition contained in
$\ker\bigl(\bigoplus_pH_2(L_p;\QQ)\to H_2(\Xh;\QQ)\bigr)$, so a
coordinate that vanishes identically on the latter is forced in every
case.  On $X_{\Delta}$ that is what happens.

Fix the smooth projective crepant subdivision of the face fan of
$\Delta$ that uses all twenty-four boundary lattice points, with the
star subdivision at $v_{23}$ and $v_{24}$ and the twenty square
diagonals
\[
\begin{aligned}
&(v_2,v_7),(v_2,v_9),(v_3,v_8),(v_3,v_9),(v_{10},v_{12}),
(v_{10},v_{15}),(v_{11},v_{13}),(v_{11},v_{14}),(v_{11},v_{17}),\\
&(v_{10},v_{16}),(v_9,v_{12}),(v_{12},v_{17}),(v_{14},v_{17}),
(v_9,v_{13}),(v_{13},v_{16}),(v_{15},v_{16}),(v_{15},v_{19}),
(v_{14},v_{18}),\\
&(v_{18},v_{20}),(v_{19},v_{21}),
\end{aligned}
\]
labeled $n_1,\dots,n_{20}$; the subdivision is smooth, with $94$
maximal cones, and projective.  The corresponding circuit
relations are
\[
\begin{aligned}
&v_1{+}v_{12}=v_2{+}v_7,\ \
v_1{+}v_{15}=v_2{+}v_9,\ \
v_1{+}v_{13}=v_3{+}v_8,\ \
v_1{+}v_{14}=v_3{+}v_9,\\
&v_2{+}v_{19}=v_{10}{+}v_{12},\ \
v_2{+}v_{20}=v_{10}{+}v_{15},\ \
v_3{+}v_{18}=v_{11}{+}v_{13},\ \
v_3{+}v_{21}=v_{11}{+}v_{14},\\
&v_5{+}v_{21}=v_{11}{+}v_{17},\ \
v_6{+}v_{20}=v_{10}{+}v_{16},\ \
v_7{+}v_{15}=v_9{+}v_{12},\ \
v_7{+}v_{19}=v_{12}{+}v_{17},\\
&v_7{+}v_{21}=v_{14}{+}v_{17},\ \
v_8{+}v_{14}=v_9{+}v_{13},\ \
v_8{+}v_{18}=v_{13}{+}v_{16},\ \
v_8{+}v_{20}=v_{15}{+}v_{16},\\
&v_{12}{+}v_{20}=v_{15}{+}v_{19},\ \
v_{13}{+}v_{21}=v_{14}{+}v_{18},\ \
v_{16}{+}v_{22}=v_{18}{+}v_{20},\ \
v_{17}{+}v_{22}=v_{19}{+}v_{21},
\end{aligned}
\]
in the order $n_1,\dots,n_{20}$; in each of them the right-hand pair
is the chosen diagonal, so the circuit row of
Section~\ref{sec:matrix} is $-1$ there and $+1$ on the left-hand pair.
Choosing the other diagonal negates the row and changes nothing
below.
Table~\ref{tab:partnersurf} records the two exceptional surfaces.  The
lattice $K^{\perp}(\mathbb F_1)$ is a line spanned by a primitive
class of square $-8$, and it contains no $(-2)$-class, consistently
with the reduced-rigidity of that germ.  No condition is imposed at
that point in any case: by Lemma~\ref{lem:rigid} it lies in $\Sigma$
for every deformation, so no Milnor fiber and no $\Rsm_p$ is defined
there.  Stacking the twenty node rows, the three hexagon rows and
the single $\mathbb F_1$ row gives an integral
matrix $B_{\Delta}\in\operatorname{Mat}_{24\times24}(\ZZ)$, the
columns being indexed by the divisors of the twenty-four boundary
lattice points, which span $\Pic$ of the resolution without being a
basis.  The
following are exact integral computations, reproduced by the ancillary
programs from the printed data of
Tables~\ref{tab:partnerrays} and~\ref{tab:partnersurf}:
\begin{enumerate}
\item $\rk B_{\Delta}=16$ and $\dim\ker(B_{\Delta}^{\mathsf T})=8$;
the node subsystem has rank $14$, with coloops exactly $n_9$ and
$n_{10}$;
\item on the unrestricted kernel $\ker(B_{\Delta}^{\mathsf T})$ the
coordinates of $n_9$, of $n_{10}$, of the $\mathbb F_1$ row and of the
$A_1$ row of the hexagon vanish identically, so no branch restriction
is needed;
\item the column of $B_{\Delta}$ at the divisor of $v_{24}$ is
identically zero, the node rows having no entry there and the
divisorial rows having entry $\langle\varphi,K_E\rangle=0$; the same
holds at $v_{23}$.  Discarding the $\mathbb F_1$ row and that column
altogether leaves a system on which $n_9$ and $n_{10}$ are still
forced.
\end{enumerate}
\begin{theorem}\label{thm:partner}
No one-parameter deformation of $X_{\Delta}$ has nearby fiber smooth
at the node $n_9=\conv\{v_5,v_{11},v_{17},v_{21}\}$, and none has
nearby fiber smooth at $n_{10}=\conv\{v_6,v_{10},v_{16},v_{20}\}$.
The same two conclusions hold for deformations over arbitrary analytic
base germs.  In particular no analytic deformation of $X_{\Delta}$
smooths all twenty nodes and the $\dP_6$ point, whatever it does at the
$\mathbb F_1$ point.  By item~(2) and
Theorem~\ref{thm:partial}, the same argument applied to the $A_1$
coordinate shows that no analytic deformation of $X_{\Delta}$ smooths
its $\dP_6$ point through the one-dimensional component of its miniversal
base.  The reduced-rigidity of the $\mathbb F_1$ germ excludes the
smoothings of $X_{\Delta}$; the deformations excluded here are ones
that it leaves available.
\end{theorem}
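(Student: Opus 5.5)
The plan is to combine Theorem~\ref{thm:partial} with the unrestricted kernel computation, item~(2) of the preceding list. Fix a one-parameter deformation $\cX\to\Disk$ of $X_{\Delta}$, and let $\Sigma$ be its set of points left singular. By Lemma~\ref{lem:rigid}, $\Sigma$ contains the $\mathbb F_1$ point. Suppose, for contradiction, that $n_9\notin\Sigma$. Theorem~\ref{thm:partial} then produces $\kappa\in K^{\flat}(X_{\Delta};\Sigma)$ with $\kappa_{n_9}\neq0$.

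The key point is that $K^{\flat}(X_{\Delta};\Sigma)$ is contained in the topological kernel $\ker\bigl(\bigoplus_pH_2(L_p;\QQ)\to H_2(\Xh_\Delta;\QQ)\bigr)$, whatever $\Sigma$ is. In the node and divisorial coordinates of Section~\ref{sec:matrix}, this kernel is in turn contained in $\ker(B_\Delta^{\mathsf T})$. The containment holds because each row of $B_\Delta$ is the pairing of the image $(\iota_p)_*e_{p,i}$ with the ambient divisor classes $D_j$. A class dying in $H_2(\Xh_\Delta)$ therefore pairs to zero with every $D_j$, and these restrict to classes on $\Xh_\Delta$. Only this one containment is needed, as in Section~\ref{sec:kernelcomp}. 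Item~(2) says the $n_9$ coordinate vanishes identically on $\ker(B_\Delta^{\mathsf T})$, which contradicts $\kappa_{n_9}\neq0$. The same argument handles $n_{10}$.

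For the statement about the $\dP_6$ point, suppose it is not in $\Sigma$ and is smoothed through the line component. Theorem~\ref{thm:partial} gives $\kappa$ with nonzero component in $\Rsm_q=A_1$. Lemma~\ref{lem:boundary} identifies this $A_1$, intrinsically, with the span of $\varphi_{s_3}$ in Table~\ref{tab:partnersurf}. Since $A_1\perp A_2$ and the summand decomposition is canonical, the $s_3$ coordinate is basis-free once $\kappa_q\in A_1$. Item~(2) forces that coordinate to vanish, which is again a contradiction. The "in particular" clause follows at once: any deformation smoothing all nodes would have $n_9\notin\Sigma$.

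The main obstacle is the passage to arbitrary analytic base germs. Here I would argue as in Corollary~\ref{cor:def}. Suppose a deformation over a germ $(T,0)$ has fibers smooth near $n_9$ arbitrarily close to $0$. Pull it back to a small representative, and note that the locus where the fiber is singular near $n_9$ is the preimage of the local discriminant, a closed analytic subset. Curve selection (\L ojasiewicz, then complexification) yields a holomorphic disk through $0$ that meets this subset only at the origin. Pulling back along the disk gives a one-parameter deformation with $n_9\notin\Sigma$, which the first part excludes. One must check that "nearby fiber smooth at $n_9$" is governed by a closed analytic discriminant of the local germ, pulled back from the node's $u$-line via $\tilde u$. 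This is immediate from the Kas--Schlessinger description; for the $\dP_6$ line-branch claim, the analogous condition uses the reduced line component of the base and Section~\ref{sec:miniversal}.
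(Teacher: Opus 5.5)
Your proposal is correct and follows the paper's proof essentially step for step. Theorem~\ref{thm:partial} produces a class that is nonzero at $n_9$, at $n_{10}$, or in the $A_1$ summand. The containment $K^{\flat}(X_{\Delta};\Sigma)\subset\ker(B_{\Delta}^{\mathsf T})$ together with item~(2) rules that class out. The passage to arbitrary base germs is the curve-selection argument of Corollary~\ref{cor:def}, run inside the preimage of the line component for the $\dP_6$ claim.

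The only difference is cosmetic: the paper first replaces the base germ by its reduction. Your argument uses this implicitly, since only actual nearby fibers matter.
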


\begin{proof}
Suppose the nearby fiber is smooth at $n_9$, so $n_9\notin\Sigma$.  By
Theorem~\ref{thm:partial} there is
$\kappa\in K^{\flat}(X_{\Delta};\Sigma)$ with $\kappa_{n_9}\neq0$.
But $K^{\flat}(X_{\Delta};\Sigma)$ is contained in
$\ker(B_{\Delta}^{\mathsf T})$ for every $\Sigma$, read in the
coordinates above, and the $n_9$ coordinate vanishes identically
there.  The argument for $n_{10}$ is the same.

For the $\dP_6$ assertion, suppose first that the base is a disk and
that the local classifying arc lies in the one-dimensional component of
the miniversal base.  Theorem~\ref{thm:partial} then gives a class whose
$A_1$ coordinate is nonzero, contrary to item~(2).

Now let the deformation have an arbitrary analytic base germ $(S,0)$.
Since the assertions concern actual nearby fibers, we may replace $S$
by $S_{\mathrm{red}}$.  If one of the excluded local smoothings occurred
arbitrarily close to the origin, the curve-selection argument used in
Corollary~\ref{cor:def}, applied to the complement of the inverse image
of the relevant local discriminant, would give a holomorphic disk
through the origin whose punctured fibers have the same local smoothing
property.  For the $\dP_6$ assertion, apply curve selection inside the
inverse image of the one-dimensional component and outside its
discriminant.  Pullback to the resulting disk contradicts the
one-parameter conclusions just proved.
\end{proof}

\begin{corollary}\label{cor:pair}
Both members of the doubly isolated mirror pair $(X^{\circ},X_{\Delta})$ carry the
obstruction of this paper: on $X^{\circ}$, in the form of
Theorem~\ref{thm:necessity}, it excludes every smoothing; on
$X_{\Delta}$, in the form of Theorem~\ref{thm:partial}, it excludes
every analytic deformation smoothing $n_9$ or $n_{10}$.  On
$X_{\Delta}$ the two mechanisms are independent: by item~(2) the
forcing holds on the unrestricted kernel, where no local condition is imposed
at the $\mathbb F_1$ point, and by item~(3) it persists when the $\mathbb F_1$
data are discarded altogether, whereas the reduced-rigidity of that germ
is a statement about it alone.
\end{corollary}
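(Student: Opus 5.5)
The corollary asks for three assertions, and each reduces to results already proved in this section. For $X^{\circ}$ I would combine Theorem~\ref{thm:main} with Corollary~\ref{cor:def}. Theorem~\ref{thm:main} excludes every one-parameter smoothing. It does so in the form of Theorem~\ref{thm:necessity}, either through Lemma~\ref{lem:P} at $N_9$ together with Proposition~\ref{prop:coloops}, or through Theorem~\ref{thm:necessity}(2) at $E_{7,1}$ together with the vanishing of $\beta_1$. Corollary~\ref{cor:def} then upgrades this to every base germ and to formal arcs. So on $X^{\circ}$ the obstruction excludes every smoothing, as claimed.

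For $X_{\Delta}$, the statement that the obstruction excludes every analytic deformation smoothing $n_9$ or $n_{10}$ is exactly Theorem~\ref{thm:partner}. I would simply cite it, recalling that it rests on Theorem~\ref{thm:partial} via Lemma~\ref{lem:Mpartial} and Remark~\ref{rem:partialform}, and on the curve-selection reduction from arbitrary base germs to disks.

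The independence claim is the only part that needs an argument rather than a citation, and I would make it in two steps. First, Theorem~\ref{thm:partial} places $K^{\flat}(X_{\Delta};\Sigma)$ inside $\ker(B_{\Delta}^{\mathsf T})$ with no local condition imposed at the $\mathbb F_1$ point. It uses only the membership Lemma~\ref{lem:Mpartial}, which needs no Milnor fiber there because the point lies in $\Sigma$ by Lemma~\ref{lem:rigid}. Item~(2) shows that the $n_9$ and $n_{10}$ coordinates already vanish on this unrestricted kernel, so the contradiction uses no property of the $\mathbb F_1$ germ beyond its presence as a boundary component.

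Second, item~(3) shows that even the $\mathbb F_1$ row and the $v_{24}$ column can be deleted while the forcing at $n_9,n_{10}$ persists. Hence the combinatorial input does not depend on the $\mathbb F_1$ data at all. By contrast, the reduced-rigidity obstruction is the purely local fact from \cite[(9.1)(iv)]{Altmann97} that the quadrilateral has no lattice Minkowski decomposition. The only place where the $\mathbb F_1$ germ enters the global argument is Lemma~\ref{lem:rigid}, which puts that point in $\Sigma$. Since Theorem~\ref{thm:partial} allows arbitrary $\Sigma$, this use is harmless: the conclusion would hold for any choice of $\Sigma$ containing that point.

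The main obstacle I expect is making the word ``independent'' precise. I would phrase it as the statement that the global forcing holds for every admissible $\Sigma$, even after the $\mathbb F_1$ data are discarded by item~(3), and stress that it is about deformations which the local obstruction does not forbid.
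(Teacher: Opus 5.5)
Your proposal is correct and matches how the paper obtains this corollary: the paper gives no separate proof and treats it as a summary of Theorem~\ref{thm:main} with Corollary~\ref{cor:def} for $X^{\circ}$, Theorem~\ref{thm:partner} (via Theorem~\ref{thm:partial}) for $X_{\Delta}$, and items~(2) and~(3) of Section~\ref{sec:partner} for the independence claim. Your added observation is also accurate: the $\mathbb F_1$ germ enters the global argument only through Lemma~\ref{lem:rigid}, which places that point in $\Sigma$, and this makes the word ``independent'' somewhat more precise than the statement itself does.
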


\section{What the criterion measures}\label{sec:measures}

The rows of Section~\ref{sec:matrix} were introduced as a device: the
$K^{\perp}$ functionals stand in for the exceptional curve class that a
del Pezzo cone point does not have.  This section identifies their span with a
rational form of the germ's $T^1$.  It also makes
the finite test of Section~\ref{sec:test} readable and locates the criterion
against the smoothing theorems of Gross~\cite{Gross97}.

\subsection{The combinatorial setting}\label{sec:combsetting}
Throughout this section and the next, $\Pi\subset N_{\RR}\cong\RR^4$ is a
reflexive $4$-polytope with polar $\Pi^{*}$, $\PP_{\Pi}$ is the Gorenstein
Fano toric fourfold of its face fan, and $X\subset\PP_{\Pi}$ is an
anticanonical hypersurface regular in Batyrev's sense
\cite[Definition~3.1.1]{Batyrev94}.  (The letter $\Delta$ stays reserved
for the fixed $\Delta_{\mathbb F_1}$ of Section~\ref{sec:X0}; the
polytope relevant to $X^{\circ}$ is $\Pi=\Delta^{\vee}$, and the
polytope relevant to $X_{\Delta}$ is $\Pi=\Delta$.)

For a $2$-face $G$ of $\Pi$ with dual edge $G^{*}\subset\Pi^{*}$ of
lattice length $\ell(G^{*})$, the hypersurface meets the closed orbit
of $\Cone(G)$ in $\ell(G^{*})$ points, and at each of them the germ of
$X$ is the affine toric threefold of $\Cone(G)$, exactly as in
Section~\ref{sec:X0} \cite{Batyrev94}, \cite[Proposition~3.1]{PaperI}.  The
germ is singular
unless $G$ is a unimodular triangle, and it is isolated when every edge
of $G$ has lattice length one; we call such a $G$ a \emph{singular
$2$-face} and write $(Y_G,0)$ for the germ.  A unit square gives a node;
a pentagon with unit edges and one interior lattice point gives $C(\dP_7)$
and a hexagon with unit edges and one interior lattice point gives
$C(\dP_6)$, the interior point
being the origin of the fan of the exceptional surface in each case.
(A polygon with exactly one interior lattice point is reflexive, and among
the sixteen reflexive polygons the only ones with five or six vertices and no
further boundary lattice points are the fan polygons of $\dP_7$ and $\dP_6$.)

\begin{definition}\label{def:admissible}
$\Pi$ is \emph{admissible} if every edge of $\Pi$ has lattice length
one; every $2$-face of $\Pi$ has either three vertices and no interior
lattice point, or four vertices and no interior lattice point, or five or
six vertices and exactly one interior lattice point; and $\ell(G^{*})=1$
for every singular $2$-face $G$.
\end{definition}

The four types are the unimodular triangle, the unit square, the pentagon
and the hexagon listed above: with unit edges, a quadrilateral
without interior points has normalized area $2$ by Pick's formula, so
both triangles cut off by a diagonal are unimodular, and convexity then
forces $v_a+v_c=v_b+v_d$: it is a unimodular parallelogram.  Note that a quadrilateral with unit edges
and \emph{one} interior point is a different face, giving a cone point over
a smooth toric del Pezzo surface of degree $8$, either $\mathbb F_1$ or
$\PP^1\times\PP^1$; both are excluded here, and the $\mathbb F_1$ case is the
one that occurs on $\Delta$ and is treated in Section~\ref{sec:partner}.

For admissible $\Pi$ the hypersurface $X$ satisfies the standing
hypotheses of Theorem~\ref{thm:necessity}, by the argument of
Section~\ref{sec:X0}: $\Sing X$ is finite with one point per singular
$2$-face, and every germ is a node or an anticanonical $\dP_6$ or
$\dP_7$ cone.  The crepant resolution is taken throughout from a maximal
projective crepant partial desingularization of $\PP_{\Pi}$
\cite[\S2.2]{Batyrev94} whose fan refines the star subdivision of the cone
over each pentagonal or hexagonal $2$-face at its interior lattice point,
restricted to $X$ \cite[\S4.2]{Batyrev94}.  Such a desingularization exists.
Pulling the origin and then the interior points of these faces gives a
regular subdivision of the lattice points of $\Pi$ \cite[Lemma~4.3.12]{DLRS10}
whose restriction to each such face is the star subdivision; it need not use
the lattice points interior to facets.  Place every lattice point on the lower
envelope of a height function inducing this subdivision and perturb by
$\varepsilon$ times a strictly convex function and then generically: for small
$\varepsilon$ this gives a regular triangulation that refines the subdivision
and uses every lattice point \cite[Lemma~2.3.16]{DLRS10}.  Every maximal simplex
contains the origin, the only interior lattice point of $\Pi$, and the
triangles $(c,v_j,v_{j+1})$, with $c$ the interior point of the face, are
unimodular and are not subdivided, so the triangulation still restricts to the
star subdivision on each such face.  The
resolution is then small over the nodes and is the blowup of the vertex at
each cone point, as Theorem~\ref{thm:necessity} requires.  An arbitrary
maximal triangulation need not have this property: it may cut off a
unimodular corner of a pentagon or hexagon (Section~\ref{sec:boundary}).  The polytope $\Delta^{\vee}$ of Section~\ref{sec:X0} is
admissible, its $68$ two-faces being $38$ unimodular triangles, $26$ unit
squares, two pentagons and two hexagons; $\Delta_{19}$ and $\Delta_{20}$
of Section~\ref{sec:test} are admissible; $\Delta$ itself is not, because
the quadrilateral face carrying its $\mathbb F_1$-cone point is neither a
square nor a triangle, but it satisfies the weaker hypothesis of
Proposition~\ref{prop:kernelequal} below.

\subsection{Local ray relations and \texorpdfstring{$T^1$}{T1}}
\label{sec:channelsT1}

\begin{proposition}\label{prop:T1channels}
Let $G$ be a singular $2$-face of $\Pi$ with unit edges and vertices
$v_1,\dots,v_n$ in cyclic order, and put
\[
R_G:=\Bigl\{a\in\QQ^{\,n}\ :\ \textstyle\sum_j a_jv_j=0\Bigr\}.
\]
Then $R_G$ has dimension $n-3$, Altmann's model gives a natural isomorphism
\[
T^1_{(Y_G,0)}\ \cong\ R_G\otimes_{\QQ}\CC.
\]
If $G$ is a unit square, or a pentagon or hexagon with one interior
lattice point, the rows of Section~\ref{sec:matrix} at $G$, read in the
coordinates of its rays, form a $\QQ$-basis of $R_G$.
In the pentagon and hexagon cases, the crepant resolution has an
exceptional surface $E$ over the germ, and $R_G$ is
$K^{\perp}_{\QQ}\subset\Pic(E)^{\vee}_{\QQ}$; when $G$ is a unit square it is
the line spanned by the intersection vector of the exceptional curve of a
small resolution.
\end{proposition}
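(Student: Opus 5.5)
The proof will proceed in three steps: count $\dim R_G$, match $R_G$ with $T^1$ through Altmann's description, and then check that the rows of Section~\ref{sec:matrix} give a basis of $R_G$ at each face type.

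\emph{Dimension.} The vertices $v_1,\dots,v_n$ span the $3$-dimensional space $\Cone(G)\otimes\QQ$, since $G$ is a $2$-face not through the origin. So the linear map $\QQ^n\to\QQ^3$, $a\mapsto\sum_j a_jv_j$, is surjective, and its kernel $R_G$ has dimension $n-3$.

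\emph{Identification with $T^1$.} I will use Altmann's description of $T^1$ in the Gorenstein degree \cite[(6.5)]{Altmann95}, which gives $\dim T^1=n-3$. There $T^1$ is the complexified space of Minkowski-type deformations of the polygon modulo translations. Explicitly, it consists of vectors $(t_j)$ of edge-length changes with $\sum_j t_j(v_{j+1}-v_j)=0$, taken modulo the image of the affine translations. For unit edges, reindexing $a_j=t_{j-1}-t_j$ turns the closing condition into an element of $R_G$, by summation by parts. Naturality holds because both sides are built from the same combinatorial data. Equivalently, I can identify $R_G$ with $\Pic(\text{toric resolution over }G)^\vee$ restricted to the relations of the rays. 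The dimension count from the first step then shows the map is an isomorphism.

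\emph{The rows.} For a unit square, $R_G$ is the line spanned by $v_a+v_c-v_b-v_d$. The node row takes the values $-1$ at $D_a,D_c$ and $+1$ at $D_b,D_d$, so read in ray coordinates it is this circuit up to sign; it is also the intersection vector of $\Gamma$.

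For a pentagon or hexagon, I will use the toric fact that $\Pic(E)_\QQ$ is the quotient of $\QQ^n$ (boundary divisors) by the characters of $M$ restricted to the plane of $G$. Its dual $\Pic(E)^\vee_\QQ$ is therefore $\{a:\sum a_j\bar v_j=0\}$, where $\bar v_j$ are the rays of the $2$-dimensional fan of $E$. This space has dimension $n-2$. Inside it, $K^\perp$ is the subspace with $\sum_j a_j=0$, because $K_E=-\sum D_j$.

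Lifting to $N$, write $v_j=c+\bar v_j$, where $c$ is the interior point. Then $\sum a_jv_j=(\sum a_j)c+\sum a_j\bar v_j$, and $c$ is independent of the plane directions. Hence $\sum a_jv_j=0$ holds exactly when $a\in K^\perp$, so $R_G=K^\perp_\QQ$.

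The divisorial rows send $D_j$ to $\varphi(D_j|_E)$ and $D_{\mathrm{int}}$ to $\varphi(K_E)=0$. In ray coordinates they are therefore exactly the vectors $(\varphi(D_j))_j$. These lie in $K^\perp$, and they form a basis of it, since the $\varphi$'s are a basis of $K^\perp$ by Table~\ref{tab:kperp}.

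\emph{Main obstacle.} The delicate point is the second step: making the identification with Altmann's $T^1$ precise and natural rather than a mere dimension match. To handle it, I will write Altmann's $T^1$ in degree $-R^*$ as $\bigl(V(G)/\text{translations}\bigr)\otimes\CC$, with $V(G)$ the space of Minkowski summand deformations. I will then check carefully that the summation-by-parts map is well defined on the quotient and injective, using the unit-edge hypothesis so that the edge vectors are primitive.
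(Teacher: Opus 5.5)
Your proposal follows the paper's proof essentially step for step. Altmann's edge dilations $t$ are sent to their jumps across the vertices, and the dimension $n-3$ comes from $\dim\Cone(G)=3$. Your splitting $v_j=c+\bar v_j$ does the job of the paper's remark that $G$ lies in the hyperplane $\langle u_F,\cdot\rangle=-1$: $\sum_ja_jv_j=0$ then forces $\sum_ja_j=0$, which leaves exactly the $K_E$-orthogonal part of $\Pic(E)^{\vee}_{\QQ}$.

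One step is misstated, and as written your planned injectivity check would fail. In Altmann's description the dilation space $\{t:\sum_jt_j(v_{j+1}-v_j)=0\}$ has dimension $n-2$, and translating $G$ changes no $t_j$. $T^1$ is the quotient of this space by the line $\CC\cdot(1,\dots,1)$ of constant dilations, the homothety direction ($G$ itself as a trivial summand), not by translations. Your own description is therefore inconsistent with the $\dim T^1=n-3$ you quote.

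Taken modulo translations only, your summation-by-parts map is not injective, since it kills exactly the constants. Taken modulo the constant line, it is injective for the trivial reason that $t_{j-1}=t_j$ for all $j$ forces $t$ to be constant. Primitivity of the edge vectors plays no role in this. The unit-edge hypothesis enters only through Altmann's theorem that $T^1$ is concentrated in the Gorenstein degree and is given by this formula.

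Finally, the divisorial rows form a basis of $K^{\perp}_{\QQ}$ for every admissible face, not only for the faces of Table~\ref{tab:kperp}. They are defined from a basis $\varphi$ of $K^{\perp}$, and $\varphi\mapsto(\varphi(D_j))_j$ is injective because the $D_j$ span $\Pic(E)$.
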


\begin{proof}
Altmann shows that $T^1$ of the germ is concentrated in the Gorenstein
degree \cite[(6.5) and the subsequent Remark]{Altmann95} and identifies it
with the complex vector space of dilations of the edges of $G$ satisfying the
closing condition, modulo the homothety \cite[(2.2), (2.6)]{Altmann97}; its
dimension is $(n-2)-1$.  Send a dilation $t$, with $t_j$ the dilation of the edge
$(v_j,v_{j+1})$, to its jump $a_j:=t_j-t_{j-1}$ across the vertex $v_j$.
Then
\[
\sum_j a_jv_j=\sum_j t_jv_j-\sum_j t_jv_{j+1}
=-\sum_j t_j\,(v_{j+1}-v_j)=0
\]
by the closing condition; the kernel of $t\mapsto a$ is the constants.
The construction is defined over $\QQ$, and its rational form maps
isomorphically to $R_G$: both spaces have dimension $n-3$ because
$\Cone(G)$ is three-dimensional.  Extending scalars gives the stated
isomorphism with $T^1$.  For the last two clauses, note first that $G$ lies in
the affine hyperplane $\langle u_F,\cdot\rangle=-1$ for either facet
$F\supset G$, so $\sum_ja_jv_j=0$ forces $\sum_ja_j=0$.  If $G$ is a
pentagon or hexagon, the $v_j$ are the rays of the fan of $E$ seen from
the interior lattice point, and dualizing
$0\to M_E\to\QQ^{\,n}\to\Pic(E)_{\QQ}\to0$ identifies
$\{a:\sum_ja_jv_j=0\text{ in the plane of }G\}$ with
$\Pic(E)^{\vee}_{\QQ}$; since $K_E=-\sum_jD_j$ the extra condition
$\sum_ja_j=0$ is orthogonality to $K_E$, and a divisorial row of
Section~\ref{sec:matrix} is $\varphi\mapsto(\langle\varphi,D_j\rangle)_j$
for $\varphi$ in a basis of $K^{\perp}$, which is that identification.
If $G$ is a unit square with cyclic vertices $(a,b,c,d)$, the space is
one-dimensional and spanned by $(-1,1,-1,1)$, because
$v_a+v_c=v_b+v_d$; that is the node row of Section~\ref{sec:matrix}, the
intersection functional of $\Gamma$.
\end{proof}

Thus the $K^{\perp}$ lattice introduced in Section~\ref{sec:matrix} as a
substitute for the missing exceptional curve class is a natural rational
form of the tangent space to the germ's deformation functor.  A
linear relation among the rays of a cone represents a
curve class, with $a_j$ the intersection number against the invariant
divisor $D_j$; at a node the square's relation $v_a+v_c=v_b+v_d$ is the
class of the exceptional curve of a small resolution, and the row
$(-1,1,-1,1)$ of Section~\ref{sec:matrix} is its vector of intersection
numbers.  The proposition says that this reading survives at a del Pezzo
cone point, where the exceptional locus is a surface.  Two consequences
follow.  The interior lattice point of a pentagon or hexagon
carries coefficient $\langle\varphi,K\rangle=0$ in every row, so the rows
are supported on the vertices of the face; and the total number of
relation rows equals $\dim H^0(\mathcal T^1_X)$, which is $36$ for $X^{\circ}$
and $16$ for $X_{19}$.

\subsection{The ambient kernel is the topological kernel}
\label{sec:kernelequal}

\begin{proposition}\label{prop:kernelequal}
Suppose every $2$-face of $\Pi$ is either a unimodular triangle or has
dual edge of lattice length one.  Let $\Xh\to X$ be a smooth hypersurface
resolution induced by a maximal projective crepant partial
desingularization of $\PP_{\Pi}$.  Write $\ell^{*}(G)$ for the number of lattice
points in the relative interior of $G$, and $\ell(G^{*})$ for the lattice
length of the dual edge.  Then the correction term
$\sum_{G}\ell^{*}(G)\bigl(\ell(G^{*})-1\bigr)$, summed over $2$-faces $G$
of $\Pi$, in Batyrev's formula \cite[Corollary~4.5.1]{Batyrev94} for
$h^{1,1}$ vanishes, and the ambient divisor classes span
$H^2(\Xh;\QQ)=H^{1,1}(\Xh;\QQ)$.

Suppose in addition that $X$ has only isolated singularities, all nodes
or anticanonical cones over $\dP_6$, $\dP_7$ or $\mathbb F_1$, and that
$\Xh\to X$ is small over the nodes and is the blowup of the vertex at each
cone point.  Define $B$ by the pairings of
bases of $H_2(L_p;\QQ)$ with ambient divisor classes, as in
Sections~\ref{sec:matrix} and~\ref{sec:partner}.  Then
\[
\ker(B^{\mathsf T})=
\ker\Bigl(\bigoplus_pH_2(L_p;\QQ)\longrightarrow H_2(\Xh;\QQ)\Bigr).
\]
These additional hypotheses hold for admissible polytopes with the
resolution of Section~\ref{sec:combsetting} and for
$\Delta$ with the resolution of Section~\ref{sec:partner}.
\end{proposition}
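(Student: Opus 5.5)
The plan has two parts, matching the two paragraphs of the statement.

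For the first part, I will read off the vanishing of the correction term face by face. A $2$-face $G$ with $\ell^{*}(G)\neq0$ is not a unimodular triangle, so by hypothesis $\ell(G^{*})=1$ and its summand $\ell^{*}(G)(\ell(G^{*})-1)$ is zero. A unimodular triangle has $\ell^{*}(G)=0$. Hence the sum vanishes.

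Batyrev's formula \cite[Corollary~4.5.1]{Batyrev94} then reduces to
\[
h^{1,1}(\Xh)=\ell(\Pi)-5-\sum_{\operatorname{codim}\Theta=1}\ell^{*}(\Theta).
\]
The right-hand side is the number of boundary lattice points of $\Pi$ not interior to facets, minus four. That count is exactly the rank of the image of $\Pic(\PP_{\widehat\Sigma})_{\QQ}\to H^2(\Xh;\QQ)$: Batyrev's proof of the formula identifies the toric part of $H^{1,1}$ with this image, and the correction term is precisely the non-toric contribution. So the restriction map is surjective.

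Since $h^{2,0}(\Xh)=0$ (a Calabi--Yau resolution with $h^1(\cO)=0$, by the computation of Section~\ref{sec:X0}), we also have $H^2=H^{1,1}$. This disposes of the first paragraph. The one point needing care is to quote the form of Batyrev's theorem stating that the toric classes restrict isomorphically, rather than merely counting dimensions. If only the dimension count is available, I would add the injectivity of the restriction from the divisors that meet $X$ via the Lefschetz-type argument in \cite[\S4]{Batyrev94}.

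For the second part, I use the definition $B_{(p,i),j}=D_j\cdot(\iota_p)_*e_{p,i}$. A vector $\kappa$ lies in $\ker(B^{\mathsf T})$ exactly when the class $c=\sum\kappa_{p,i}(\iota_p)_*e_{p,i}\in H_2(\Xh;\QQ)$ pairs to zero with every ambient divisor restricted to $\Xh$. The containment $\supseteq$ is then immediate, as already noted in Section~\ref{sec:kernelcomp}. For $\subseteq$, the first part says the ambient classes span $H^2(\Xh;\QQ)$, and Poincaré duality on the compact smooth $\Xh$ makes $H^2$ pair perfectly with $H_2$. Hence $c=0$.

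What remains is to check that the identification of $(\iota_p)_*e_{p,i}$ used in the rows is the topological pushforward. At a node, $L_q\to\Xh$ factors through the tube around $\Gamma_q$, and the $S^2$ generator maps to $\pm[\Gamma_q]$. At a cone point, $L_p\to\Xh$ factors homotopically through the circle-bundle projection $L_p\to E_p$ followed by $E_p\subset\Xh$, by the local form of the resolution. Under \eqref{eq:linkH2}, $e_{p,i}$ maps to the $K^{\perp}$ class $\varphi$. Its pairing with $D_j$ is $\langle\varphi,D_j|_{E_p}\rangle$, which is what Table~\ref{tab:kperp} and Table~\ref{tab:partnersurf} record.

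Finally, the additional hypotheses must be checked in the two cases. For admissible polytopes, this is the resolution constructed in Section~\ref{sec:combsetting}, which is small over squares and the star subdivision over pentagons and hexagons. For $\Delta$, it is the explicit subdivision of Section~\ref{sec:partner}, where each $2$-face is a triangle or has unit dual edge by \cite[Theorem~6.1]{PaperIII}. I expect the main obstacle to be the faithful use of Batyrev's theorem in the first part; the rest is bookkeeping.
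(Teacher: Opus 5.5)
Your outline agrees with the paper at both ends. The face-by-face vanishing of the correction term is the paper's argument. So is the deduction of $\ker(B^{\mathsf T})=\ker(\bigoplus_pH_2(L_p;\QQ)\to H_2(\Xh;\QQ))$ from spanning together with the perfect pairing $H_2\times H^2\to\QQ$.

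\textbf{The gap is the spanning step.} You argue by dimensions: $h^{1,1}(\Xh)$ equals the number of non-facet-interior boundary points minus four, and you identify this with the rank of the image of $\Pic(\PP_{\widehat\Sigma})_\QQ\to H^2(\Xh;\QQ)$. That identification is the entire content of the step. It says the restrictions of the $D_\rho$ meeting $\Xh$ satisfy no relations beyond the four coming from characters. You neither prove it nor give a usable source for it.
\begin{itemize}
\item Equality of Batyrev's numbers says nothing about the map. The only thing the paper takes from Batyrev's proof is a component count.
\item The standard Lefschetz hyperplane theorem is not available. $\Xh$ is only semiample and big in $\PP_{\widehat\Sigma}$, being pulled back from an ample divisor along a map with positive-dimensional fibers. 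When facet-interior points are used, their divisors miss $\Xh$ altogether.
\end{itemize}
So your fallback, ``a Lefschetz-type argument in \cite[\S4]{Batyrev94}'', is not yet an argument.

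\textbf{How the paper gets spanning.} It skips the count and proves surjectivity directly.
\begin{itemize}
\item Let $Z=\Xh\cap T$ be the nondegenerate affine part and $D=\Xh\smallsetminus Z$.
\item The Danilov--Khovanskii Lefschetz theorem gives $H^2(T;\QQ)\cong H^2(Z;\QQ)$. Hence $H^2(Z;\QQ)$ is pure of weight $4$, and the pure weight-$2$ space $H^2(\Xh;\QQ)$ maps to zero in it.
\item Exactness of $H^2_D(\Xh)\to H^2(\Xh)\to H^2(Z)$, with $H^2_D(\Xh;\QQ)\cong H_4(D;\QQ)$, then shows that the components of $D$ span $H^2(\Xh;\QQ)$.
\item From Batyrev's proof it takes only that $D_\rho\cap\Xh$ has $\ell(G^{*})$ components when $\rho$ is interior to a $2$-face $G$. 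The vanishing correction term therefore makes every component of $D$ a full restriction $D_\rho|_{\Xh}$.
\end{itemize}
Surjectivity is all the kernel equality needs; injectivity is never used.

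\textbf{If you want to keep your route.} Use the same sequence one step to the left:
\[
0=H^1(\Xh;\QQ)\to H^1(Z;\QQ)\cong H^1(T;\QQ)\to H^2_D(\Xh;\QQ).
\]
This shows the relations among component classes are exactly the principal divisors, which is the injectivity your argument was missing. Once you have that sequence, though, the dimension count is redundant.
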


\begin{proof}
A unimodular triangle has $\ell^{*}(G)=0$; every other $2$-face has
$\ell(G^{*})-1=0$ by hypothesis.  Hence every summand of the correction
term vanishes.  In the proof of \cite[Theorem~4.4.2]{Batyrev94}, the
divisor $D_{\rho}$ indexed by an interior lattice point of $G$ meets
$\Xh$ in $\ell(G^{*})$ irreducible components.  Thus
$\ell^{*}(G)\bigl(\ell(G^{*})-1\bigr)$ counts the additional components
beyond one for each such ambient divisor.  When the correction vanishes,
every nonempty restriction of a torus-invariant divisor is irreducible.
These restrictions span $H^2(\Xh;\QQ)$.  Indeed, let $Z\subset\Xh$ be the
part lying over the open torus $T\cong(\CC^*)^4$, a nondegenerate affine
hypersurface, and let $D=\Xh\smallsetminus Z$, a divisor with simple normal
crossings whose components are those restrictions.  By the Lefschetz
theorem of Danilov--Khovanskii \cite[Proposition~3.9 and Remark~3.10]{DK86} the restriction
$H^2(T;\QQ)\to H^2(Z;\QQ)$ is an isomorphism, so $H^2(Z;\QQ)$ is pure of
weight~$4$.  The image of the pure weight-$2$ space $H^2(\Xh;\QQ)$ in it is
therefore zero, so by the exact sequence
$H^2_D(\Xh;\QQ)\to H^2(\Xh;\QQ)\to H^2(Z;\QQ)$, in which
$H^2_D(\Xh;\QQ)\cong H_4(D;\QQ)$ is spanned by the components of $D$, the
space $H^2(\Xh;\QQ)$ is spanned by their classes.
The smooth Calabi--Yau hypersurface $\Xh$ has $h^{2,0}(\Xh)=0$, so
$H^2(\Xh;\QQ)=H^{1,1}(\Xh;\QQ)$.

Under the additional isolated-germ hypotheses, a row of $B$ records the
pairings of a class with the ambient divisors, so a combination of rows
vanishes exactly when the corresponding total class pairs to zero against
every ambient divisor.  The evaluation pairing
$H_2\times H^2\to\QQ$ is perfect; the ambient divisor classes span
$H^2$ by the first part, and therefore the total class pairs to zero
against all of them exactly when it vanishes in $H_2(\Xh;\QQ)$.
\end{proof}

The correction-term hypothesis alone does not imply that $X$ has isolated
singularities; the additional assumptions are needed to interpret the
matrix as the isolated-link pairing used in this paper.
Section~\ref{sec:kernelcomp} uses only the containment: a coordinate
vanishing on the ambient kernel also vanishes on the topological kernel.
The proposition gives equality for the admissible class and the treated
mirror partner.  This equality is needed when applying the converse
criterion of the companion paper~\cite{PaperV} to the matrix kernel.
The correction-term vanishing is distinct from the one that
empties Mavlyutov's mechanism of non-polynomial deformations
\cite{Mav04}: that is the mirror correction, summed over $2$-faces of
$\Pi^{*}$, and it follows from the unit edges of $\Pi$.  Both vanishings
hold for the admissible polytopes and for $\Delta$.

\subsection{\texorpdfstring{$\QQ$}{Q}-factoriality and primitive contractions}
\label{sec:qfact}

Gross proves that a primitive contraction of a divisor
to a point on a smooth Calabi--Yau threefold has smoothable target unless
the exceptional divisor is
$\PP^2$ or $\mathbb F_1$ \cite[Theorem~5.8]{Gross97}; degrees $6$ and
$7$ are allowed.  Namikawa proves that a $\QQ$-factorial
Calabi--Yau threefold with isolated toric Gorenstein singularities, possibly
several, has surjective map $\Def(X)\to\prod_p\Def(X,p)$, and hence is smoothable as soon
as its germs are \cite[Corollary~6]{Namikawa97}.  When a smooth Calabi--Yau
threefold contains a del Pezzo surface $E$ of degree~$6$ whose contraction
alone produces a single singular point $p$, he shows that the image of the
restriction of divisors to $E$ determines the image of the deformations of
the contracted threefold in the deformation space of its germ at $p$, and in
particular which smoothing components of the germ that image contains
\cite[Theorem~10]{Namikawa97}; the divisorial rows of
Section~\ref{sec:matrix} record that restriction modulo $K_E$.  For a threefold with
a single degree-$7$ cone point,
Theorem~\ref{thm:necessity} obstructs smoothing when the root line maps
injectively to $H_2(\Xh;\QQ)$, as stated in
Corollary~\ref{cor:onegerm}.  Gross's primitive contractions have
$\QQ$-factorial targets, for which the root line instead maps to zero.
The following proposition explains the distinction.

\begin{proposition}\label{prop:qfact}
Let $X$ be $\QQ$-factorial and let $\pi\colon\Xh\to X$ be a projective
crepant resolution.  Suppose every irreducible component $E_i$ of
$\operatorname{Exc}(\pi)$ is contracted to a point and the points $\pi(E_i)$
are pairwise distinct.  Then the rows of Section~\ref{sec:matrix} at each of
the corresponding divisorial germs vanish identically.
\end{proposition}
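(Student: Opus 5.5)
The approach is to show that, under $\QQ$-factoriality, every divisor class of $\Xh$ restricts to a multiple of $K_{E_i}$ on each exceptional surface. The divisorial rows at $E_i$ are the functionals $D\mapsto \langle\varphi, D|_{E_i}\rangle$ with $\varphi\in K^{\perp}$, so this is exactly what makes them vanish.

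\emph{Step 1: decompose an arbitrary divisor.} Let $D$ be a Cartier divisor on $\Xh$. Its pushforward $\pi_*D$ is a Weil divisor on $X$. Since $X$ is $\QQ$-factorial, some multiple $m\,\pi_*D$ is Cartier, so $\pi^*(m\,\pi_*D)$ is a Cartier divisor on $\Xh$. The difference $mD-\pi^*(m\,\pi_*D)$ has zero pushforward. It is therefore supported on $\operatorname{Exc}(\pi)$, which gives
\[
mD\ \sim\ \pi^*(m\,\pi_*D)+\sum_i a_iE_i.
\]
This uses that the exceptional locus of $\pi$ has pure codimension one in the relevant part: components contracted to points may be curves over the nodes, but curves contribute no divisor components. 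In fact $\QQ$-factoriality together with smallness would contradict the existence of a node with a small resolution, unless that node is absent. So I expect the hypothesis that every component is a divisor contracted to a point to be used here.

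\emph{Step 2: restrict to a fixed $E_j$.} The pullback $\pi^*(m\,\pi_*D)$ is trivial in a neighborhood of $E_j$, because $E_j$ maps to a point and a Cartier divisor is locally principal near $\pi(E_j)$. So its restriction to $E_j$ is $0$ in $\Pic(E_j)$. For $i\neq j$ the points $\pi(E_i)$ and $\pi(E_j)$ are distinct, so $E_i\cap E_j=\varnothing$ and $E_i|_{E_j}=0$. Finally, adjunction on the crepant resolution, with $K_{\Xh}$ trivial near $E_j$, gives $E_j|_{E_j}=K_{E_j}$. Combining these,
\[
mD|_{E_j}=a_jK_{E_j},
\]
and pairing with any $\varphi\in K_{E_j}^{\perp}$ gives zero.

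\emph{Step 3: pass to $\Pic(\PP_{\widehat\Sigma})$.} The rows of Section~\ref{sec:matrix} are defined on the ambient Picard group, and they factor through restriction to $\Xh$ and then to $E_j$. So each entry is $\langle\varphi, D_k|_{\Xh}|_{E_j}\rangle=0$. More intrinsically, the row entries are $D\cdot(\iota_p)_*e$, and these vanish by Step 2 together with the identification in Proposition~\ref{prop:T1channels}.

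The main obstacle is Step 1's claim that $\pi^*$ of a $\QQ$-Cartier divisor restricts trivially near $E_j$. It needs only local principality at the image point, so it is fine. Two further points need care: the projectivity hypothesis is what makes the rational class bookkeeping legitimate, and the degenerate case in which $E_j$ is not normal crossing does not arise, since the components are disjoint.
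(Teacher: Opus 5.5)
Your proposal is correct and is essentially the paper's proof. Both arguments write $\Pic(\Xh)_\QQ$ as $\pi^*\Cl(X)_\QQ$ plus the span of the exceptional divisors, observe that pullbacks and the other $E_i$ restrict trivially to $E_j$, and use adjunction with crepancy to get $E_j|_{E_j}=K_{E_j}$, which pairs to zero with $K^{\perp}$.

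The one difference is at the start. The paper first uses projectivity, via Koll\'ar--Mori Cor.~2.63, to show that every exceptional component is a divisor. Your Step~1 instead notes, correctly, that curve components never enter the decomposition of $mD$, so neither that step nor projectivity is needed for the conclusion at the divisorial germs. Your side remarks giving a role to projectivity, and to a ``divisor'' hypothesis that the statement does not make, can simply be dropped.
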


\begin{proof}
Because $X$ is $\QQ$-factorial and $\pi$ is projective and birational, the
exceptional locus of $\pi$ has pure codimension one
\cite[Cor.~2.63]{KollarMori}.  Thus its irreducible components
$E_1,\dots,E_r$ are divisors.  By hypothesis they lie over distinct points,
as they do for the divisorial germs of this paper.

$\QQ$-factoriality also makes $\pi^{*}$ defined on $\Cl(X)_{\QQ}$, and
then $\Cl(\Xh)_{\QQ}=\pi^{*}\Cl(X)_{\QQ}\oplus\bigoplus_i\QQ E_i$, with
$\Cl(\Xh)=\Pic(\Xh)$ since $\Xh$ is smooth.  Fix $i$ and write $E=E_i$.
A pullback $\pi^{*}D$ restricts trivially to $E$ because $\pi(E)$ is a
point, and $E_j|_E=0$ for $j\neq i$ because $E_j$ and $E$ lie over
distinct points, so the image of $\Pic(\Xh)_\QQ\to\Pic(E)_{\QQ}$ is
spanned by $E|_E$.  That class is $K_E$: adjunction gives
$K_E=(K_{\Xh}+E)|_E$, and $K_{\Xh}|_E=(\pi^{*}K_X)|_E=0$ by crepancy and
the same observation.  Every divisor class therefore pairs to zero
against $K_E^{\perp}$, and the rows, which are those pairings, vanish.
\end{proof}

Under the hypotheses of Proposition~\ref{prop:qfact}, the rows at every
divisorial germ are zero, so the kernel contains its entire
$K^{\perp}$ and the test imposes no obstruction there.  In the primitive
type~II case with exceptional divisor $\dP_6$ or $\dP_7$,
\cite[Theorem~5.8]{Gross97} supplies the smoothing.
The proposition also
says that a $\QQ$-factorial $X$ admits no \emph{projective} crepant
resolution with an exceptional curve; it does not say that $X$ has no
nodes, and indeed a general one-nodal quintic threefold is factorial.
Contrapositively:

\begin{corollary}\label{cor:defect}
If the block of rows at a divisorial germ has positive rank, then $X$ is not
$\QQ$-factorial.  Since primitivity in \cite[Theorem~5.8]{Gross97}
supplies $\QQ$-factoriality of the target, that theorem, like
\cite[Corollary~6]{Namikawa97}, is silent on
every $X$ carrying such a germ, and the statements never
disagree. \qed
\end{corollary}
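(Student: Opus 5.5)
The corollary is the contrapositive of Proposition~\ref{prop:qfact}, followed by a reading of the hypotheses of Gross's and Namikawa's theorems.

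First I will check that the setting of this paper meets the hypotheses of Proposition~\ref{prop:qfact}, apart from $\QQ$-factoriality. The crepant resolution $\pi\colon\Xh\to X$ of Section~\ref{sec:combsetting} is projective. Its exceptional components are the surfaces $E_p$, each contracted to its own cone point $p$, and the small curves $\Gamma_i$, each contracted to its own node. The points are pairwise distinct because different singular points carry different exceptional loci.

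Now suppose $X$ were $\QQ$-factorial. Then Proposition~\ref{prop:qfact} applies, and the rows at every divisorial germ vanish identically. Its proof in fact shows more: the exceptional locus has pure codimension one, so no nodes with curve fibers could occur. Only the divisorial conclusion is needed here, since it is exactly the negation of the hypothesis that the block of rows at some divisorial germ has positive rank. Hence $X$ is not $\QQ$-factorial.

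For the second sentence I will note two facts. A primitive type~II contraction in \cite[Theorem~5.8]{Gross97} has $\QQ$-factorial target: the relative Picard number is one, so every Weil divisor is $\QQ$-Cartier. \cite[Corollary~6]{Namikawa97} assumes $\QQ$-factoriality outright. Neither theorem therefore applies to an $X$ carrying a divisorial germ whose row block has positive rank, and on such $X$ they cannot contradict Theorem~\ref{thm:necessity}.

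The only step needing care is confirming that the resolution in question is projective, so that \cite[Cor.~2.63]{KollarMori} applies within Proposition~\ref{prop:qfact}. I would take this from the regular-triangulation construction of Section~\ref{sec:combsetting}, or from the explicit projective subdivisions used for $X^{\circ}$ and $X_\Delta$.
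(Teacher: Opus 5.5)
Your proposal is correct and follows the paper's argument: the corollary is the contrapositive of Proposition~\ref{prop:qfact}, with projectivity and a single exceptional component over each singular point supplied by the construction of Section~\ref{sec:combsetting}, and the second sentence is read off from the $\QQ$-factoriality built into \cite[Theorem~5.8]{Gross97} and \cite[Corollary~6]{Namikawa97}. One justification should be tightened: relative Picard number one yields a $\QQ$-factorial target only because a type~II contraction is divisorial with smooth source, since small primitive contractions also have relative Picard number one but never have $\QQ$-factorial targets (the paper simply asserts this point).
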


Here $\QQ$-factoriality is algebraic: every algebraic Weil divisor on $X$
is $\QQ$-Cartier, or equivalently
$\bigl(\Cl(X)/\Pic(X)\bigr)\otimes\QQ=0$.  This must be distinguished
from $\QQ$-factoriality of the analytic germ.  The analytic cone germs
here are not $\QQ$-factorial: at a cone over a del Pezzo surface $E$
the analytic local class group is $\Pic(E)/\ZZ K_E$, of rank
$\rho(E)-1$, which is Namikawa's invariant $\sigma$ \cite{Namikawa97}.  For
the toric del Pezzos of the framework that rank is also $\dim T^1=n-3$
(Section~\ref{sec:channelsT1}).  In general Namikawa proves
$\mu+\sigma\le\dim T^1\le2\mu+\sigma$ for isolated rational Gorenstein
threefold singularities \cite[Theorem~1]{Namikawa97}, where $\mu$ is the
dimension of the cokernel of
$d\log\colon H^1(\widetilde Y,\cO^*)\otimes\CC\to H^1(\widetilde Y,\Omega^1)$
on a good resolution $\widetilde Y$ (it is not the Milnor number), so
$\sigma$ and $\dim T^1$ can differ only when $\mu>0$.
A threefold can be algebraically $\QQ$-factorial while its analytic germ
is not: the target of a primitive type~II contraction
is the example that matters here, and a general one-nodal quintic is the
familiar one.

Thus a positive block rank witnesses the failure of $\QQ$-factoriality,
whereas the relation kernel measures cancellation
among the local classes.  The two quantities are not interchangeable: on
$X_7$ the block rank is $2$ while the relation kernel is zero.  On $X^{\circ}$
every block has the maximal rank, $2$ at each $\dP_7$ point and $3$ at each
$\dP_6$ point.

For admissible polytopes every divisorial block has positive rank.
Indeed, the exceptional divisor $E$
over a singular $2$-face is a toric surface whose boundary divisors are the
restrictions of the ambient toric divisors, so
$\Pic(\PP_{\widehat\Sigma})_\QQ\to\Pic(E)_\QQ$ is surjective and the block
at that germ has the full rank $n-3$ of
Proposition~\ref{prop:T1channels}, which is $2$ at a pentagon and $3$ at a
hexagon.  Thus no admissible $X$ carrying a pentagonal or hexagonal face is
$\QQ$-factorial; a node contributes no exceptional divisor to which this
argument could be applied.
Proposition~\ref{prop:qfact} therefore functions here as a consistency
check against \cite[Theorem~5.8]{Gross97} rather than as a discriminating
invariant.  In the purely nodal case, a $\QQ$-factorial threefold has no
small projective resolution.  On a small analytic resolution of a general
one-nodal quintic, the exceptional curve class vanishes in $H_2$, so the
entire local line belongs to the relation kernel and the necessity test
does not obstruct smoothing (Section~\ref{sec:test}).  The mixed
criterion uses $K^{\perp}$ in place of this local curve class.

\section{The criterion as a finite test}
\label{sec:test}

\subsection{The test}\label{sec:thetest}
Theorem~\ref{thm:necessity} is a finite test for any concrete
candidate: compute the matrix of Section~\ref{sec:matrix} from the
polytope (node rows; $K^{\perp}$ bases of the star fans of
the pentagon and hexagon faces), restrict to the branch subspaces,
and check whether a coordinate required to be nonzero by
Theorem~\ref{thm:necessity} is forced to vanish.  A
forced node coordinate contradicts Lemma~\ref{lem:P}; a forced
$\dP_7$ root coordinate contradicts Lemma~\ref{lem:D}; a forced
$s_3$-coordinate contradicts Lemma~\ref{lem:D} on profiles through
the line component.  The test uses nothing about the example beyond Batyrev regularity and its face inventory.

Applied across the admissible polytopes of
Definition~\ref{def:admissible} the test has a wide range, and the two
ends of that range are worth separating.  At the small end it is nearly immediate: with a single germ the criterion
asks only whether the germ's root subspace survives in $H_2(\Xh;\QQ)$, which
is a two-row rank computation.  That case is
Section~\ref{sec:onegerm}, and Section~\ref{sec:census} shows it is not
isolated: among admissible polytopes with at most nine vertices
carrying a pentagonal $2$-face, seventy-six of seventy-seven are
obstructed, sixty-seven of them with one singular point.  The interest
of $X^{\circ}$ is that it is at the other end: thirty germs of three
kinds, both $\dP_6$ branch profiles, and a $36\times26$ matrix of rank
$21$ whose $15$-dimensional kernel forces nothing at a node, and where the
four coordinates that decide the question appear once the $\dP_7$ branch
restriction is imposed.  Sections~\ref{sec:X20} and~\ref{sec:X19}
sit between the two, one where the obstruction is carried entirely by
the divisorial direction and one where the test is silent.

\subsection{A one-germ illustration}\label{sec:onegerm}
With one singular point, the criterion reduces to whether the local root line
survives in the homology of the crepant resolution.

\begin{corollary}\label{cor:onegerm}
Let $X$ be as in Theorem~\ref{thm:necessity} with exactly one singular
point $p$, an anticanonical $\dP_7$ cone, and suppose the $(-2)$-line
$\Rsm_p\subset H_2(L_p;\QQ)$ maps injectively into $H_2(\Xh;\QQ)$.  Then
$X$ admits no one-parameter smoothing.
\end{corollary}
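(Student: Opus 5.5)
The plan is to argue by contradiction from Theorem~\ref{thm:necessity}(2). Suppose $X$ admits a one-parameter smoothing. Since $p$ is the only singular point, the direct sum $\bigoplus_p H_2(L_p;\QQ)$ reduces to the single summand $H_2(L_p;\QQ)$. The space $K(X)$ is then the set of $\kappa\in H_2(L_p;\QQ)$ satisfying two conditions. First, $\kappa\in\Rsm_p$, the local Milnor-fiber condition. Second, $\kappa$ maps to zero under the inclusion-induced map $H_2(L_p;\QQ)\to H_2(\Xh;\QQ)$.

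At a $\dP_7$ point the miniversal base has a single reduced component (Section~\ref{sec:miniversal}). Any induced local smoothing therefore goes through it, and Theorem~\ref{thm:necessity}(2) applies without a profile choice. It produces $\kappa\in K(X)$ whose $p$-component, which is all of $\kappa$, is a nonzero element of $\Rsm_p$.

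Here $\Rsm_p$ is the $(-2)$-line. This uses Proposition~\ref{prop:kernels} together with Lemma~\ref{lem:boundary}, which guarantees that the $(-2)$-line in the hypothesis, read in resolution-side coordinates via the circle-bundle projection $L_p\to E_p$, is the same subspace as the actual kernel $\Rsm_p$ of the Milnor fiber. The hypothesis says that $\Rsm_p\to H_2(\Xh;\QQ)$ is injective. A nonzero $\kappa\in\Rsm_p$ therefore has nonzero image in $H_2(\Xh;\QQ)$, contradicting $\kappa\in K(X)$. Hence no one-parameter smoothing exists.

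There is no real obstacle, since all the analytic input (Lemma~\ref{lem:D}) and the topological input (Lemmas~\ref{lem:M} and~\ref{lem:boundary}) are already packaged in Theorem~\ref{thm:necessity}. The one point to state carefully is that the line in the hypothesis is identified with the actual $\Rsm_p$ independently of the smoothing. Lemma~\ref{lem:boundary} supplies this, because the $(-2)$-line is intrinsic to the lattice $K^\perp$ and is unique up to sign. I would also note that the map $H_2(L_p;\QQ)\to H_2(\Xh;\QQ)$ is the one induced by $L_p\subset W\subset\Xh$, as in the theorem. Optionally I would remark that, by Proposition~\ref{prop:kernelequal}, in the toric setting injectivity can be checked as the nonvanishing of the $\varphi_\beta$ row of $B$, which is the two-row rank computation mentioned in Section~\ref{sec:thetest}.
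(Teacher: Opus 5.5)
Your proposal is correct and is essentially the paper's own argument: with a single singular point, $K(X)$ is the kernel of $\Rsm_p\to H_2(\Xh;\QQ)$, which vanishes by hypothesis, whereas Theorem~\ref{thm:necessity}(2) would supply a nonzero element of it. You also use Lemma~\ref{lem:boundary} explicitly to identify the hypothesized $(-2)$-line with the actual $\Rsm_p$ of a putative smoothing; the paper leaves this implicit in the notation of the statement, and your handling of it is correct.
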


\begin{proof}
With one singular point $K(X)=\{\kappa\in \Rsm_p:\kappa\mapsto0$ in
$H_2(\Xh;\QQ)\}$, which is $0$ by hypothesis.  A one-parameter smoothing
would produce, by Theorem~\ref{thm:necessity}(2), a
$\kappa\in K(X)$ whose $p$-component is a nonzero element of $\Rsm_p$.
\end{proof}

The hypothesis is a finite check: it suffices that the $(-2)$-row of
Section~\ref{sec:matrix} be nonzero, since a class pairing nontrivially
with an ambient divisor does not die in $H_2(\Xh;\QQ)$.

Such an $X$ exists at the seven-vertex bottom of the pentagonal census.  Let
\[
\begin{gathered}
\Delta_7=\conv\{(1,0,0,0),(0,1,0,0),(0,0,1,0),(-6,-4,-1,0),\\
(0,0,0,1),(-6,-4,0,-1),(-3,-2,1,-1)\},
\end{gathered}
\]
reflexive with seven vertices, seven facets and sixteen edges, all of
lattice length one.  Of its sixteen $2$-faces, fifteen are unimodular
triangles and one is a pentagon with a single interior lattice point and
dual edge of lattice length one, so $\Delta_7$ is admissible and
$X_7\subset\PP_{\Delta_7}$ has \emph{exactly one} singular point, an
anticanonical $\dP_7$ cone, which is smoothable as a germ.  As in
Section~\ref{sec:X0}, $\omega_{X_7}\cong\cO_{X_7}$ and
$h^1(\cO_{X_7})=0$, and $\Xh_7$ has $h^{1,1}=4$ and $h^{2,1}=214$.  The
two divisorial rows are linearly independent, so the relation-row block has
rank $2$, the maximum, the ambient kernel is zero, and
Corollary~\ref{cor:onegerm} applies: \emph{$X_7$ admits no one-parameter
smoothing.}

Since the block has positive rank,
Corollary~\ref{cor:defect} says $X_7$ is not $\QQ$-factorial, so
\cite[Theorem~5.8]{Gross97} does not apply and there is
no conflict with the smoothability of primitive type~II contractions
with $\dP_7$ exceptional divisor.  For $X_7$, the obstruction requires only
the rank calculation for its two divisorial rows.  At first order the same
data show that every first-order deformation of $X_7$ is locally trivial at
its singular point: by \cite{PaperV} (see Section~\ref{sec:discussion}) the
image of the localization map on first-order deformations is the
complexified relation kernel, which is zero here; compare
\cite[Theorem~10]{Namikawa97} for a contracted del Pezzo surface of
degree~$6$.  Vanishing at first order does not by itself exclude a
smoothing whose local classifying arc vanishes to order two or more; the
period argument behind Corollary~\ref{cor:onegerm} applies to arcs of every
order and excludes it.

\subsection{A census of the admissible polytopes with a pentagonal
\texorpdfstring{$2$}{2}-face, at up to nine vertices}
\label{sec:census}
As a bounded computational application, the test can be applied to every
polytope in a finite part of the Kreuzer--Skarke classification.

\begin{proposition}\label{prop:census}
Among the $3\,905\,789$ reflexive $4$-polytopes with at most nine vertices
in the Kreuzer--Skarke classification \cite{KS00}, exactly $77$ are
admissible and carry
a pentagonal $2$-face, one pentagon apiece: one at seven vertices, thirteen
at eight, sixty-three at nine.  No pentagonal $2$-face occurs below seven
vertices.  None of the $77$ carries a hexagonal $2$-face, so each of the
corresponding $X$ has exactly one $\dP_7$ cone point and no $\dP_6$ point;
the search records that each has at most two nodes, the inventories being
$67$ with the cone point alone, $9$ with one node in addition, and $1$ with
two nodes.  The test of Theorem~\ref{thm:necessity} obstructs all but one of
them, so those $76$ admit no one-parameter smoothing, and by
Corollary~\ref{cor:def} they are not smoothable over any base germ.  On the remaining polytope, which
has nine vertices and two nodes, the test is silent.
\end{proposition}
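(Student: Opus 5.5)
The proof is a finite computation, organized as an enumeration followed by the test of Section~\ref{sec:thetest} applied to each survivor. \emph{First}, I run through the Kreuzer--Skarke list of reflexive $4$-polytopes with at most nine vertices. For each polytope $\Pi$ I check admissibility in the sense of Definition~\ref{def:admissible}: all edges of lattice length one; every $2$-face a unimodular triangle, a unit square, or a unit-edge pentagon or hexagon with exactly one interior lattice point; and $\ell(G^{*})=1$ on every singular $2$-face, computed from the polar. I keep those carrying a pentagonal $2$-face and record the face inventory, which gives the number of nodes, $\dP_7$ points and $\dP_6$ points by Section~\ref{sec:combsetting}. This should yield the counts $1,13,63$ at seven, eight and nine vertices, with no hexagons and at most two squares. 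I expect these to be certified by the ancillary programs of Appendix~\ref{app:cert}. The absence of pentagons below seven vertices is part of the same search.

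\emph{Second}, for each of the $77$ polytopes I fix a resolution as in Section~\ref{sec:combsetting}: a regular fine triangulation restricting to the star subdivision on the pentagon. I then assemble the matrix $B$ of Section~\ref{sec:matrix}. It has one node row per unit square, with entries $\pm1$ on the circuit, and two divisorial rows $\varphi_\alpha,\varphi_\beta$ at the pentagon, obtained by composing the restriction $\Pic\to\Pic(E)$ with a $K^{\perp}$ basis whose second vector is the $(-2)$-class. By Proposition~\ref{prop:kernelequal} the kernel of $B^{\mathsf T}$ equals the topological kernel, although only the containment is needed. I impose the branch condition $\alpha=0$, which is Proposition~\ref{prop:kernels} read through Lemma~\ref{lem:boundary}. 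Then, on $\ker(B^{\mathsf T})\cap\{\alpha=0\}$, I check whether the $\beta$ coordinate or some node coordinate vanishes identically. If either does, Theorem~\ref{thm:necessity}(1) or (2) is violated, so there is no one-parameter smoothing, and Corollary~\ref{cor:def} upgrades this to non-smoothability over any base germ.

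For the $67$ one-point cases, Corollary~\ref{cor:onegerm} applies directly: it suffices that the $\varphi_\beta$ row be nonzero. This is automatic here, because the block has full rank $2$ by the surjectivity $\Pic(\PP_{\widehat\Sigma})_\QQ\to\Pic(E)_\QQ$ noted after Corollary~\ref{cor:defect}. For the ten cases with one or two nodes, the computation is a small rank check on a matrix with at most $4$ rows. For the one remaining polytope I exhibit an explicit $\kappa$ nonzero at every covered coordinate, which shows the test is silent there.

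The main obstacle is the enumeration itself: scanning the roughly $3.9$ million polytopes and computing their $2$-faces, dual edge lengths and interior points reliably. That rests entirely on the correctness and completeness of the computer search, not on any new theory. The per-polytope kernel checks are routine once the data are in hand.
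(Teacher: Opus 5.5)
Your proposal is correct and follows the paper's route. The enumeration, face inventories and kernel checks are delegated to the certified computation of Appendix~\ref{app:cert}. The only branch restriction is $\alpha=0$ at the single pentagon, and a forced node or $(-2)$-coordinate contradicts Theorem~\ref{thm:necessity} via Lemmas~\ref{lem:P} and~\ref{lem:D}. Corollary~\ref{cor:def} then upgrades this to arbitrary base germs.

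The one cosmetic difference is in the $67$ single-germ cases. You obtain them a priori from the full rank of the divisorial block, using surjectivity of $\Pic(\PP_{\widehat\Sigma})_\QQ\to\Pic(E)_\QQ$. The paper instead records that the computed relation kernel is zero there and cites Corollary~\ref{cor:onegerm}. Both arguments are valid.
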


\begin{proof}
Each such $X$ satisfies the standing hypotheses of
Theorem~\ref{thm:necessity} by Section~\ref{sec:combsetting}, with the
crepant resolution chosen there: the restriction of a maximal projective
crepant partial desingularization of $\PP_{\Pi}$ that refines the star
subdivision at the interior point of the pentagon, small
over the nodes, and the blowup of the vertex, with exceptional surface
$\dP_7$, at the cone point.
The kernel calculations of Section~\ref{sec:thetest} are
recomputed for each of the $77$ from its vertex list
(Appendix~\ref{app:cert}).  Since no $\dP_6$ point occurs, the only
branch restriction is $\alpha=0$ at the pentagon, and a forced node or
$(-2)$-coordinate is contradicted by Lemma~\ref{lem:P} or
Lemma~\ref{lem:D}.
\end{proof}

The enumeration itself is the admissibility-restricted extract of a
scan reported in the companion paper~\cite{PaperV}.  The input files are
the at-most-nine-vertex part of the complete database copy verified in
\cite[\S6.1]{PaperI}: every entry is reflexive and the normal forms are
pairwise distinct up to lattice equivalence.  The displayed input count
is read from these files.  The germ inventory and
the matrix are recomputed here from each stored vertex list.  The
$67$ single-germ cases are instances of Corollary~\ref{cor:onegerm}.
For these cases and the $9$ cases with one node in addition, the full
matrix has zero relation kernel, so the required relation cannot exist.
Together with the distinguished example of Section~\ref{sec:proofs}, the
census therefore gives seventy-six further nonsmoothable threefolds, none of them
$X^{\circ}$, $X_{20}$ or $X_{19}$, whose polytopes have $26$, $20$
and $19$ vertices and lie outside it.

The enumeration also identifies the single polytope among the $77$ for
which the obstruction vanishes.  The companion paper~\cite{PaperV} proves
that its hypersurface is smoothable.

The census is restricted to polytopes with a pentagonal $2$-face, and that
restriction is not vacuous: admissible polytopes in the same range carry
hexagonal $2$-faces, hence $\dP_6$ cone points.  For example, an eight-vertex
case is obtained from $\Delta_7$ by adjoining $(-3,-2,-1,1)$.  On
such an $X$ Theorem~\ref{thm:necessity} constrains the one-dimensional $\dP_6$
branch and says nothing about the $A_2$ branch, so the conclusions above do not
extend to them and we do not claim them.

\subsection{An obstruction the nodal theory cannot see}\label{sec:X20}
The test also obstructs threefolds that no purely nodal argument can
reach.  Let $\Delta_{20}$ be the $20$-vertex reflexive polytope
\[
\begin{gathered}
\conv\{(1,0,0,0),(0,1,0,0),(0,0,1,0),(0,0,0,1),(1,-1,-1,-1),
(-1,1,1,1),(0,0,0,-1),\\
(0,0,-1,0),(0,-1,0,0),(-1,1,0,1),(0,0,-1,-1),(-1,1,0,0),
(-1,0,1,1),(0,-1,0,-1),\\
(-1,0,1,0),(0,-1,-1,-1),(-1,0,0,1),(-1,0,-1,0),(-1,-1,0,0),
(-1,-1,1,0)\},
\end{gathered}
\]
which appears in the accompanying data of \cite{PaperIII}.  Its generic
anticanonical hypersurface $X_{20}$ has seventeen ordinary double
points and one $\dP_7$-cone point, all isolated; as in
Section~\ref{sec:X0} it satisfies
$\omega_{X_{20}}\cong\cO_{X_{20}}$ and $h^1(\cO_{X_{20}})=0$, and a
smooth projective crepant subdivision exists, obtained as in
Section~\ref{sec:matrix} from the seventeen square diagonals and the
star subdivision at the interior point of the pentagon.  Here
$\Delta_{20}$ has $22$ lattice points and no facet with an interior
point, so Batyrev's formulas give
$h^{1,1}(\Xh_{20})=22-5=17=\operatorname{rank}\Pic$ of the ambient
toric fourfold: the ambient divisor classes span
$H^2(\Xh_{20};\QQ)$, and the ambient kernel of
Section~\ref{sec:kernelcomp} coincides with the topological one rather
than merely containing it, as Proposition~\ref{prop:kernelequal} also
gives.  The outcome is:
\begin{enumerate}
\item the nodal subsystem has rank $11$ of $17$ and \emph{no coloop},
so, the two kernels agreeing, every exceptional curve of a small
resolution does appear with nonzero coefficient in some relation among
them: Friedman's necessary condition is satisfied;
\item the mixed matrix has $19$ rows on $21$ rays, its kernel has
dimension $6$, and \emph{both} $\dP_7$ coordinates vanish identically
on it, the root coordinate among them.
\end{enumerate}
By Theorem~\ref{thm:necessity}(2) and Lemma~\ref{lem:D}, $X_{20}$
admits no one-parameter smoothing.  Here the obstruction is carried
entirely by the divisorial direction, and a reader applying
\cite{Friedman86,RT09} alone would find nothing wrong.  On
$X^{\circ}$, by contrast, the two nodal coloops already signal that
the nodal theory is insufficient.

\subsection{Where the test is silent}\label{sec:X19}
We also ran the test on the anticanonical hypersurface $X_{19}$ of the
$19$-vertex reflexive polytope $\Delta_{19}$, whose generic member has fourteen
nodes and one $\dP_7$-cone point, all isolated.  The present necessity test is silent on this example, which the
companion paper~\cite{PaperV} proves smoothable.  The vertices of
$\Delta_{19}\subset N\cong\ZZ^4$ are
\[
\begin{array}{llll}
(1,0,0,0), & (0,1,0,0), & (0,0,1,0), & (0,0,-1,0),\\
(0,-1,0,0), & (1,-1,-1,0), & (0,0,0,1), & (-1,1,1,-1),\\
(0,-1,0,1), & (0,0,-1,1), & (-1,1,0,-1), & (-1,0,1,-1),\\
(-1,0,1,0), & (-1,1,0,0), & (0,-1,-1,0), & (0,-1,-1,1),\\
(-1,0,0,-1), & (-2,1,1,-1), & (-1,0,0,1). &
\end{array}
\]
It is the polytope of \cite[Section~5.3]{PaperIII}.  The outcome is
structurally opposite to $X^{\circ}$:
\begin{enumerate}
\item the node subsystem has rank $12$ with two coloops: as for
$X^{\circ}$, no purely nodal mechanism can smooth $X_{19}$;
\item on the full kernel (dimension $3$) the complementary
$\dP_7$-coordinate $\alpha$ is already forced to zero, so every
relation class lies on the $(-2)$-line without imposing the branch
restriction; and
\item no coordinate required by Theorem~\ref{thm:necessity} is forced to
vanish: a generic kernel class is
nonzero in all fourteen node coordinates and in the $(-2)$-coordinate
simultaneously.
\end{enumerate}
The necessity theory of this paper is therefore silent on $X_{19}$,
and the companion paper~\cite{PaperV} proves the required mixed
smoothing exists.  As a control in the opposite
direction, the test applied to the generic one-nodal quintic
threefold is silent as well, as it must be: there the exceptional
curve class vanishes in $H_2$ of the small resolution (the threefold
is $\QQ$-factorial), the kernel is the whole line $H_2(L;\QQ)$, and
the node coordinate is realized, the smoothable side of the nodal
picture.  Every smoothing of $X_{19}$ must use the $\dP_7$ direction.
The companion paper~\cite{PaperV} proves its existence by the mixed
deformation criterion and also studies the limitations of deformations
induced by a toric ambient space.

\section{Discussion}\label{sec:discussion}

\subsection{The two-dimensional degree-six smoothing component}\label{sec:open}
Theorem~\ref{thm:necessity} treats $\dP_6$ points smoothed through
the one-dimensional component.  The companion paper~\cite{PaperV}
treats the two-dimensional component by an explicit simultaneous partial
resolution that replaces the central cone by three nodes and is an
isomorphism on every noncentral fiber.  In the marking
\[
R_{A_2}=\left\{\mu_1e_1+\mu_2e_2+\mu_3e_3:
\mu_1+\mu_2+\mu_3=0\right\},
\]
the required condition is $\mu_1\mu_2\mu_3\ne0$.
It is stronger than requiring a nonzero vector in $R_{A_2}$ and holds
for arbitrary smoothing arcs.  None of the proofs in the present
paper require that extension.

\subsection{First-order theory}
The companion paper~\cite{PaperV} identifies the image of the
localization map on first-order deformations with the topological
relation kernel, using the local-to-global comparison of
Friedman--Laza~\cite{FL22a}.  Under
Proposition~\ref{prop:kernelequal}, this is the matrix kernel used here.
The period argument of the present paper gives necessity directly for
an actual smoothing arc and does not assume that its first derivative
smooths each germ.  In the purely nodal case, the criterion for actual
smoothings is already \cite[Thm.~1.3]{RT09}.

\subsection{Sufficiency}\label{sec:sufficiency}
The companion paper~\cite[Thms.~A--B]{PaperV} proves an exact
converse for connected normal projective threefolds with trivial
dualizing sheaf, $H^1(X,\cO_X)=0$, and the prescribed analytic germs:
nodes and anticanonical cones over smooth del Pezzo surfaces of degrees
five, six and seven, or over $\PP^1\times\PP^1$.
For each choice of local components, the relation kernel must avoid
every local discriminant: a nonzero coefficient in each one-dimensional
local summand, three nonzero coefficients in each $A_2$ summand, and five
nonzero conic-pencil pairings at each degree-five point.
Thus silence of the test in the present paper gives the converse only
when it includes all the nonvanishing conditions for the chosen
components.

The proof replaces the degree-six and degree-seven cones by proper
crepant partial models whose full local deformation bases are smooth.
Friedman's generalized $T^1$ lifting theorem, together with marked
local lifting, makes the partial model unobstructed; analytic
blow-down then gives smoothness of the selected base of $X$ whenever
its relation kernel projects nontrivially to every degree-six
summand.  No $\QQ$-factoriality assumption is imposed on $X$, and the
partial model need not be projective.  The criterion also proves the
smoothability stated in Example~\ref{q:X19}.  The separate issue of which
deformations can be induced by a toric ambient family is studied in
\cite{PaperV}.

\subsection{Mirror symmetry}\label{sec:mirror}
For the admissible hypersurfaces of Definition~\ref{def:admissible}, the
companion paper~\cite{PaperVI} studies the relation lattice on the mirror
side.  For each profile in the sense of Section~\ref{sec:matrix} it passes to a
nodal crepant partial resolution induced by a projective toric morphism.  It
identifies the integral relations among the exceptional curves of a small
resolution with the relations among the vanishing three-spheres of suitable
degenerations of the Batyrev mirror family, and it characterizes
smoothability by relations among those spheres.

\appendix

\section{Computational certificates}\label{app:cert}

The exact lattice and matrix claims used in the proofs are reproduced by the
Python programs listed below, in exact rational arithmetic and independently of
any computer-algebra system.  A third self-contained
program combines exact integral checks with a floating-point diagnostic
of the period parametrization, whose underlying identity is proved
exactly in Lemma~\ref{lem:D1}.  The inputs are
Tables~\ref{tab:rays} and~\ref{tab:kperp}, the ancillary subdivision
and restriction data documents, the polytope of
\cite[Section~5.3]{PaperIII}, and the vertex lists of the $77$ polytopes
of Proposition~\ref{prop:census}:
\begin{itemize}
\item the lattice package (\texttt{milnor\_kernels.py}): the boundary structure of the exceptional
surfaces, the norms and pairings of the $K^{\perp}$ rows, the root
decompositions, the $(-2)$-enumerations with rigorous search bounds,
and the isometries to the standard del Pezzo markings;
\item the matrix package (\texttt{global\_kernel.py}): the reconstruction of all $36$ rows from
the printed data, the ranks, coloops, kernels, the four profiles and
the enlarged profile with their forced-zero lists, and the
criticality statement of Section~\ref{sec:kernelcomp}(3);
\item the period package (\texttt{dp\_periods.py}): the exact fan combinatorics behind
\eqref{eq:omegaM} (the $\pm1$ pairings on all rays, the divisor
computation, the slice self-intersection patterns), the corner-chart
transitions, and the constancy of the pullback in
Lemma~\ref{lem:D1}, verified to machine precision at random points;
\item \texttt{examples.py}, the general implementation of
Section~\ref{sec:test}, which recomputes every number of
Section~\ref{sec:kernelcomp} for $X^{\circ}$ from the polytope,
and the $\Delta_{19}$ and $\Delta_{20}$ computations, including the
absence of nodal coloops on $\Delta_{20}$;
\item the one-germ package (\texttt{lone\_germ.py}): the reflexive
$4$-polytope $\Delta_7$ of Section~\ref{sec:onegerm}, its face
inventory and admissibility, the rank-$2$ relation-row block and zero
kernel, and the Hodge numbers $(4,214)$;
\item the census package (\texttt{classification.py}, reading the
extract \texttt{framework\_77.json}): the germ inventory, the matrix and
the verdict for each of the $77$ polytopes of
Proposition~\ref{prop:census}, rebuilt from the stored vertex lists;
\item the relation-row package (\texttt{relation\_class.py}): for
$X^{\circ}$, $X_{20}$, $X_{19}$ and $X_7$, that every relation row
satisfies $\sum_ja_jv_j=0$ and $\sum_ja_j=0$, carries coefficient zero
at the interior lattice point, and that the rows at a germ span the
whole ray-relation space of rank $n-3$
(Proposition~\ref{prop:T1channels}); together with the per-germ block
ranks of Corollary~\ref{cor:defect} and an independent reconstruction of
$\rk B=21$ for $X^{\circ}$;
\item the kernel-equality package (\texttt{kernel\_equality.py}): the
vanishing of both Batyrev correction terms, and the reason each $2$-face
contributes zero, for $\Delta^{\vee}$, $\Delta$, $\Delta_{20}$,
$\Delta_{19}$, $\Delta_7$ and all $77$ census polytopes
(Proposition~\ref{prop:kernelequal});
\item the mirror-partner package (\texttt{mirror\_partner.py}): the
reconstruction of the twenty-four rows of $B_{\Delta}$ from
Tables~\ref{tab:partnerrays} and~\ref{tab:partnersurf}, including the
derivation of both exceptional surface lattices from the face data of
Table~\ref{tab:partnerrays}, the reduced-rigidity of the
$\mathbb F_1$ germ by the zero-sum test on its edge vectors, the two
vanishing columns of item~(3), the root decomposition $A_1\oplus A_2$ of
$K^{\perp}(\dP_6)$, the absence of $(-2)$-classes in
$K^{\perp}(\mathbb F_1)$ and the square $-8$ of its generator, and the
ranks, coloops and forced-zero lists of Section~\ref{sec:partner}.
\end{itemize}
The fixed subdivision of Section~\ref{sec:matrix} and its
smoothness and projectivity are certified by an exact integral
support function; the Batyrev regularity of the reference equation
of $X^{\circ}$, including nondegeneracy along every face, is
certified at an integral witness (\texttt{resolution\_fan.sage} and
\texttt{global\_section.sage}, with five further Sage certificates
and the face-data module \texttt{fixed\_example.py}).  The
subdivision of Section~\ref{sec:partner}, its $94$ unimodular maximal
cones, the two stars and the twenty diagonals are certified the same
way by \texttt{mirror\_partner\_fan.sage}.

The arXiv ancillary bundle includes this directory together with the
shared exact toric modules used by the
programs.  The same files are public, together with the programs and
data of the companion papers, at
\begin{center}
\url{https://github.com/bwuebben/calabi-yau-smoothability}\,.
\end{center}

\bigskip
\noindent{\sc Bernd Johannes Wuebben, New York, NY,}
\texttt{wuebben@gmail.com}

\end{document}